\newtheorem{theorem}{Theorem}
\newtheorem{lemma}{Lemma}
\newtheorem{conjecture}{Conjecture}
\title{Recoloring via modular decomposition}
\author{Manoj Belavadi
 \thanks{Department of Mathematics, Wilfrid Laurier University,
 Waterloo, ON, Canada, N2L 3C5. Email: \texttt{mbelavadi@wlu.ca}. ORCID: 0000-0002-3153-2339. Research supported by the Natural Sciences and Engineering Research Council of Canada (NSERC) grant RGPIN-2016-06517.}
 \and Kathie Cameron
 \thanks{Department of Mathematics, Wilfrid Laurier University,
 Waterloo, ON, Canada, N2L 3C5. Email: \texttt{kcameron@wlu.ca}. ORCID: 0000-0002-0112-2494. Research supported by the Natural Sciences and Engineering Research Council of Canada (NSERC) grant RGPIN-2016-06517.}
 \and Ni Luh Dewi Sintiari
 \thanks{Department of Informatics, Universitas Pendidikan Ganesha,
 Bali, Indonesia, 81116. Email: \texttt{nld.sintiari@gmail.com}. ORCID: 0000-0002-6562-4434. Research supported by the Natural Sciences and Engineering Research Council of Canada (NSERC) grant RGPIN-2016-06517.}}
\begin{document}

\maketitle

\begin{abstract}
    The reconfiguration graph of the $k$-colorings of a graph $G$, denoted $R_{k}(G)$, is the graph whose vertices are the $k$-colorings of $G$ and two colorings are adjacent in $R_{k}(G)$ if they differ in color on exactly one vertex. A graph $G$ is said to be recolorable if $R_{\ell}(G)$ is connected for all $\ell \geq \chi(G)$+1. We demonstrate how to use the modular decomposition of a graph class to prove that the graphs in the class are recolorable. In particular, we prove that every ($P_5$, diamond)-free graph, every ($P_5$, house, bull)-free graph, and every ($P_5$, $C_5$, co-fork)-free graph is recolorable.

     A graph is prime if it cannot be decomposed by modular decomposition except into single vertices. For a prime graph $H$, we study the complexity of deciding if $H$ is $k$-colorable and the complexity of deciding if there exists a path between two given $k$-colorings in $R_{k}(H)$. Suppose $\mathcal{G}$ is a hereditary class of graphs. We prove that if every blowup of every prime graph in $\mathcal{G}$ is recolorable, then every graph in $\mathcal{G}$ is recolorable.\\
\\
\textbf{Keywords}: reconfiguration graph, $k$-mixing, modular decomposition, $P_5$-free graphs.
\end{abstract}

\section{Introduction}

Let $G$ be a finite simple graph with vertex-set $V(G)$ and edge-set $E(G)$. We assume that $G$ is connected unless stated otherwise. We use $n$ to represent the number of vertices in a graph. Two vertices $u$ and $v$ are \emph{adjacent} in $G$ if $uv\in E(G)$. For a positive integer $k$, a \emph{$k$-coloring} of $G$ is a mapping from $V(G)$ to a set of colors $\{1,2,\dots,k\}$ such that no two adjacent vertices receive the same color. We say that $G$ is \emph{$k$-colorable} if it admits a $k$-coloring, and the \emph{chromatic number} of $G$, denoted $\chi(G)$, is the minimum number of colors required to color $G$. We say $\chi$-coloring of $G$ instead of $\chi(G)$-coloring of $G$, when appropriate. The \emph{reconfiguration graph of the $k$-colorings}, denoted $R_k(G)$, is the graph whose vertices are the $k$-colorings of $G$ and two colorings are adjacent in $R_k(G)$ if they differ in color on exactly one vertex. A graph $G$ is said to be \emph{$k$-mixing} if $R_{k}(G)$ is connected. Given a reconfiguration graph, we can ask the following questions: Is the reconfiguration graph connected? If so, what is the diameter of the reconfiguration graph? In this paper we focus on the connectivity of the reconfiguration graph of the $k$-colorings. A graph is said to be \emph{$k$-mixing} if $R_{k}(G)$ is connected. We say a graph $G$ is \emph{recolorable} if $G$ is $\ell$-mixing for all $\ell\geq \chi(G)$+1. 

Deciding whether there exists a path between two colorings in $R_k(G)$ is PSPACE-complete for $k > 3$ \cite{Bonsma2009}, and can be decided in polynomial time for $k\leq 3$ \cite{Cereceda2008}. In 2018, Wrochna proved that the problem remains PSPACE-complete for graph classes with bounded bandwidth, and hence for graph classes with bounded treewidth \cite{Wrochna}. This led researchers to study the problem for various restricted graph classes. In \cite{bonamy2014}, Bonamy et. al. introduced the class of $k$-color-dense graphs and proved that every $k$-color-dense graph is recolorable. The class of $k$-color-dense graphs includes the class of $k$-colorable chordal graphs. For a graph $H$, a graph $G$ is $H$-$free$ if no induced subgraph of $G$ is isomorphic to $H$. In \cite{Belavadi}, it was proved that every $H$-free graph $G$ is recolorable if and only if $H$ is an induced subgraph of $P_4$ or $P_3$+$P_1$. For a collection of graphs $\mathcal{H}$, $G$ is $\mathcal{H}$-free if $G$ is $H$-free for every $H \in \mathcal{H}$. For more results about reconfiguration of vertex colorings in forbidden induced subgraph classes see \cite{Belavadi2024}.

Bonamy and Bousquet \cite{Bonamy} used the tree decomposition of graphs to show that every graph $G$ is ($tw(G)$+2)-mixing, where $tw(G)$ denotes the treewidth of the graph $G$. In this paper we demonstrate how to use the modular decomposition of a graph $G$ to prove that $G$ is $\ell$-mixing for $\ell\ge \chi(G)$+1. 

A non-empty set $S\subseteq V(G)$ is a \emph{module} if every vertex in $V(G)\setminus S$ is either adjacent to every vertex of $S$ or no vertex of $S$. A module $S\subset V(G)$ is said to be \emph{non-trivial} if $S$ contains at least two vertices. Note that for each $v\in V(G)$, $\{v\}$ is a trivial module in $G$. A graph $G$ is \emph{prime} if it does not contain any non-trivial module.

By \emph{substituting a graph} $H$ for $S\subseteq V(G)$ we mean taking the graph $G$-$S$ and adding an edge between every vertex of $H$ and every vertex of $G$-$S$ that is adjacent to some vertex of $S$ in $G$. Note that this is slightly different from the well-known definition of substituting a graph for a vertex. However, substituting a graph $H$ for $S\subseteq V(G)$ is equivalent to contracting the set $S$ to a vertex $v$ (and removing any loops and multiple edges created), and then applying the usual definition of substituting the graph $H$ for the vertex $v$. A \emph{blowup} of a graph $G$ is a graph obtained by substituting non-empty cliques for some vertices of $G$.

\begin{figure}[h]
\centering
\subfloat[$diamond$]{
\begin{tikzpicture}[scale=0.75]
\tikzstyle{vertex}=[circle, draw, fill=black, inner sep=0pt, minimum size=4pt]
\node[vertex] (1) at (0,0) {};
\node[vertex] (2) at (-1,1) {};
\node[vertex] (3) at (1,1) {};
\node[vertex] (4) at (0,2) {};
\draw (1)--(2); \draw (1)--(3); \draw (2)--(3); \draw (4)--(2); \draw (4)--(3);
\end{tikzpicture}
} \hspace{1.5cm}
\subfloat[$house$]{
\begin{tikzpicture}[scale=0.75]
\tikzstyle{vertex}=[circle, draw, fill=black, inner sep=0pt, minimum size=4pt]
\node[vertex] (1) at (0,0) {};
\node[vertex] (2) at (0,1) {};
\node[vertex] (3) at (1,1) {};
\node[vertex] (4) at (1,0) {};
\node[vertex] (5) at (0.5,2) {};
\draw (2)--(1); \draw (1)--(4); \draw (3)--(4); \draw (2)--(3); \draw (5)--(3); \draw (2)--(5);
\end{tikzpicture}
} \hspace{1.5cm}
\subfloat[$bull$]{
\begin{tikzpicture}[scale=0.75]
\tikzstyle{vertex}=[circle, draw, fill=black, inner sep=0pt, minimum size=4pt]
\node[vertex] (1) at (0,0) {};
\node[vertex] (2) at (0,1) {};
\node[vertex] (3) at (1,1) {};
\node[vertex] (4) at (1,0) {};
\node[vertex] (5) at (0.5,-1) {};
\draw (2)--(1); \draw (1)--(4); \draw (3)--(4); \draw (5)--(1); \draw (4)--(5);
\end{tikzpicture}
} \hspace{1.5cm}
\subfloat[$co$-$fork$]{
\begin{tikzpicture}[scale=0.75]
\tikzstyle{vertex}=[circle, draw, fill=black, inner sep=0pt, minimum size=4pt]
\node[vertex] (1) at (0,0) {};
\node[vertex] (2) at (1,0.5) {};
\node[vertex] (3) at (1,-0.5) {};
\node[vertex] (4) at (2,0) {};
\node[vertex] (5) at (3,0) {};
\draw (2)--(1); \draw (1)--(3); \draw (3)--(4); \draw (2)--(3); \draw (2)--(4); \draw (4)--(5);
\end{tikzpicture}
}
\caption{}\label{fig:u5}
\end{figure}

A class of graphs $\mathcal{G}$ is said to be \emph{hereditary} if for every $G$ in $\mathcal{G}$, every induced subgraph of $G$ is also in $\mathcal{G}$. Note that for a collection of graphs $\mathcal{H}$, the class of $\mathcal{H}$-free graphs is hereditary. In Section \ref{sec:comp}, for a prime graph $H$, we study the complexity of deciding if $H$ is $k$-colorable and the complexity of deciding if there exists a path between two given $k$-colorings in $R_{k}(H)$.

\begin{theorem}\label{thm:primecoloring}
    Given a prime graph $H$, for all $k\ge 3$, deciding whether $H$ is $k$-colorable is NP-complete.
\end{theorem}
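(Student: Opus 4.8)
The plan is to prove NP-completeness by a polynomial reduction from the classical $k$-colorability problem on arbitrary graphs, which is NP-complete for every fixed $k\ge 3$. Membership in NP is immediate, since a $k$-coloring of $H$ is a polynomial-size certificate that can be verified in polynomial time. For hardness, given an arbitrary graph $G$ with $V(G)=\{v_1,\dots,v_n\}$ (the cases $n\le 1$ being handled by outputting a fixed small prime graph such as $K_1$), I would build a graph $H$ from $G$ by attaching a small ``prime-ifying'' gadget: for each $i$ add a pendant vertex $w_i$ adjacent only to $v_i$, and then add one further vertex $z$ adjacent to $w_1,\dots,w_n$ and to nothing else. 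This $H$ has $2n+1$ vertices, is computable in polynomial time, and is connected.

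The first point to check is that $H$ is prime. I would take a module $S$ of $H$ with $|S|\ge 2$ and show $S=V(H)$, by a short case analysis using the facts that $N_H(w_i)=\{v_i,z\}$ and $N_H(z)=\{w_1,\dots,w_n\}$. If $S$ contains two of the $v_i$'s, some pendant $w_i\notin S$ is adjacent to one but not the other; if $S$ contains two of the $w_i$'s, some $v_i\notin S$ is adjacent to one but not the other; if $z\in S$ one shows all $w_i$, and then all $v_i$, must lie in $S$ as well; and if $z\notin S$ then $z$'s adjacency forces $S$ to be contained in $V(G)$ or in $\{w_1,\dots,w_n\}$, reducing to the first two cases. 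Running through these possibilities shows the only module of size at least $2$ is $V(H)$, so $H$ is prime.

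Next I would verify that, for the fixed $k\ge 3$, $H$ is $k$-colorable if and only if $G$ is. One direction is trivial because $G$ is an induced subgraph of $H$. For the converse, given a $k$-coloring $c$ of $G$, extend it by assigning to each $w_i$ a color in $\{1,2\}\setminus\{c(v_i)\}$ (a nonempty set) and assigning color $3$ to $z$; this is a proper coloring using only the colors $\{1,\dots,k\}$, where the hypothesis $k\ge 3$ is exactly what is needed. Hence $G$ is $k$-colorable iff the prime graph $H$ is, and the reduction is complete.

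I expect the only delicate part of the argument to be the primality verification: one has to make sure that no small module survives — in particular $\{v_i,w_i\}$ (which would be a module if $v_i$ were isolated in $G$), a pair of pendants, or, in a pendants-only construction, a set of the form $C\cup\{w_i:v_i\in C\}$ for a component $C$ of $G$. The apex vertex $z$ is included precisely to destroy these last modules and to keep $H$ connected regardless of the structure of $G$; this is the reason the gadget uses $z$ rather than pendants alone. Everything else in the proof is routine bookkeeping.
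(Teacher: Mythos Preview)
Your proposal is correct and follows essentially the same strategy as the paper: reduce $k$-colorability to the prime case by attaching a small gadget that kills all non-trivial modules without affecting $k$-colorability for $k\ge 3$. The paper's gadget is marginally simpler---it reduces from \emph{connected} graphs and attaches only a pendant $x'$ to each vertex $x$ (the ``sibling'' of $G$, shown prime in Lemma~\ref{Hprime})---whereas your extra apex vertex $z$ lets you start from an arbitrary $G$; both arguments are sound.
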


\begin{theorem}\label{thm:primerecoloring}
    Given a prime graph $H$, for all $k\ge 4$, deciding whether there exists a path between two given colorings in $R_{k}(H)$ is PSPACE-complete.
\end{theorem}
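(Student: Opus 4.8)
The plan is to reduce from the coloring reconfiguration problem on arbitrary graphs, which by \cite{Bonsma2009} is PSPACE-complete for every fixed $k\ge 4$; membership of our problem in PSPACE is immediate from the general case, so it suffices to prove PSPACE-hardness. I would first note that we may assume the source instance $(G,\alpha,\beta)$ has $G$ connected with $|V(G)|\ge 2$ (the standard hardness constructions produce such graphs; in any case this is harmless for the reduction). Write $V(G)=\{v_1,\dots,v_n\}$. Construct $H$ by attaching to each $v_i$ a fresh pendant vertex $w_i$, so that $N_H(w_i)=\{v_i\}$, and let $\hat\alpha$, $\hat\beta$ be the fixed extensions of $\alpha$, $\beta$ that give each $w_i$ the least color different from $\alpha(v_i)$, resp.\ $\beta(v_i)$. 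This is computable in polynomial time and $|V(H)|=2n\ge 4$.

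The first thing to verify is that $H$ is prime. I would argue as follows. If $S$ is a module of $H$ with $|S|\ge 2$ and $v_i\in S$ but $w_i\notin S$, then $w_i$ (being adjacent to $v_i\in S$) must be adjacent to all of $S$, forcing $S\subseteq N_H(w_i)\cup\{v_i\}=\{v_i\}$, a contradiction; hence every module containing some $v_i$ also contains the corresponding $w_i$. Now suppose $S$ is a proper module with $v_i\in S$; since $w_i\in S$, no neighbor of $v_i$ in $G$ can lie outside $S$ (such a vertex would be adjacent to $v_i\in S$ but not to $w_i\in S$), so $N_G(v_i)\subseteq S$, and iterating this along the edges of the connected graph $G$ gives $V(G)\subseteq S$, hence $S=V(H)$ — not a proper module. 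Finally, a module contained in $\{w_1,\dots,w_n\}$ with two elements $w_i,w_j$ is impossible, since $v_i$ is adjacent to $w_i$ but not to $w_j$. Thus the only proper modules of $H$ are singletons, i.e.\ $H$ is prime.

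Next I would establish that there is a path from $\hat\alpha$ to $\hat\beta$ in $R_k(H)$ if and only if there is a path from $\alpha$ to $\beta$ in $R_k(G)$. For the ``only if'' direction, project a given path in $R_k(H)$ onto $G$ by discarding the steps recoloring some $w_i$ and keeping the steps recoloring some $v_i$; because $N_G(v_i)\subseteq N_H(v_i)$, each kept step is a legal move of $R_k(G)$, and suppressing the now-trivial steps yields a walk from $\alpha$ to $\beta$ in $R_k(G)$. For the ``if'' direction, simulate a path of $R_k(G)$ inside $H$: to execute a move $v_i\colon a\to b$, first (if $w_i$ currently has color $b$) recolor $w_i$ to any color outside $\{a,b\}$ — legal since $k\ge 3$ and $w_i$'s only neighbor $v_i$ has color $a$ — then recolor $v_i$ to $b$; when the simulation reaches $\beta$ on $V(G)$, recolor each $w_i$ in one step to $\hat\beta(w_i)$, which is legal since $\hat\beta(w_i)\neq\hat\beta(v_i)=\beta(v_i)$. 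Together with the (harmless) starting equality $\hat\alpha|_G=\alpha$, this gives the equivalence, completing the reduction and hence the proof.

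The reconfiguration correspondence is routine; the step requiring the most care is the primality verification — in particular the propagation argument ruling out large modules, which is exactly where connectedness of $G$ is used, and the reason we insist on a connected source instance.
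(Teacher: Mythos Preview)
Your proposal is correct and essentially identical to the paper's proof: the pendant-vertex construction you describe is precisely what the paper calls the \emph{sibling} of $G$, your primality argument matches the paper's Lemma~\ref{Hprime}, and your reconfiguration correspondence (project to $G$ in one direction; simulate in $H$ by first moving the pendant out of the way in the other) is the same as the paper's. The only cosmetic difference is that you inline the primality proof rather than separating it into a lemma, and you fix a specific rule (least available color) for extending the colorings to the pendants, whereas the paper uses an arbitrary one.
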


In Section \ref{modulardecomposition}, we will demonstrate how to use modular decomposition in reconfiguration of vertex coloring.

\begin{theorem}\label{hereditary}
    Let $\mathcal{G}$ be a hereditary class of graphs. If every blowup of every prime graph in $\mathcal{G}$ is recolorable, then every graph in $\mathcal{G}$ is recolorable.
\end{theorem}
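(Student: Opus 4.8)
The plan is to induct on $|V(G)|$, using the modular decomposition (Gallai) structure theorem to split at the root of the decomposition tree, and to concentrate all the real work in a single substitution lemma. If $|V(G)|=1$ then $G$ is trivially recolorable. So assume $|V(G)|\ge 2$ and that every graph in $\mathcal G$ with fewer vertices is recolorable. Exactly one of the following holds: (i) $G$ is disconnected; (ii) $\overline G$ is disconnected; (iii) $G$ and $\overline G$ are both connected, in which case the quotient $H$ of $G$ by its maximal strong modules $M_1,\dots,M_t$ is prime with $t\ge 4$. In case (i) write $G$ as the disjoint union of its components: each is in $\mathcal G$ and smaller, hence recolorable by induction, and I will record the routine fact that a disjoint union of recolorable graphs is recolorable (for $\ell\ge\chi(G)+1$ the graph $R_\ell(G)$ is a Cartesian product of connected graphs). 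In cases (ii) and (iii), $G$ is obtained by substituting $G[M_1],\dots,G[M_t]$ into a graph $H$, where $H=K_t$ in case (ii) and $H$ is prime on $t\ge 4$ vertices in case (iii); each $G[M_i]$ is an induced subgraph of $G$, hence in $\mathcal G$, smaller, so recolorable by induction, and $t\ge 2$ guarantees $|M_i|<|V(G)|$. Moreover $H$ is isomorphic to an induced subgraph of $G$ (choose one vertex from each $M_i$), so $H\in\mathcal G$; in case (iii) $H$ is prime, so by hypothesis every blowup of $H$ is recolorable, while in case (ii) every blowup of $K_t$ is a complete graph, which is recolorable by a classical fact. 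Thus both remaining cases follow from:

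\emph{Substitution Lemma.} If every blowup of $H$ is recolorable and $G$ is obtained by substituting recolorable graphs $G_v$ ($v\in V(H)$) into $H$, then $G$ is recolorable. To prove it, fix $\ell\ge\chi(G)+1$ and an $\ell$-coloring $\alpha$ of $G$; it suffices to recolor $\alpha$ to a single canonical coloring $\alpha_0$. For $v\in V(H)$ let $P_\alpha(v)$ denote the set of colors $\alpha$ uses on $G_v$; these sets are pairwise disjoint across edges of $H$, and $|P_\alpha(v)|\ge\chi(G_v)$. \emph{Phase 1:} for each $v$ with $|P_\alpha(v)|>\chi(G_v)$, recolor $G_v$ within its current palette to use exactly $\chi(G_v)$ colors; this is legitimate since $R_{|P_\alpha(v)|}(G_v)$ is connected ($G_v$ is recolorable) and such moves never conflict with anything outside $G_v$. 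Let $B$ be the blowup of $H$ in which each $v$ is replaced by a clique on $\chi(G_v)$ vertices; then $\chi(B)=\chi(G)$, so $R_\ell(B)$ is connected. Both the current palette assignment and the canonical one $\big(S_v^0\big)_v$ (coming from a fixed optimal coloring of $B$) are $\ell$-colorings of $B$; take a path between them in $R_\ell(B)$ and lift it step by step into $G$: a step of $R_\ell(B)$ that recolors a vertex of the clique replacing $v$ from color $a$ to color $b$ is realized in $G$ by recoloring, one vertex at a time, every vertex of $G_v$ currently colored $a$ to color $b$ — these vertices form an independent set, $b$ is used neither elsewhere in $G_v$ nor by any neighbor of $G_v$, so each single recoloring is proper. \emph{Phase 3:} $\alpha$ has now been brought to a coloring with palette assignment $(S_v^0)_v$; process the vertices $v$ of $H$ one at a time, recoloring $G_v$ from its current coloring to the canonical one $\alpha_0|_{G_v}$ using as scratch a single color larger than $\chi(G)$ (hence currently unused everywhere, since $\ell\ge\chi(G)+1$); because $R_{\chi(G_v)+1}(G_v)$ is connected this is possible, and afterwards that scratch color is free again for the next module. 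The result is $\alpha_0$, which finishes the lemma and the theorem.

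The genuinely delicate point is the Substitution Lemma, and inside it the fact that a module using exactly $\chi(G_v)$ colors cannot in general be recolored at all within its own palette, since $R_{\chi(G_v)}(G_v)$ may be disconnected; this is exactly why the canonical coloring must be reached in three phases and why a globally spare color — guaranteed by $\ell\ge\chi(G)+1=\chi(B)+1$ — has to be shuttled from module to module in Phase 3. One must also verify carefully that lifting a recoloring path from $B$ to $G$ keeps each intermediate coloring proper and preserves, at each checkpoint, the invariant that $G_v$ uses exactly the color set of the clique replacing $v$; both follow once one uses that a color class is independent. The remaining ingredients are routine: a disjoint union of recolorable graphs is recolorable; complete graphs are recolorable; the quotient of a modular decomposition is realized as an induced subgraph via representatives; and $\chi\big(H[K_{\chi(G_v)}]\big)=\chi\big(H(G_v)\big)$.
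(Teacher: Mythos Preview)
Your proof is correct and follows essentially the same approach as the paper: both use the modular-decomposition trichotomy, pass to the clique skeleton (your $B$, the paper's $G(Q_1,\dots,Q_m)$), lift a recoloring path in $R_\ell(B)$ back to $R_\ell(G)$ by the same color-class-swapping argument, and finish inside each module with a single globally spare color. The only difference is organizational---you fold the join case and the prime-quotient case into one Substitution Lemma, while the paper disposes of joins via a cited lemma and isolates the clique-skeleton equivalence as a separate theorem (its Theorem~\ref{moddecomposition}).
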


 We use the above result in Section \ref{p5free} along with the modular decomposition of several subclasses of $P_5$-free graphs to prove the following. (See Figure \ref{fig:u5} for graphs mentioned in Theorems \ref{p5diamond} and \ref{PHB}.)
 
\begin{theorem}\label{p5diamond}
    Every ($P_5$, diamond)-free graph is recolorable.
\end{theorem}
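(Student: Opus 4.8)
The plan is to apply Theorem \ref{hereditary}: since the class of $(P_5, \text{diamond})$-free graphs is hereditary, it suffices to show that every blowup of every prime $(P_5,\text{diamond})$-free graph is recolorable. The first step is therefore to understand the structure of prime $(P_5,\text{diamond})$-free graphs. I would invoke the known structural characterization of $(P_5,\text{diamond})$-free graphs (or at least of the prime ones): diamond-free graphs are precisely the graphs in which every edge lies in a unique maximal clique, and combining this with $P_5$-freeness should force the prime members to be highly restricted — I expect them to be a short, explicit list, plausibly induced subgraphs of $C_5$ together with a few small sporadic graphs, or graphs with a dominating clique/vertex structure. Pinning down this list precisely is the part I would do carefully, citing the relevant classification from the literature.

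Once the prime graphs are identified, the second step is to observe what a blowup of such a graph looks like: substituting cliques for vertices of a prime $(P_5,\text{diamond})$-free graph $H$ keeps us inside a controlled class, and in fact a blowup of a small graph is itself a small-diameter, structurally simple graph — for instance, a blowup of $C_5$ is a graph on five cliques arranged cyclically, and blowups of the sporadic graphs are even more constrained. The third step is to prove recolorability of each such blowup. For the degenerate cases (blowups of cliques, of $P_3$, of $P_4$, etc.) recolorability is already known or follows from $P_4$-free / chordal arguments in the cited literature. The genuinely new case is the blowup of $C_5$ (and possibly one or two other sporadic primes), where I would argue directly: given $\ell \ge \chi(G)+1$ colors, I would show that from any $\ell$-coloring one can reach a canonical coloring by a sequence of single-vertex recolorings, exploiting the fact that each of the five clique-blobs is a module so colors can be permuted within and across blobs using the spare color, much in the spirit of standard "Kempe-like" / first-available recoloring arguments.

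The main obstacle I anticipate is the $C_5$-blowup case when $\ell = \chi(G)+1$ and $\chi(G)$ is forced by a large clique inside one blob rather than by the odd cycle itself: here the slack is minimal and one must be careful that recoloring one blob does not create an obstruction elsewhere around the cycle. I would handle this by first moving to a coloring in which one blob uses the fewest possible colors (freeing up a color globally), then propagating the fix around the cycle; a potential-function or induction-on-number-of-disagreements argument with a fixed target coloring should close it. A secondary, more bookkeeping-style obstacle is making sure the structural list of prime graphs is complete — an omission there would leave a gap — so I would double-check it against the diamond-free "unique maximal clique" property directly rather than relying solely on a citation.
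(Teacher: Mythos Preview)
Your plan has a genuine gap at the structural step. Prime $(P_5,\text{diamond})$-free graphs do \emph{not} form a short finite list: by Brandst\"adt's theorem (Theorem~\ref{structure}), a prime $(P_5,\text{diamond})$-free graph that contains an induced $2K_2$ is a matched co-bipartite graph, and these exist on arbitrarily many vertices; the remaining primes are exactly the prime $(2K_2,\text{diamond})$-free graphs, again an infinite family. So the strategy of enumerating finitely many sporadic primes and then handling their blowups one by one cannot succeed, and the expectation that the substantive case is a blowup of $C_5$ is off the mark ($C_5$ is in fact $2K_2$-free and is just one small instance among infinitely many).

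There is a second, related problem in your choice of reduction. You invoke Theorem~\ref{hereditary}, which forces you to prove that every \emph{blowup} of every prime in the class is recolorable. For the matched co-bipartite primes this would be fine (they are $3K_1$-free, and blowups preserve $3K_1$-freeness), but for the $(2K_2,\text{diamond})$-free primes it is not: already for $P_4$, blowing up an internal vertex creates a diamond and blowing up the two endpoints creates an induced $2K_2$, so the blowup leaves both defining classes and Theorem~\ref{2K2dia-free} no longer applies to it. The paper avoids this difficulty by using Theorem~\ref{b2k2} in place of Theorem~\ref{hereditary}: since the whole class is diamond-free, it is enough that the primes \emph{themselves} are recolorable, with no blowups required. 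The argument then closes immediately --- Theorem~\ref{2K2dia-free} handles the $2K_2$-free primes, and the matched co-bipartite primes are $3K_1$-free and hence recolorable by Lemma~\ref{blow3K1-free}.
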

\begin{theorem}\label{PHB}
    Every ($P_5$, house, bull)-free graph is recolorable.
\end{theorem}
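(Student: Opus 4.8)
The plan is to apply Theorem~\ref{hereditary}. The class $\mathcal{G}$ of ($P_5$, house, bull)-free graphs is hereditary, so it suffices to prove that every blowup of every prime graph in $\mathcal{G}$ is recolorable. As an orienting remark, each of $P_5$, the house and the bull is prime and has at least three vertices, so (by the standard behaviour of prime induced subgraphs under substitution) substituting cliques for vertices of a ($P_5$, house, bull)-free graph creates none of them; thus the blowups we must handle are themselves ($P_5$, house, bull)-free, and the task is to recolor a restricted subfamily of $\mathcal{G}$, namely the clique-blowups of its prime members.

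First I would establish the structure of the prime graphs in $\mathcal{G}$: every such graph is either isomorphic to $C_5$ or is perfect. Since $\overline{C_5}\cong C_5$, every $C_k$ with $k\ge 6$ contains an induced $P_5$, and every $\overline{C_k}$ with $k\ge 7$ contains an induced house, a ($P_5$, house)-free graph with no induced $C_5$ is Berge and hence perfect by the Strong Perfect Graph Theorem; and if a prime graph in $\mathcal{G}$ contains an induced $C_5$, a short analysis of the possible neighbourhoods of an additional vertex (each possibility creates a $P_5$, a house or a bull, or makes the new vertex a twin of an existing one) shows the graph is exactly $C_5$. For the perfect prime members I would then describe the shape explicitly by a twin/neighbourhood argument that uses primeness together with the three excluded subgraphs: they are ``half-graph-like'' --- the vertex set partitions into two parts, each inducing a disjoint union of cliques (often a clique and a stable set), with a nested bipartite adjacency between the parts.

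The last step is to check recolorability of the clique-blowups of each type. When the prime graph is of ``clique $+$ stable set'' (split) type, its clique-blowup is still chordal --- blowing up cliques of a split graph cannot create an induced cycle of length at least four --- and chordal graphs are $k$-color-dense for every $k$, hence recolorable by~\cite{bonamy2014}. For the remaining perfect prime graphs (for example the bipartite half-graphs, whose blowups may contain induced $C_4$'s and so need not be chordal) I would argue directly by induction on the number of ``layers'': once $\ell\ge\chi+1$ one may recolor freely inside each clique of the partition, and the nested adjacency lets one fix the colours on an outermost layer and pass to a strictly smaller instance.

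The hard case, which I expect to be the main obstacle, is the clique-blowup of $C_5$: a graph $G\cong C_5[K_{a_1},\dots,K_{a_5}]$ is imperfect and not chordal, so no off-the-shelf result applies. Here the useful facts are that any two consecutive cliques $B_i\cup B_{i+1}$ induce a clique of $G$, of size at most $\omega(G)\le\chi(G)<\ell$, so some colour is always missing from such a union, and that every independent set of $G$ meets at most two of the five cliques. Using the former as slack, I would ``rotate'' colours around the pentagon to bring an arbitrary $\ell$-coloring of $G$ to a fixed canonical coloring; making this cyclic shift precise --- transporting the surplus colour all the way around the $C_5$ while keeping every intermediate coloring proper, and checking that no ``blocking'' configuration on the cycle can occur once $\ell\ge\chi(G)+1$ --- is the technical heart of the argument.
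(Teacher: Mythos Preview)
Your opening move via Theorem~\ref{hereditary} matches the paper exactly. The divergence is in how you analyse the prime graphs, and there you both leave gaps and make life much harder than necessary.

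Your ``hard case'' is in fact trivial: $\alpha(C_5)=2$, so $C_5$ and every clique blowup of it is $3K_1$-free, hence recolorable by~\cite{3k1free} (this is Lemma~\ref{blow3K1-free}). No pentagon-rotation argument is needed, and you have not actually carried one out --- you only announce that making it precise ``is the technical heart''. The paper disposes of $C_5$ (and every other prime on at most five vertices) in one stroke via Lemma~\ref{5vertex}.

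For primes on six or more vertices, the paper does not stop at ``perfect'' but invokes Fouquet's structure theorem (Theorem~\ref{strPHB}): every prime ($P_5$, house, bull)-free graph on at least six vertices without a universal vertex is bipartite or co-bipartite. This is much sharper than your dichotomy and replaces the vague ``half-graph-like'' description with two concrete cases. Co-bipartite blowups are again $3K_1$-free, hence recolorable (Lemma~\ref{blowupco-bip}) --- a case you never isolate. For the bipartite primes, Lemma~\ref{p5bip} gives precisely the nested half-graph structure you allude to; the key observation is that such a graph has a vertex of degree~$1$, so any blowup has a tight clique cutset, and Lemma~\ref{tight} closes the induction (Lemma~\ref{blowupbip}). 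Your ``induction on layers'' is morally the same mechanism, but it is only gestured at, and your split-graph subcase covers strictly fewer graphs than you need.

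In short: the skeleton of your argument is right, but the structural input you use is weaker than Fouquet's, the perfect case is not pinned down, and the effort is misallocated to a case that dissolves in one line.
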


Note that there exist $P_5$-free graphs that are not recolorable \cite{2K2free}. A graph $G$ is said to be \emph{$P_4$-sparse} if every set of five vertices in $G$ induces at most one $P_4$. The class of $P_4$-sparse graphs generalizes the class of $P_4$-free graphs. Jamison and Olariu \cite{Jamison} gave a forbidden induced subgraph characterization for $P_4$-sparse graphs: A graph $G$ is $P_4$-sparse if and only if $G$ is ($P_5$, $C_5$, house, fork, co-fork, banner, co-banner)-free. In 1997, Fouquet and Giakoumakis \cite{Fouquet2} introduced the class of semi-$P_4$-sparse graphs which is a generalization of the class of $P_4$-sparse graphs. A graph $G$ is \emph{semi-$P_4$-sparse} if $G$ is ($P_5$, $C_5$, co-fork)-free. We prove the following.

\begin{theorem}\label{PHK}
    Every semi-$P_4$-sparse graph is recolorable.
\end{theorem}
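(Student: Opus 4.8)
The plan is to apply Theorem~\ref{hereditary}. Since the class of semi-$P_4$-sparse graphs is defined by forbidden induced subgraphs it is hereditary, so it suffices to prove that every blowup of every prime semi-$P_4$-sparse graph is recolorable. I would start from the structure of these prime graphs, as analysed by Jamison and Olariu for $P_4$-sparse graphs and by Fouquet and Giakoumakis~\cite{Fouquet2} in general: every prime semi-$P_4$-sparse graph $H$ is either a spider $H(S,K,R)$ — the vertex set split into an independent set $S$, a clique $K$ with $|S|=|K|\ge 2$ joined to $S$ by a perfect matching or its bipartite complement, and a set $R$ complete to $K$ and anticomplete to $S$, where $R$ being a module forces $|R|\le 1$ since $H$ is prime — or a graph with $\alpha(H)\le 2$, that is, the complement of a prime triangle-free graph, which is moreover $C_5$-free and fork-free because $H$ is $(C_5,\text{co-fork})$-free. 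So there are two cases: blowups of spiders, and blowups of graphs with independence number at most two.

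The spider case is routine. A spider has a perfect elimination ordering that first deletes the vertices of $S$ — each $s_i$ has the clique neighbourhood $\{k_i\}$ in a thin spider and $K\setminus\{k_i\}$ in a thick spider — then $R$, then $K$; hence spiders are chordal. The class of chordal graphs is closed under substituting cliques for vertices, so every blowup of a spider is chordal, and a $k$-colorable chordal graph is $k$-color-dense for every $k\ge\chi$, so such a blowup is recolorable by the theorem of Bonamy et al.~\cite{bonamy2014}.

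For the second case, observe that substituting cliques never increases the independence number, so a blowup $G$ of a graph $H$ with $\alpha(H)\le 2$ again has $\alpha(G)\le 2$, with $\overline G$ triangle-free (and still $C_5$-free and fork-free, and $G$ itself $(P_5,C_5)$-free). The useful reformulation is that an $\ell$-coloring of $G$ is a partition of $V(G)$ into at most $\ell$ cliques of $\overline G$; since $\overline G$ is triangle-free, each clique is an edge or a vertex, so an $\ell$-coloring corresponds to a matching of $\overline G$ of size at least $|V(G)|-\ell$, and a single recoloring step of $G$ performs a local modification of this matching. In particular $\chi(G)=|V(G)|-\nu(\overline G)$, where $\nu$ is the matching number, and $R_\ell(G)$ being connected for $\ell\ge\chi(G)+1$ amounts to the reconfiguration graph of matchings of $\overline G$ of size at least $\nu(\overline G)-1$, under the moves corresponding to single recolorings, being connected. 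I would prove this by showing one can always reach a fixed near-maximum matching, rotating along augmenting and alternating paths and using the single spare token, exploiting that $\overline G$ is triangle-free, $C_5$-free and fork-free to keep such paths available.

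The main obstacle will be precisely this last point. In the spider case there was an elimination ordering to lean on; here there is none, and one must argue directly that one spare colour — equivalently, matchings just one edge short of maximum — always leaves enough room to carry out the required local exchanges in $\overline G$. I expect the triangle-, $C_5$- and fork-freeness of $\overline G$ to be exactly what keeps this matching-reconfiguration connected, and establishing it is the technical heart of the argument.
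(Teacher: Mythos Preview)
Your structural dichotomy for prime semi-$P_4$-sparse graphs is wrong, and this is a genuine gap. The Fouquet--Giakoumakis theorem (Theorem~\ref{semiparse} in the paper) says that a prime semi-$P_4$-sparse graph $H$ satisfies: $H$ or $\overline{H}$ is bipartite, or $H$ or $\overline{H}$ is a thin spider. That gives four cases, not two, and the one you lose is ``$H$ bipartite but not a spider''. Concretely, take the half-graph on six vertices $B=\{b_1,b_2,b_3\}$, $W=\{w_1,w_2,w_3\}$ with $b_i$ adjacent to $w_1,\dots,w_{4-i}$ (this is exactly the graph of Lemma~\ref{p5bip} with $n=6$). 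It is prime, bipartite, $P_5$-free, $C_5$-free, and co-fork-free (the co-fork contains a triangle), hence prime semi-$P_4$-sparse. It is not a spider: any clique has size at most $2$, so a spider decomposition would force $|K|=|S|=2$ and $|R|\le 1$, at most five vertices. And $\{b_1,b_2,b_3\}$ is an independent set of size $3$, so $\alpha(H)>2$. Thus your case split misses this graph entirely, and more generally the whole family of prime $P_5$-free bipartite graphs on at least six vertices. The paper handles this missing case via Lemma~\ref{blowupbip}: such graphs always have a vertex of degree~$1$ (Lemma~\ref{p5bip}), so blowups have a tight clique cutset and Lemma~\ref{tight} applies.

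A second, smaller point: your treatment of the $\alpha(H)\le 2$ case is both unnecessary and unfinished. You yourself flag the matching-reconfiguration argument as ``the technical heart'' and leave it as a plan. But blowups of $3K_1$-free graphs are $3K_1$-free, and Merkel~\cite{3k1free} already proved all $3K_1$-free graphs are recolorable; the paper simply cites this (Lemma~\ref{blow3K1-free}). So there is no need to reinvent that result through matchings in the complement. Your spider case, by contrast, is fine and matches the paper's Lemma~\ref{blowupspider}.
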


Feghali and Fiala \cite{Feghali}, proved that every 3-colorable ($P_5$, house, $C_5$)-free graph is recolorable and asked if every ($P_5$, house, $C_5$)-free graph is recolorable. We prove the following.

\begin{theorem}\label{PHC}
    Every ($P_5$, house)-free graph is recolorable if and only if every ($P_5$, house, $C_5$)-free graph is recolorable.
\end{theorem}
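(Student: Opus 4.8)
The ``only if'' direction is immediate: every $(P_5,\mathrm{house},C_5)$-free graph is in particular $(P_5,\mathrm{house})$-free, so if every graph in the larger class is recolorable then so is every graph in the smaller one. The content is the ``if'' direction, which I would deduce from Theorem~\ref{hereditary}. So assume every $(P_5,\mathrm{house},C_5)$-free graph is recolorable. Since the $(P_5,\mathrm{house})$-free graphs form a hereditary class, Theorem~\ref{hereditary} reduces the goal to showing that every blowup of every prime $(P_5,\mathrm{house})$-free graph is recolorable, and I would split this according to whether the prime graph contains an induced $C_5$. The argument rests on two facts.

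\textbf{Fact 1 (blowups preserve forbidding a graph with no ``closed-neighbourhood twins'').} If $F$ is a graph in which no two distinct vertices have the same closed neighbourhood, and $G$ is $F$-free, then every blowup of $G$ is $F$-free. The proof is short: in a blowup $B$ of $G$, any two vertices lying in a common clique substituted for a vertex of $G$ are adjacent and have the same neighbours outside that clique, hence the same closed neighbourhood inside every induced subgraph containing both; so an induced copy of $F$ in $B$ must use vertices from pairwise distinct substituted cliques, and projecting each vertex of the copy to the vertex of $G$ whose clique contains it yields an induced copy of $F$ in $G$, a contradiction. One then checks that each of $P_5$, the house, $C_5$, and the bull has no two vertices with the same closed neighbourhood, so Fact~1 applies to all of them; in particular a blowup of a $(P_5,\mathrm{house},C_5)$-free graph is $(P_5,\mathrm{house},C_5)$-free, and a blowup of a $(P_5,\mathrm{house},\mathrm{bull})$-free graph is $(P_5,\mathrm{house},\mathrm{bull})$-free.

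\textbf{Fact 2 (structure).} Every prime $(P_5,\mathrm{house})$-free graph is $C_5$-free or bull-free. Granting Facts~1 and~2, let $H$ be a prime $(P_5,\mathrm{house})$-free graph and $B$ a blowup of $H$. If $H$ is $C_5$-free, then $H$ is $(P_5,\mathrm{house},C_5)$-free, so by Fact~1 $B$ is $(P_5,\mathrm{house},C_5)$-free, hence recolorable by the assumption. If $H$ contains an induced $C_5$, then by Fact~2 $H$ is bull-free, so $H$ is $(P_5,\mathrm{house},\mathrm{bull})$-free, so by Fact~1 $B$ is $(P_5,\mathrm{house},\mathrm{bull})$-free, hence recolorable by Theorem~\ref{PHB}. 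In every case $B$ is recolorable, so Theorem~\ref{hereditary} completes the ``if'' direction.

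\textbf{The main obstacle is Fact~2.} My plan for it: fix a prime $(P_5,\mathrm{house})$-free graph $G$ with an induced $C_5$ on $v_1,\dots,v_5$ in cyclic order, and first classify the possible sets $N(u)\cap\{v_1,\dots,v_5\}$ for a vertex $u$ outside the cycle. A short check shows this set must be empty, a non-adjacent pair $\{v_i,v_{i+2}\}$, three consecutive vertices $\{v_{i-1},v_i,v_{i+1}\}$, or all five vertices, since every other pattern creates an induced $P_5$ (for example, one neighbour $v_i$ gives $u\!-\!v_i\!-\!v_{i+1}\!-\!v_{i+2}\!-\!v_{i+3}$, and two consecutive neighbours $v_i,v_{i+1}$ give $u\!-\!v_{i+1}\!-\!v_{i+2}\!-\!v_{i+3}\!-\!v_{i+4}$) or an induced house (four neighbours, or three neighbours that are not all consecutive). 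Two further inputs come from primality: $G$ is connected and co-connected, and a vertex attaching exactly at $\{v_i,v_{i+2}\}$ has the same neighbourhood as $v_{i+1}$ outside $\{v_i,v_{i+1},v_{i+2}\}$, so some vertex must distinguish them. Using these restrictions I would rule out an induced bull $B'$: the triangle of $B'$ is not contained in the triangle-free $C_5$, so it meets $V(G)\setminus\{v_1,\dots,v_5\}$ in at least two vertices, and the attachment constraints force every vertex adjacent to one such triangle vertex to be adjacent to another of them, leaving no room for the two degree-one vertices of a bull. I expect this case analysis to be the only delicate part; Fact~1 and the assembly above are routine.
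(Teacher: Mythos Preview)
Your overall framework coincides with the paper's: both reduce via Theorem~\ref{hereditary} to blowups of prime $(P_5,\mathrm{house})$-free graphs, and both split on whether the prime graph contains an induced $C_5$, handling the $C_5$-free case by noting that blowups remain $(P_5,\mathrm{house},C_5)$-free. The divergence is in the $C_5$ case. The paper invokes a known structural theorem of Fouquet and Vanherpe (Theorem~\ref{p5house}): \emph{every prime $(P_5,\mathrm{house})$-free graph is either $C_5$ itself or $C_5$-free}. From this the $C_5$ case is a one-liner, since blowups of $C_5$ are $3K_1$-free and hence recolorable by Lemma~\ref{blow3K1-free}; no appeal to Theorem~\ref{PHB} is needed. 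Your Fact~2 is strictly weaker than Theorem~\ref{p5house} (and follows from it), so the strategy of routing through Theorem~\ref{PHB} is legitimate, just less direct.

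The genuine gap is in your plan for Fact~2. First, the claim that the bull's triangle must meet $V(G)\setminus\{v_1,\dots,v_5\}$ in at least two vertices is false: a vertex $u$ with the three-consecutive attachment $\{v_{i-1},v_i,v_{i+1}\}$ gives triangles $uv_{i-1}v_i$ and $uv_iv_{i+1}$ with exactly one vertex outside the $C_5$, and these cannot be dismissed a priori. Second, and more seriously, your argument implicitly assumes the bull interacts with the chosen $C_5$, but an induced bull could lie entirely in $V(G)\setminus\{v_1,\dots,v_5\}$; the attachment classification then constrains each bull vertex's edges \emph{into} the $C_5$ but says nothing about the edges \emph{among} the bull vertices, so the sentence ``the attachment constraints force every vertex adjacent to one such triangle vertex to be adjacent to another'' does not follow. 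To make your route work you would essentially have to reprove (a large part of) Theorem~\ref{p5house}; it is far simpler to cite it and finish with Lemma~\ref{blow3K1-free}, as the paper does.
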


Section \ref{conclusion} contains some open questions.

\section{Preliminaries}

 A \emph{path} in a graph is a sequence of distinct vertices $v_i$ and edges of the form $v_0$, $v_0v_1$, $v_1,\dots, v_{p-1}$, $v_{p-1}v_p$, $v_p$. The length of a path is equal to the number of edges in the path. A graph is said to be \emph{connected} if there exists a path between every pair of distinct vertices of the graph. A vertex $v\in V(G)$ is said to be \emph{universal} if $v$ is adjacent to every vertex in $V(G)\setminus \{v\}$. We say a coloring $\alpha$ of $G$ is \emph{reachable} from a coloring $\beta$ of $G$ in $R_{\ell}(G)$ if there is a path between $\alpha$ and $\beta$ in $R_{\ell}(G)$. Where $A\subseteq V(G)$ and  $B\subseteq V(G)$, we say $A$ is \emph{complete to} $B$ if every vertex in $A$ is adjacent to every vertex in $B$.  A set of mutually adjacent vertices in a graph is called a \emph{clique}. A set of mutually non-adjacent vertices in a graph is called an \emph{independent set}. Where $\alpha$ is a coloring of $G$ and $S\subseteq V(G)$, we use $\alpha(S)$ to denote the set of colors that appear on the vertices of $S$ under $\alpha$.
 
 Let $P_n$, $C_n$, and $K_n$ denote the path, cycle, and complete graph on $n$ vertices, respectively. Let $K_{p,q}$ denote the complete bipartite graph with $p$ vertices in one set of the bipartition and $q$ vertices in the other. A graph $H$ is said to be a \emph{subgraph} of $G$, if $V(H)\subseteq V(G)$ and $E(H)\subseteq E(G)$. A subgraph of $G$ \emph{induced} by a subset $S\subseteq V(G)$ is the subgraph of $G$ with vertex-set $S$ and edge-set all edges of $G$ which have both ends in $S$; it is denoted by $G[S]$. An induced subgraph $H$ of $G$ is said to be \emph{proper} if $V(H) \subset V(G)$. For a subset $S\subseteq V(G)$, we use $G$-$S$ to denote the subgraph of $G$ obtained by deleting the vertices of $S$ from $G$. For a vertex $v$, we use $G$-$v$ instead of $G$-$\{v\}$. For a subset $M\subseteq E(G)$, we use $G$-$M$ to denote the graph obtained by deleting $M$ from $G$, that is, the graph with vertex-set $V(G)$ and edge-set $E(G)\setminus M$.

  The \emph{complement of a graph} $G$, denoted \emph{co-$G$}, is the graph with the same vertices as $G$ such that two vertices are adjacent in co-$G$ if and only if they are non-adjacent in $G$. A \emph{component} of a graph $G$ is a maximal connected subgraph of $G$. For two vertex-disjoint graphs $G$ and $H$, the \emph{disjoint union} of $G$ and $H$, denoted by $G+H$, is the graph with vertex-set $V(G) \cup V(H)$ and edge-set $E(G) \cup E(H)$. For a positive integer $r$, we use $rG$ to denote the graph consisting of the disjoint union of $r$ copies of $G$. The \emph{join} of two graphs $G_1$ and $G_2$ is the graph obtained from the disjoint union of $G_1$ and $G_2$ by adding edges from each vertex of $G_1$ to every vertex of $G_2$. If $G$ is disconnected, then it is easy to see that $G$ is recolorable if and only if every component of $G$ is recolorable. So we may assume that $G$ is connected when appropriate. We also note that if $G$ is a join of two graphs $G_1$ and $G_2$, then $G$ is recolorable if both $G_1$ and $G_2$ are recolorable \cite{manuscript}.

\section{Complexity}\label{sec:comp}


A module $S$ is said to be \emph{maximal} if $S\subset V(G)$ and it is not contained in a larger module $M\subset V(G)$. Two sets $X$ and $Y$ \emph{overlap} if $X$-$Y$, $Y$-$X$, and $X\cap Y$ are all non-empty. If two modules $S_1$ and $S_2$ of $G$ overlap, then it is easy to see that $S_1$-$S_2$, $S_2$-$S_1$, $S_1\cap S_2$, and $S_1\cup S_2$ are also modules of $G$. If $G$ is neither a join nor a disjoint union of two graphs, then any two maximal modules of $G$ are disjoint, and hence $V(G)$ can be partitioned into maximal modules $S_1$, $S_2$,\dots, $S_m$. Partitioning $V(G)$ into maximal modules can be done in linear-time; see \cite{Habib} for more information.

For a graph $G$, the sibling of $G$ is the graph obtained by adding, for each $x\in V(G)$, a vertex $x^{'}$ adjacent to only $x$.

\begin{lemma}\label{Hprime}
    For any connected graph $G$ the sibling of $G$ is prime.
\end{lemma}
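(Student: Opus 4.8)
The plan is to argue by contradiction. Write $H$ for the sibling of $G$, so that $V(H)=V(G)\cup\{x':x\in V(G)\}$ and each $x'$ is a pendant vertex whose only neighbour is $x$. Suppose $H$ has a non-trivial module $M$, i.e.\ a module with $|M|\ge 2$ and $M\subsetneq V(H)$; everything will follow from two easy ``propagation'' observations together with the connectivity of $G$.

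The first observation is that if $v\in V(G)\cap M$ then $v'\in M$: since $v'$ is adjacent to $v\in M$ and $M$ is a module, $v'$ is adjacent to every vertex of $M$, but $v'$ has only the one neighbour $v$, so $M\subseteq\{v\}$ unless $v'\in M$; as $|M|\ge 2$ this forces $v'\in M$. The second is that $M$ cannot be disjoint from $V(G)$: otherwise $M$ would consist of at least two pendant vertices, say it contains $x'$ and $y'$ with $x\ne y$, and then the original vertex $x\notin M$ is adjacent to $x'\in M$ but not to $y'\in M$, contradicting that $M$ is a module.

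So $M\cap V(G)\ne\emptyset$, and the main step is to upgrade this to $V(G)\subseteq M$. If $M\cap V(G)$ were a proper non-empty subset of $V(G)$, then, $G$ being connected, there would be an edge $ab\in E(G)$ with $a\in M$ and $b\notin M$; by the first observation $a'\in M$, and now $b$ is adjacent to $a\in M$ but not to $a'\in M$ (since $b\ne a$), contradicting the module property. Hence $V(G)\subseteq M$, and a further application of the first observation to every vertex of $G$ puts all the pendants into $M$ as well, giving $M=V(H)$ and contradicting that $M$ is proper. (The degenerate case $G=K_1$, where $H=K_2$, is handled automatically, since $K_2$ has no proper vertex subset of size at least $2$.)

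The proof carries essentially no computational content; the only substantive use of a hypothesis is in the connectivity step, through the standard fact that in a connected graph every non-empty proper set of vertices sends out an edge. That step is also the conceptual core: the pendant copies force a module to spread to a neighbour of any vertex it already contains, and connectivity then drives it to engulf all of $V(G)$, hence all of $V(H)$.
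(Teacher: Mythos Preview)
Your proof is correct and follows essentially the same route as the paper's: both argue by contradiction, show that a non-trivial module must meet $V(G)$, that any $v\in M\cap V(G)$ drags its pendant $v'$ into $M$, and then use connectivity of $G$ (via the fact that a vertex outside $M$ adjacent to $v$ but not to $v'$ violates the module property) to force $M=V(H)$. The only cosmetic difference is that the paper isolates ``$N_G(x)\subseteq M$'' as a separate claim before invoking connectivity, whereas you fold these into a single edge-across-the-cut argument.
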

\begin{proof}
    Let $H$ be the sibling of a connected graph $G$ obtained by adding, for each $x\in V(G)$, a vertex $x^{'}$ adjacent to only $x$. Assume $H$ is not prime. Then $H$ contains a non-trivial module $S^{'}$.
    
    Suppose $S^{'} \cap V(G)$ = $\emptyset$. Let $x^{'}$ and $y^{'}$ be in $S^{'}$. Since $x\in N_{H}(x^{'})\setminus N_{H}(y^{'})$, $x$ must be in $S^{'}$, a contradiction. Thus $S^{'} \cap V(G) \neq \emptyset$.\\
    \noindent
    \textit{Claim 1}: For any $x\in S^{'}\cap V(G)$, $x^{'}\in S^{'}$.\\
    Let $x\in S^{'}\cap V(G)$. Suppose $x^{'}\notin S^{'}$. Since $S^{'}$ is a non-trivial module, it must contain some vertex $y$ of $H$ different from $x$. Then $x^{'}$ is adjacent to $x$ but not to $y$, this contradicts that $S^{'}$ is a module. This proves Claim 1.\\
    \noindent
    \textit{Claim 2}: For any $x\in S^{'}\cap V(G)$, $N_{G}(x)\subseteq S^{'}$.\\
    Let $x\in S^{'}\cap V(G)$. By Claim 1, $x^{'}\in S^{'}\cap V(H)$. Since every vertex in $N_{G}(x)$ is adjacent to $x$ but not to $x^{'}$, we have $N_{G}(x)\subseteq S^{'}$. This proves Claim 2.

    Since $H$ and $G$ are connected and $V(H) = V(G) \cup \{x^{'} \mid x \in V(G)\}$, by Claims 1 and 2, we have $S^{'} = V(H)$. This contradicts that $S^{'}$ is a non-trivial module.    
\end{proof}

\begin{theorem}[\cite{Garey1974}]
    For all $k\ge 3$, deciding whether a connected graph is $k$-colorable is NP-complete.
\end{theorem}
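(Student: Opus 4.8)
The plan is to establish membership in NP and then prove NP-hardness for all $k \ge 3$ in two stages: first a direct reduction showing that $3$-colorability is NP-hard even for connected graphs, and then a padding argument that lifts hardness from $k$ to $k+1$. Membership in NP is immediate: a $k$-coloring is a certificate of size $O(n\log k)$ whose validity (that adjacent vertices receive distinct colors from $\{1,\dots,k\}$) is checkable in polynomial time.

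For the base case $k=3$ I would reduce from $3$-SAT. Given a formula $\varphi$ with variables $x_1,\dots,x_n$ and clauses $C_1,\dots,C_m$, build a graph $G_\varphi$ as follows. Introduce a \emph{palette} triangle on three vertices $v_T, v_F, v_B$; in any $3$-coloring these receive all three colors, and we name the colors $T$, $F$, $B$ after the vertex carrying them. For each variable $x_i$ add two vertices $x_i$ and $\overline{x}_i$ and make $\{x_i, \overline{x}_i, v_B\}$ a triangle; this forces the pair $\{x_i, \overline{x}_i\}$ to receive the colors $\{T,F\}$ in one of the two orders, which we read off as a truth value. For each clause $C_j = (a \lor b \lor c)$ attach a standard OR-gadget: a small fixed subgraph, joined to the palette, whose output vertex is adjacent to $v_F$ and $v_B$ (hence forced to color $T$) and which admits a proper $3$-coloring precisely when at least one of the literal vertices $a,b,c$ is colored $T$; a three-input OR is obtained by chaining two two-input gadgets. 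Since every gadget and every literal vertex is attached to the palette triangle, $G_\varphi$ is connected.

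The correctness argument then splits into two directions. If $\varphi$ is satisfiable, I would color the palette $T,F,B$, color each literal vertex according to the satisfying assignment, and extend through each OR-gadget, which is possible exactly because each clause contains a true literal. Conversely, from any proper $3$-coloring of $G_\varphi$ I would read a truth assignment off the literal vertices (well-defined by the variable triangles) and use the OR-gadget constraints to certify that every clause is satisfied. This yields a polynomial-time many-one reduction from $3$-SAT, so $3$-colorability of connected graphs is NP-complete.

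For the inductive step I would reduce $k$-colorability to $(k+1)$-colorability: given a connected graph $G$, form $G^{+}$ by adding a single new vertex $u$ adjacent to every vertex of $G$ (the join of $G$ with $K_1$). Then $G^{+}$ is connected, and in any $(k+1)$-coloring the universal vertex $u$ occupies a color that no vertex of $G$ can use, so the restriction to $G$ is a $k$-coloring; conversely any $k$-coloring of $G$ extends by giving $u$ a fresh color. Hence $G$ is $k$-colorable if and only if $G^{+}$ is $(k+1)$-colorable, and iterating from the base case gives NP-hardness for every $k \ge 3$. The main obstacle is the base case, namely designing the OR-gadget and verifying, in both directions, that its proper $3$-colorings correspond exactly to the clause being satisfied; by contrast, maintaining connectivity (guaranteed by the palette triangle and by the universal vertices) and the padding step itself are routine.
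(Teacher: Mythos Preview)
The paper does not prove this theorem at all: it is stated with a citation to Garey--Johnson--Stockmeyer and used as a black box in the proof of Theorem~\ref{thm:primecoloring}. There is therefore no ``paper's own proof'' to compare against.

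That said, your outline is the standard textbook argument and is essentially correct. Membership in NP is immediate; the padding step via a universal vertex is clean and preserves connectedness; and the base case via the palette triangle, variable triangles, and clause OR-gadgets is exactly the classical reduction. The one genuine gap you yourself flag is that you never actually exhibit the OR-gadget or verify its two key properties (that it is $3$-colorable iff some input literal is colored $T$, and that any such coloring extends). For a complete proof you would need to write down the gadget explicitly---for instance, the standard five-vertex two-input OR gadget with output wired to $v_F$ and $v_B$---and check both directions by a short case analysis. Once that is filled in, the argument is complete.
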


We are now ready to prove Theorem \ref{thm:primecoloring}.
\begin{proof}[Proof of Theorem \ref{thm:primecoloring}]
    Clearly the problem is in NP. To prove that it is NP-complete we will give a reduction from a connected graph to a prime graph. Let $G$ be a connected graph and let $H$ be the sibling of $G$ obtained by adding, for each $x\in V(G)$, a vertex $x^{'}$ adjacent to only $x$. Note that, for all $k\ge 3$, $H$ is $k$-colorable if and only if $G$ is $k$-colorable. By Lemma \ref{Hprime}, $H$ is a prime graph. This proves the theorem.
\end{proof}

\begin{theorem}[\cite{Bonsma2009}]
    Given a connected graph $G$, for all $k\ge 4$, deciding whether there exists a path between two colorings in $R_{k}(G)$ is PSPACE-complete.
\end{theorem}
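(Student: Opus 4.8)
The plan is to prove the two halves of PSPACE-completeness separately: membership in PSPACE, which is routine, and PSPACE-hardness, which carries all the content. For membership, first I would observe that a $k$-coloring of $G$ is encodable in $O(n\log k)$ bits, that adjacency of two colorings in $R_k(G)$ is testable in polynomial time, and that $R_k(G)$ has at most $k^n$ vertices, so if a path between the two given colorings exists then a shortest one has length at most $k^n$. Deciding reachability is therefore an instance of $s$-$t$ connectivity in an implicitly described graph of exponential size, solvable in nondeterministic polynomial space: guess the recoloring sequence one single-vertex change at a time while storing only the current coloring and a step counter bounded by $k^n$ (hence $O(n\log k)$ bits), and accept upon reaching the target. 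Since $\mathrm{NPSPACE}=\mathrm{PSPACE}$ by Savitch's theorem, the problem lies in PSPACE.

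For hardness I would reduce from a problem whose reconfiguration variant is already known to be PSPACE-complete; the cleanest such source is Nondeterministic Constraint Logic (NCL) of Hearn and Demaine. An NCL instance is a graph with edges of weight $1$ and $2$ together with AND- and OR-vertices; a \emph{configuration} is an orientation of the edges in which the total weight entering each vertex is at least $2$, and a \emph{move} reverses one edge while preserving this constraint at every vertex. Deciding whether one configuration can be transformed into another by a sequence of moves is PSPACE-complete. Given such an instance with source and target configurations, the goal is to construct in polynomial time a connected graph $G$ and two $k$-colorings $\alpha,\beta$ so that the reconfiguration sequences from $\alpha$ to $\beta$ in $R_k(G)$ correspond \emph{exactly} to the legal NCL move sequences between the two configurations.

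The construction I would use has three kinds of pieces. For each NCL edge I attach a gadget containing a distinguished \emph{port} vertex, two of whose admissible colors encode the two orientations of the edge, so that flipping this port's color models reversing the edge. For each AND/OR vertex I attach a \emph{constraint gadget} wired to the ports of its incident edges that is properly colorable precisely when the minimum-in-weight condition holds, thereby forbidding exactly the illegal joint orientations. To keep the dynamics faithful I would embed all signal vertices in a rigid \emph{frame}: auxiliary vertices whose colors are forced to a single value in every coloring reachable from $\alpha$ (for instance by attaching cliques or pendant structures that leave each frame vertex only one free color), so that along any recoloring sequence the frame stays frozen and only the ports may change, and only in ways that mirror edge reversals; the wiring also makes $G$ connected. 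It then remains to verify the correspondence in both directions: every legal NCL move lifts to a bounded recoloring subsequence between the associated colorings, and conversely, because the frame is frozen and each constraint gadget admits a proper coloring only for legal port-states, every path in $R_k(G)$ projects onto a legal NCL move sequence.

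The hard part will be establishing the \emph{rigidity and faithfulness} of these gadgets: one must prove that the frame vertices genuinely cannot be recolored in any coloring reachable from $\alpha$, and that the constraint gadgets neither introduce spurious shortcuts nor block legitimate moves, so that the projection onto NCL moves is well defined and exact. This is also exactly the place where the hypothesis $k\ge 4$ enters: a fourth color is what lets one simultaneously freeze the frame and still leave each port a genuine binary choice, and any correct argument must break at $k=3$, since for three colors the reconfiguration problem is solvable in polynomial time \cite{Cereceda2008}.
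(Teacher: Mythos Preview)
The paper does not prove this theorem. It is quoted verbatim as a result of Bonsma and Cereceda \cite{Bonsma2009} and used as a black box: immediately after stating it, the paper proceeds to prove Theorem~\ref{thm:primerecoloring} by a trivial reduction (attach a pendant sibling to every vertex of a connected graph to obtain a prime graph, and observe that $k$-recoloring reachability is preserved). So there is no ``paper's own proof'' to compare your attempt against.

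As for your sketch itself: the PSPACE-membership paragraph is correct and standard. The hardness outline is at the right level of ambition---reducing from Hearn--Demaine NCL is a legitimate route---but what you have written is a plan, not a proof. The entire difficulty lies precisely in the part you flag as ``hard'': building explicit, verifiable edge, AND/OR, and frame gadgets for \emph{list-free} $k$-coloring and proving their rigidity and faithfulness. None of those gadgets are specified, and experience shows that making frozen frames coexist with genuinely bistable ports in proper $k$-coloring (without lists) is delicate; Bonsma and Cereceda's actual construction is quite intricate and does not proceed by a direct NCL encoding of the kind you describe. So while nothing you wrote is wrong, it would not be accepted as a proof without the concrete gadgetry and the accompanying correctness lemmas.
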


We will now prove Theorem \ref{thm:primerecoloring}.
\begin{proof}[Proof of Theorem \ref{thm:primerecoloring}]
    Clearly the problem is in PSPACE. To prove that it is PSPACE-complete we will give a reduction from a connected graph to a prime graph. Let $G$ be a connected graph and let $H$ be the sibling of $G$ obtained by adding, for each $x\in V(G)$, a vertex $x^{'}$ adjacent to only $x$. Then, by Lemma \ref{Hprime}, $H$ is a prime graph. Let $k\ge 4$. Let $\alpha$ and $\beta$ be any two $k$-colorings of $G$. Let $\alpha^{'}$ and $\beta^{'}$ be two $k$-colorings of $H$ obtained by extending $\alpha$ and $\beta$, respectively, by coloring each vertex $x^{'}\in V(H)\setminus V(G)$ a color different from the color of $x$.
    
    Since $G$ is an induced subgraph of $H$, any $k$-coloring of $H$ restricted to $V(G)$ is a $k$-coloring of $G$. Thus if there is a path between $\alpha^{'}$ and $\beta^{'}$ in $R_{k}(H)$, then there is a path between $\alpha$ and $\beta$ in $R_{k}(G)$.

    Assume that there is a path $P$ between $\alpha$ and $\beta$ in $R_{k}(G)$. Let $\gamma$ and $\psi$ be any two $k$-colorings of $G$ adjacent in the path $P$. Let $\gamma^{'}$ and $\psi^{'}$ be any two $k$-colorings of $H$ whose restriction to $V(G)$ is $\gamma$ and $\psi$, respectively. We will prove that there is a path between $\gamma^{'}$ and $\psi{'}$ in $R_{k}(H)$. Since $\gamma$ and $\psi$ are adjacent in the path $P$, there exists a unique vertex $y\in V(G)$ such that $\gamma(y) \neq \psi(y)$. Let $y^{'}\in V(H)\setminus V(G)$ be adjacent to only $y$. Starting from $\gamma^{'}$, recolor $y^{'}$ with a color not in $\{\gamma(y), \psi(y)\}$. Recolor the vertex $y$ with color $\psi(y)$. Recolor each vertex $x^{'}$ in $V(G)\setminus V(H)$, one at a time, with color $\psi^{'}(x^{'})$ to obtain the coloring $\psi^{'}$.

    Hence there is a path between $\alpha$ and $\beta$ in $R_{k}(G)$ if and only if there is a path between $\alpha^{'}$ and $\beta^{'}$ in $R_{k}(H)$. This completes the proof.
\end{proof}

\section{Recoloring and Modular Decomposition}\label{modulardecomposition}

Let $G$ be a graph which is neither a join nor a disjoint union of two graphs; the \emph{skeleton} of $G$, denoted $G^{*}$, is the graph obtained by contracting each maximal module to a single vertex (and removing any loops and multiple edges created). It can also be obtained from $G$ by recursively substituting a vertex for each maximal module in $G$. Note that $G^{*}$ is a prime subgraph of $G$, induced by a set containing exactly one vertex from each maximal module in $G$.

For the remainder of this section, we use the following decomposition. Let $G$ be a graph which is neither a join nor a disjoint union of two graphs. Let $S_1$,\dots, $S_m$ be a partition of $V(G)$ into maximal modules. Note that $S_p \cap S_q$ = $\emptyset$ for all $p\neq q$, $p$ and $q \in [m]$. Let $\chi(G) = k$ and let $\chi(G[S_p]) = k_p$, for all $p\in [m]$. For a graph $G$, the \emph{clique skeleton} of $G$ is the graph obtained from $G$ by recursively substituting a clique $Q_p$ of size $k_p$ for each $S_p$ in $G$, where $p \in [m]$. We use $G(Q_1,\dots,Q_m)$ to denote the clique skeleton of $G$. Similar operations have been defined in the literature, for example see \cite{chinh}. In Figure \ref{fig:moddecomp}, for a graph $G$, we have illustrated its skeleton and its clique skeleton.

Let $H$ = $G(Q_1,\dots,Q_m)$ be the clique skeleton of $G$. Then for any $k$-coloring $\alpha$ of $G$, there exists a $k$-coloring $\beta$ of $H$ such that $\beta(Q_p)\subseteq \alpha(S_p)$, for all $p\in [m]$. Note that such a coloring $\beta$ of $H$ can be obtained by coloring each $Q_p$ with colors from the set $\alpha(S_p)$, for all $p\in [m]$. Similarly, given any $k$-coloring $\beta$ of $H$, there exists a $k$-coloring $\alpha$ of $G$ such that for all $p\in [m]$, $\alpha(S_p)$ = $\beta(Q_p)$. Thus $\chi(G)$ = $\chi(H)$. Note that the clique skeleton of a graph $G$ is a blowup of the skeleton of $G$. 

\begin{figure}[ht]
    \centering
    \definecolor{c1}{rgb}{1,0.4,0.4}
    \definecolor{c2}{rgb}{0.8,0.7,0.6}
    \definecolor{c3}{rgb}{0.5,0.9,0.6}
    \definecolor{c4}{rgb}{0.7,0.8,0.4}
    \definecolor{c5}{rgb}{0.6,0.7,0.9}
    \subfloat[A graph $G$]{
    \begin{tikzpicture}
    \node[draw,circle,fill=c1](1) at (0,1.25) {$a_1$};
    \node[draw,circle,fill=c1](2) at (0,-0.25) {$a_2$};
    \node[draw,circle,fill=c2](3) at (2,1.25) {$b_1$};
    \node[draw,circle,fill=c2](4) at (3,-0.25) {$b_2$};
    \node[draw,circle,fill=c3](5) at (3,3) {$c_1$};
    \node[draw,circle,fill=c4](6) at (5,0.5) {$d_1$};
    \node[draw,circle,fill=c5](7) at (7.5,3) {$e_1$};
    \node[draw,circle,fill=c5](8) at (7.5,1.75) {$e_2$};
    \node[draw,circle,fill=c5](9) at (7.5,0.5) {$e_3$};
    \node[draw,circle,fill=c5](10) at (7.5,-0.75) {$e_4$};
    \node[draw,circle,fill=c5](11) at (7.5,-2) {$e_5$};
    \draw (1)--(2);\draw (1)--(3);\draw (1)--(4);\draw (2)--(3);\draw (2)--(4);\draw (5)--(3);\draw (4)--(5);\draw (6)--(3);\draw (6)--(4);\draw (5)--(6);\draw (7)--(6);\draw (8)--(6);\draw (9)--(6);\draw (10)--(6);\draw (11)--(6);\draw (11)--(10);\draw (10)--(9);\draw (9)--(8);\draw (8)--(7);\draw (11) .. controls +(right:1.5cm) and +(right:1.5cm) .. (7);
    \end{tikzpicture}}\\ \vspace{0.5cm}
    \subfloat[The skeleton of $G$]{
    \begin{tikzpicture}[scale=0.8]
    \node[draw,circle,fill=c1](1) at (1,0.5) {$A$};
    \node[draw,circle,fill=c2](4) at (3,0.5) {$B$};
    \node[draw,circle,fill=c3](5) at (4,2) {$C$};
    \node[draw,circle,fill=c4](6) at (5,0.5) {$D$};
    \node[draw,circle,fill=c5](9) at (7,0.5) {$E$};
    \draw (1)--(4);\draw (4)--(5);\draw (6)--(4);\draw (5)--(6);\draw (9)--(6);
    \end{tikzpicture}}\hspace{8mm}
    \subfloat[The clique skeleton of $G$]{
    \begin{tikzpicture}[scale=0.8]
    \node[draw,circle,fill=c1](1) at (0.5,1.25) {$A_1$};
    \node[draw,circle,fill=c1](2) at (0.5,-0.25) {$A_2$};
    \node[draw,circle,fill=c2](4) at (3,0.5) {$B$};
    \node[draw,circle,fill=c3](5) at (4,2) {$C$};
    \node[draw,circle,fill=c4](6) at (5,0.5) {$D$};
    \node[draw,circle,fill=c5](8) at (7,2) {$E_1$};
    \node[draw,circle,fill=c5](9) at (7,0.5) {$E_2$};
    \node[draw,circle,fill=c5](10) at (7,-1) {$E_3$};
    \draw (1)--(2);\draw (1)--(4);\draw (2)--(4);\draw (4)--(5);\draw (6)--(4);\draw (5)--(6);\draw (8)--(6);\draw (9)--(6);\draw (10)--(6);\draw (10)--(9);\draw (9)--(8);\draw (8) .. controls +(right:1cm) and +(right:1cm) .. (10);
    \end{tikzpicture}}    
    \caption{A graph $G$, its skeleton, and its clique skeleton.}
    \label{fig:moddecomp}
\end{figure}

\begin{lemma}\label{homo1}
    Let $G$ be a graph such that $V(G)$ can be partitioned into maximal modules $S_1$,\dots, $S_m$. Assume every proper induced subgraph of $G$ is recolorable. Let $\alpha$ and $\beta$ be two $\chi$-colorings of $G$ such that for each $S_p$ of $G$, $\alpha(S_p)\subseteq \beta(S_p)$. Then there is a path between $\alpha$ and $\beta$ in $R_{\ell}(G)$, for all $\ell\geq \chi(G)$+1.
\end{lemma}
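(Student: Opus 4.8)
The plan is to transform $\alpha$ into $\beta$ by recoloring one module at a time. Fix any order $S_1,\dots,S_m$ of the maximal modules and, for $0\le p\le m$, let $\sigma_p$ be the coloring of $G$ that agrees with $\beta$ on $S_1\cup\cdots\cup S_p$ and with $\alpha$ on the remaining modules; thus $\sigma_0=\alpha$ and $\sigma_m=\beta$, and $\sigma_p$ is a proper coloring of $G$ for each $p$ because distinct modules interact only through the (fixed) adjacencies of the skeleton. It suffices to show that for each $p\in[m]$ there is a path from $\sigma_{p-1}$ to $\sigma_p$ in $R_\ell(G)$ along which only vertices of $S_p$ are recolored; concatenating these $m$ paths proves the lemma.

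Fix $p$ and let $N_p$ be the set of vertices of $G$ outside $S_p$ that are adjacent to $S_p$. Since $S_p$ and each $S_j$ are modules, each $S_j$ with $j\ne p$ is either contained in $N_p$ (and then complete to $S_p$) or disjoint from $N_p$; in particular $N_p$ is a union of some of the modules $S_j$. Let $C=\sigma_{p-1}(N_p)$ be the set of colors currently used on $N_p$. On each module $S_j\subseteq N_p$ the coloring $\sigma_{p-1}$ equals $\alpha|_{S_j}$ or $\beta|_{S_j}$, and $\alpha(S_j)\subseteq\beta(S_j)$, so $C\subseteq\beta(N_p)$. Now $\beta$ is a $\chi$-coloring of $G$ and $S_p$ is complete to $N_p$, hence $\beta(S_p)\cap\beta(N_p)=\emptyset$; since $|\beta(S_p)|\ge\chi(G[S_p])=k_p$ this gives $|C|\le|\beta(N_p)|\le\chi(G)-k_p$. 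Therefore the set $P:=\{1,\dots,\ell\}\setminus C$ of colors not used on $N_p$ satisfies $|P|\ge\ell-(\chi(G)-k_p)\ge k_p+1=\chi(G[S_p])+1$.

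Recoloring vertices of $S_p$ one at a time while freezing the rest of $G$ is exactly recoloring the graph $G[S_p]$, with the single extra requirement that every vertex of $S_p$ avoid the colors of $C$, i.e. use only colors in $P$. Both $\sigma_{p-1}|_{S_p}=\alpha|_{S_p}$ and $\sigma_p|_{S_p}=\beta|_{S_p}$ are colorings of $G[S_p]$ using only colors from $P$, since $\alpha(S_p)\subseteq\beta(S_p)$ is disjoint from $\beta(N_p)\supseteq C$. Because $S_p\subsetneq V(G)$, the graph $G[S_p]$ is a proper induced subgraph of $G$ and is therefore recolorable; as $|P|\ge\chi(G[S_p])+1$ and the reconfiguration graph of the colorings of a fixed graph depends only on the number of colors, there is a path from $\alpha|_{S_p}$ to $\beta|_{S_p}$ that recolors only vertices of $S_p$ and uses only colors in $P$. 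Each step of this path is a legal recoloring of $G$: the recolored vertex $v\in S_p$ avoids the colors of its neighbors inside $S_p$ by validity in $G[S_p]$, and avoids the colors of its neighbors in $N_p$ (which lie in $C$) since its new color lies in $P$. This produces the required path from $\sigma_{p-1}$ to $\sigma_p$ in $R_\ell(G)$.

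The heart of the argument, and the step needing the most care, is the counting in the second paragraph: no matter how many modules have already been converted to their $\beta$-coloring, the colors appearing on $N_p$ always lie inside $\beta(N_p)$, which is small because $\beta$ uses only $\chi(G)$ colors while $S_p$ consumes at least $\chi(G[S_p])$ of them. This keeps at least $\chi(G[S_p])+1$ colors free inside each module at every stage, which is precisely what lets us invoke the recolorability of the proper induced subgraph $G[S_p]$. The rest — that each $S_j$ is entirely inside or entirely outside $N_p$, and that ``freeze everything but $S_p$'' faithfully simulates reconfiguration of $G[S_p]$ on the palette $P$ — is routine bookkeeping.
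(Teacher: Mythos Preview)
Your proof is correct and follows essentially the same strategy as the paper's: process the modules one at a time, and at each step invoke the recolorability of the proper induced subgraph $G[S_p]$ after checking that enough colors are free on $S_p$. The paper phrases the counting slightly differently---it notes that since $\alpha(S_p)\subseteq\beta(S_p)$ for every $p$, both $\alpha$ and $\beta$ avoid some common color $c\in\{1,\dots,\ell\}$, and then uses $\beta(S_p)\cup\{c\}$ as the working palette---whereas you bound $|\sigma_{p-1}(N_p)|\le\chi(G)-k_p$ directly; these are the same observation, and your version makes the bookkeeping (in particular why the hybrid colorings $\sigma_p$ are proper and why $\beta(S_p)$ stays disjoint from $\sigma_{p-1}(N_p)$) more explicit than the paper does.
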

\begin{proof}
    Let $G$ be a graph such that $V(G)$ can be partitioned into maximal modules $S_1$,\dots, $S_m$. Let $\gamma$ be a coloring of $G$ and let $p\in [m]$. For any vertex $v\in S_p$, since its neighbors not in $S_p$ are complete to $S_p$, they will not be colored with a color from the set $\gamma(S_p)$ under $\gamma$. Let $\ell\ge \chi(G)$+1 and let $\{1,\dots,\ell\}$ be the set of available colors. Let $\alpha$ and $\beta$ be two $\chi$-colorings of $G$ such that $\alpha(S_p)\subseteq \beta(S_p)$, for all $p\in [m]$. Then there exists a color, say $c$, that does not appear on $V(G)$ under either $\alpha$ or $\beta$. Since $G[S_p]$ is recolorable for all $p\in [m]$, we can use the color $c$ to recolor the vertices in $S_p$ from $\alpha$ to $\beta$, for each $p\in [m]$ one at a time, without changing the colors on vertices in $S_j$ where $j\neq p$.
\end{proof}

\begin{lemma}\label{homo1A}
    Let $G$ be a graph such that $V(G)$ can be partitioned into maximal modules $S_1$,\dots, $S_m$. Let $H$ = $G(Q_1,\dots,Q_m)$ be the clique skeleton of $G$. Let $\alpha$ be any $\chi$-coloring of $G$. Let $\beta_1$ and $\beta_2$ be any two $\chi$-colorings of $H$ such that for each $S_p$ of $G$, $\beta_1(Q_p)\subseteq \alpha(S_p)$ and $\beta_2(Q_p)\subseteq \alpha(S_p)$. Then there is a path between $\beta_1$ and $\beta_2$ in $R_{\ell}(H)$, for all $\ell\geq \chi(H)$+1.
\end{lemma}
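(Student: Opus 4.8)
The plan is to deform the coloring from $\beta_1$ to $\beta_2$ one clique at a time, in the spirit of Lemma~\ref{homo1}. For $p=0,1,\dots,m$, let $\gamma_p$ be the coloring of $H$ that agrees with $\beta_2$ on $Q_1\cup\cdots\cup Q_p$ and with $\beta_1$ on $Q_{p+1}\cup\cdots\cup Q_m$, so $\gamma_0=\beta_1$ and $\gamma_m=\beta_2$. First I would check that each $\gamma_p$ is a proper coloring of $H$: within a single $Q_i$ it equals $\beta_1$ or $\beta_2$, and for distinct $i,j$ with $S_i$ complete to $S_j$ in $G$ the sets $\beta_a(Q_i)\subseteq\alpha(S_i)$ and $\beta_b(Q_j)\subseteq\alpha(S_j)$ are disjoint, since $\alpha$ is a proper coloring of $G$ and adjacent maximal modules receive disjoint color sets under $\alpha$. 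It then suffices to produce, for each $p\in[m]$, a path in $R_\ell(H)$ from $\gamma_{p-1}$ to $\gamma_p$ that recolors only vertices of $Q_p$; concatenating these $m$ paths gives the result.

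To build such a path, fix $p$. Since $\gamma_{p-1}$ and $\gamma_p$ coincide outside $Q_p$, every external neighbor of $Q_p$ keeps its color throughout this phase, and the colors appearing on $N_H(Q_p)$ lie in $\bigcup\{\alpha(S_q):q\neq p,\ S_q\text{ complete to }S_p\}\subseteq\alpha(V(G))\setminus\alpha(S_p)$. The latter set has size at most $k-k_p$, because $\alpha$ uses at most $k=\chi(G)=\chi(H)$ colors in total and at least $k_p=\chi(G[S_p])$ of them on $S_p$. Hence the set $C$ of colors of $\{1,\dots,\ell\}$ not appearing on $N_H(Q_p)$ satisfies $|C|\ge\ell-(k-k_p)\ge k_p+1$, using $\ell\ge k+1$. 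Now $Q_p$ is a clique on $k_p$ vertices; the colorings $\gamma_{p-1}|_{Q_p}=\beta_1|_{Q_p}$ and $\gamma_p|_{Q_p}=\beta_2|_{Q_p}$ use only colors of $C$ (as $\beta_1(Q_p),\beta_2(Q_p)\subseteq\alpha(S_p)\subseteq C$); and complete graphs are recolorable (being chordal, by \cite{bonamy2014}), so $R_{|C|}(K_{k_p})$ is connected. Lifting a path in $R_{|C|}(K_{k_p})$ to $H$ gives a sequence of single-vertex recolorings of $Q_p$, each keeping the coloring of $H$ proper, from $\gamma_{p-1}$ to $\gamma_p$, as desired.

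I expect the only real content to be the counting in the second paragraph: that the colors blocking a clique $Q_p$ always avoid $\alpha(S_p)$ and therefore number at most $k-k_p$, leaving exactly the one spare color per clique promised by $\ell\ge k+1$. This rests on two points I would state carefully: that $\alpha$ is a genuine coloring of $G$, which gives $\alpha(S_i)\cap\alpha(S_j)=\emptyset$ for complete pairs of maximal modules; and that while $Q_p$ is being recolored every other $Q_q$ is held at $\beta_1$ or $\beta_2$, so its colors stay inside $\alpha(S_q)$. Everything else is the routine ``recolor one module at a time'' argument already used in Lemma~\ref{homo1}.
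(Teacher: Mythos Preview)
Your proof is correct and follows essentially the same ``recolor one clique at a time'' approach as the paper. The paper's version is slightly shorter: rather than counting locally, it simply observes that since $\alpha$ is a $\chi$-coloring there is some color $c\in\{1,\dots,\ell\}$ absent from $\alpha(V(G))$, hence absent from $H$ under both $\beta_1$ and $\beta_2$, and this single global spare color already suffices to recolor each clique $Q_p$ within $\alpha(S_p)\cup\{c\}$.
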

\begin{proof}
    Assume the hypotheses. Let $\ell \geq \chi(H)$+1. Let $\{1,\dots,\ell\}$ be the set of available colors. Since $\alpha$ is $\chi$-coloring of $G$, there exists a color, say c, that does not appear on $V(G)$ under $\alpha$. Since $\beta_1(Q_p)\subseteq \alpha(S_p)$ and $\beta_2(Q_p)\subseteq \alpha(S_p)$, for all $p\in [m]$, the color $c$ also does not appear on $V(H)$ under either $\beta_1$ or $\beta_2$. Since each $Q_p$ is a clique, it is recolorable. Recolor the vertices in $Q_p$ from $\beta_1$ to $\beta_2$ using the extra color $c$, for each $p\in [m]$ one at a time, without changing the colors on vertices in $Q_j$ where $j\neq p$.
\end{proof}

\begin{lemma}\label{homo2}
    Let $G$ be a graph such that $V(G)$ can be partitioned into maximal modules $S_1$,\dots, $S_m$. Let $H$ = $G(Q_1,\dots,Q_m)$ be the clique skeleton of $G$. Let $\ell\geq \chi(G)$+1. Let $\alpha$ be an $\ell$-coloring of $G$ and let $\beta$ be a $\chi$-coloring of $G$. Let $\alpha^{'}$ be an $\ell$-coloring of $H$ such that for all $p\in [m]$, $\alpha^{'}(Q_{p}) \subseteq \alpha(S_{p})$. Let $\beta^{'}$ be a $\chi$-coloring of $H$ such that for all $p\in [m]$, $\beta^{'}(Q_{p})\subseteq \beta(S_{p})$. If there is a path between $\alpha$ and $\beta$ in $R_{\ell}(G)$, then there is a path between $\alpha^{'}$ and $\beta^{'}$ in $R_{\ell}(H)$.
\end{lemma}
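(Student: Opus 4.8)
The plan is to trace the given path in $R_\ell(G)$ one recoloring at a time, carrying along a compatible $\ell$-coloring of $H$. Write the path as $\alpha = \gamma_0, \gamma_1, \dots, \gamma_t = \beta$, where $\gamma_{i-1}$ and $\gamma_i$ differ on exactly one vertex. I will construct $\ell$-colorings $\delta_0, \delta_1, \dots, \delta_t$ of $H$, with $\delta_0 = \alpha'$, such that for every $i$ we have $\delta_i(Q_p) \subseteq \gamma_i(S_p)$ for all $p \in [m]$ (the \emph{invariant}) and $\delta_{i-1}, \delta_i$ are equal or adjacent in $R_\ell(H)$. The invariant holds for $\delta_0 = \alpha'$ by hypothesis, and once the construction is done the sequence $\alpha' = \delta_0, \delta_1, \dots, \delta_t$ witnesses that $\delta_t$ is reachable from $\alpha'$ in $R_\ell(H)$.

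Suppose $\delta_{i-1}$ has been built, and let $v$ be the vertex recolored in passing from $\gamma_{i-1}$ to $\gamma_i$, say $v \in S_q$ with its color changing from $a$ to $b$. Since only $v$ is recolored and $v \in S_q$, we have $\gamma_i(S_p) = \gamma_{i-1}(S_p)$ for every $p \ne q$, and for $S_q$ it is straightforward to check that $\gamma_{i-1}(S_q) \setminus \{a\} \subseteq \gamma_i(S_q) \subseteq \gamma_{i-1}(S_q) \cup \{b\}$. Hence $\delta_{i-1}(Q_p) \subseteq \gamma_{i-1}(S_p) = \gamma_i(S_p)$ for all $p \ne q$, so the invariant can fail only if $a \in \delta_{i-1}(Q_q)$ while $a \notin \gamma_i(S_q)$. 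If that does not happen, set $\delta_i := \delta_{i-1}$. Otherwise, since $Q_q$ is a clique there is a unique vertex $u \in Q_q$ with $\delta_{i-1}(u) = a$, and I obtain $\delta_i$ by recoloring $u$ alone.

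It then remains to choose a legal new color for $u$ that lies in $\gamma_i(S_q)$. The colors appearing on the neighbors of $u$ in $H$ form the set $(\delta_{i-1}(Q_q) \setminus \{a\}) \cup F$, where $F$ is the set of colors used by $\delta_{i-1}$ on $N_H(Q_q) \setminus Q_q$. Using that distinct maximal modules of $G$ are pairwise complete or pairwise anticomplete, the neighbors of $Q_q$ in $H$ are precisely the cliques $Q_p$ with $S_p$ complete to $S_q$; for each such $p$ we have $\delta_{i-1}(Q_p) \subseteq \gamma_{i-1}(S_p) = \gamma_i(S_p)$, and $\gamma_i(S_p)$ is disjoint from $\gamma_i(S_q)$ because $\gamma_i$ is a proper coloring of $G$, so $F \cap \gamma_i(S_q) = \emptyset$. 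Moreover $\delta_{i-1}(Q_q) \setminus \{a\} \subseteq \gamma_{i-1}(S_q) \setminus \{a\} \subseteq \gamma_i(S_q)$, a set of exactly $k_q - 1$ colors, while $|\gamma_i(S_q)| \ge \chi(G[S_q]) = k_q$ since $\gamma_i$ restricted to $S_q$ is a proper coloring of $G[S_q]$. Hence $\gamma_i(S_q) \setminus (\delta_{i-1}(Q_q) \setminus \{a\})$ contains some color $c$, which is used by no neighbor of $u$ in $H$; recoloring $u$ to $c$ gives a proper $\ell$-coloring $\delta_i$ of $H$ with $\delta_i(Q_q) = (\delta_{i-1}(Q_q) \setminus \{a\}) \cup \{c\} \subseteq \gamma_i(S_q)$, so the invariant is preserved and $\delta_{i-1}, \delta_i$ are adjacent in $R_\ell(H)$.

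After the final step, $\delta_t$ is reachable from $\alpha'$ in $R_\ell(H)$ and satisfies $\delta_t(Q_p) \subseteq \gamma_t(S_p) = \beta(S_p)$ for all $p$. Since $\beta$ is a $\chi$-coloring of $G$, the coloring $\delta_t$ uses at most $\chi(G) = \chi(H)$ colors, so it is a $\chi$-coloring of $H$; the same holds for $\beta'$, and both $\delta_t$ and $\beta'$ satisfy the hypotheses of Lemma \ref{homo1A} relative to the $\chi$-coloring $\beta$ of $G$ (note $\ell \ge \chi(G)+1 = \chi(H)+1$). Therefore there is a path between $\delta_t$ and $\beta'$ in $R_\ell(H)$, and joining it with the sequence $\alpha' = \delta_0, \dots, \delta_t$ yields the desired path between $\alpha'$ and $\beta'$. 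I expect the one delicate point to be the per-step argument of the third paragraph — that a single recoloring in $H$ always restores the invariant and that the recolored vertex has a free color — which comes down to the disjointness $F \cap \gamma_i(S_q) = \emptyset$ (from maximal modules being complete/anticomplete and $\gamma_i$ being proper) together with the count $|\gamma_i(S_q)| \ge k_q$.
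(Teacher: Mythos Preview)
Your proof is correct and follows exactly the paper's strategy: walk along the path in $R_\ell(G)$ one step at a time while maintaining a compatible coloring of $H$ (your invariant $\delta_i(Q_p)\subseteq\gamma_i(S_p)$), and then invoke Lemma~\ref{homo1A} to connect the endpoint $\delta_t$ to $\beta'$. Your per-step update is in fact more careful than the paper's: where the paper always recolors the vertex of $Q_j$ bearing color $\alpha_i(v)$ to the specific color $\alpha_{i+1}(v)$, you instead pick any $c\in\gamma_i(S_q)\setminus(\delta_{i-1}(Q_q)\setminus\{a\})$, which correctly handles the edge case $b\in\delta_{i-1}(Q_q)$ that the paper's phrasing glosses over.
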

\begin{proof}
    Assume the hypotheses. Let $\mathbb{P}$ be a path between $\alpha$ and $\beta$ in $R_{\ell}(G)$. Let $\alpha_i$ and $\alpha_{i+1}$ be two $\ell$-colorings of $G$, which are adjacent in $\mathbb{P}$. Let $\alpha_{i}^{'}$ be any $\ell$-coloring of $H$ such that for all $p \in [m]$, $\alpha_{i}^{'}(Q_p) \subseteq \alpha_{i}(S_p)$. We prove that there is a path from $\alpha_{i}^{'}$ to some $\ell$-coloring of $H$, say $\alpha_{i+1}^{'}$, in $R_{\ell}(H)$ such that for all $p \in [m]$, $\alpha_{i+1}^{'}(Q_p) \subseteq \alpha_{i+1}(S_p)$. 
    
    Since $\alpha_i$ and $\alpha_{i+1}$ are adjacent in $R_{\ell}(G)$, they differ on a single vertex, say $v$. Without loss of generality, let $v\in S_j$ for some $j\in [m]$. Note that there are no vertices in the closed neighborhood of $v$ colored $\alpha_{i+1}(v)$ under the coloring $\alpha_i$ of $G$. Thus, there are no vertices in the closed neighborhood of any vertex in $Q_j$ colored $\alpha_{i+1}(v)$ under the coloring $\alpha_i^{'}$ of $H$. Let $\alpha_{i+1}^{'}$ be the coloring obtained from $\alpha_{i}^{'}$ by recoloring the vertex in $Q_j$ colored $\alpha_i(v)$ with the color $\alpha_{i+1}(v)$. It follows that there is a path between $\alpha_{i}^{'}$ and $\alpha_{i+1}^{'}$ in $R_{\ell}(H)$ such that for all $p \in [m]$, $\alpha_{i+1}^{'}(Q_p) \subseteq \alpha_{i+1}(S_p)$. Thus, if there exists a path between $\alpha$ and $\beta$ in $R_{\ell}(G)$, then there exists a path from $\alpha^{'}$ to some coloring $\beta^{*}$ in $R_{\ell}(H)$ such that for all $p\in [m]$, $\beta^{*}(Q_p)\subseteq \beta(S_p)$. Since $\beta^{*}$ and $\beta^{'}$ are two $\chi$-colorings of $H$ such that for all $p\in [m]$, $\beta^{*}(Q_p)\subseteq \beta(S_p)$ and $\beta^{'}(Q_p)\subseteq \beta(S_p)$, by Lemma \ref{homo1A} there is a path between $\beta^{*}$ and $\beta^{'}$ in $R_{\ell}(H)$. Therefore, there is a path between $\alpha^{'}$ and $\beta^{'}$ in $R_{\ell}(H)$.
\end{proof}


\begin{lemma}\label{homo3}
    Let $G$ be a graph such that $V(G)$ can be partitioned into maximal modules $S_1$,\dots, $S_m$. Assume every proper induced subgraph of $G$ is recolorable. Let $H$ = $G(Q_1,\dots,Q_m)$ be the clique skeleton of $G$. Let $\ell\geq \chi(G)$+1. Let $\alpha$ and $\beta$ be two $\chi$-colorings of $G$. Let $\alpha^{'}$ and $\beta^{'}$ be two $\chi$-colorings of $H$ such that for all $p\in [m]$, $\alpha^{'}(Q_p)\subseteq \alpha(S_p)$ and $\beta^{'}(Q_p)\subseteq \beta(S_p)$. Then there exists a path between $\alpha$ and $\beta$ in $R_{\ell}(G)$ if and only if there exists a path between $\alpha^{'}$ and $\beta^{'}$ in $R_{\ell}(H)$.
\end{lemma}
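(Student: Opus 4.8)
The plan is to prove the two directions separately, in both cases passing through the clique skeleton $H$ of $G$ and exploiting the close correspondence between colorings of $G$ and colorings of $H$ that respect the partition into maximal modules. The forward direction is essentially immediate from Lemma \ref{homo2}: given a path between $\alpha$ and $\beta$ in $R_{\ell}(G)$, Lemma \ref{homo2} (applied with $\alpha^{'}$ playing the role of the $\ell$-coloring $\alpha^{'}$ and $\beta^{'}$ the role of the $\chi$-coloring $\beta^{'}$, both of which satisfy the required containments $\alpha^{'}(Q_p)\subseteq\alpha(S_p)$ and $\beta^{'}(Q_p)\subseteq\beta(S_p)$) directly yields a path between $\alpha^{'}$ and $\beta^{'}$ in $R_{\ell}(H)$. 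Note that $\chi(G)=\chi(H)$, as established in the paragraph preceding Figure \ref{fig:moddecomp}, so $\ell\ge\chi(H)+1$ and the hypothesis of Lemma \ref{homo2} is met.

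For the reverse direction, suppose there is a path $\mathbb{Q}$ between $\alpha^{'}$ and $\beta^{'}$ in $R_{\ell}(H)$. The idea is to lift this path, one recoloring step at a time, to a walk in $R_{\ell}(G)$ between a coloring of $G$ reachable from $\alpha$ and a coloring reachable from $\beta$. Concretely, for each coloring $\delta$ appearing on $\mathbb{Q}$ I would maintain an $\ell$-coloring $\hat\delta$ of $G$ with $\delta(Q_p)\subseteq\hat\delta(S_p)$ for all $p$. When consecutive colorings $\delta,\delta'$ on $\mathbb{Q}$ differ on a single vertex lying in some $Q_j$ — say the color on that vertex changes from $a$ to $b$ — the vertex $b$ does not appear in the closed neighborhood of $Q_j$ under $\delta$, hence $b$ does not appear in the closed neighborhood of $S_j$ under $\hat\delta$. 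Since $G[S_j]$ is recolorable (it is a proper induced subgraph of $G$; note $m\ge 2$ here because $G$ is neither a join nor a disjoint union, so each $S_p$ is proper) and we have at least one spare color available — as $\hat\delta$ is an $\ell$-coloring with $\ell\ge\chi(G)+1$ — I would argue that in $R_{\ell}(G)$ one can move from $\hat\delta$ to some $\hat\delta'$ with $\delta'(Q_p)\subseteq\hat\delta'(S_p)$ for all $p$, changing only colors inside $S_j$: first recolor within $S_j$ so that exactly one vertex of $S_j$ carries color $a$ and no other vertex of $S_j$ or its neighborhood carries $b$, then recolor that one vertex to $b$. (If $a$ already does not appear on $S_j$ under $\delta'$, instead remove $a$ from $\hat\delta$ restricted to $S_j$; if $b$ must newly appear, introduce it on a single vertex.) Chaining these moves along $\mathbb{Q}$ produces a path in $R_{\ell}(G)$ from $\hat{\alpha^{'}}$ to some $\hat{\beta^{'}}$, where $\alpha^{'}(Q_p)\subseteq\hat{\alpha^{'}}(S_p)$ and $\beta^{'}(Q_p)\subseteq\hat{\beta^{'}}(S_p)$.

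It remains to connect $\alpha$ to $\hat{\alpha^{'}}$ and $\beta$ to $\hat{\beta^{'}}$ in $R_{\ell}(G)$. This is where Lemma \ref{homo1} does the work, but one must be slightly careful: $\hat{\alpha^{'}}$ need not be a $\chi$-coloring, and Lemma \ref{homo1} is stated for two $\chi$-colorings with a containment between their module color sets. I would handle this by choosing the lift more carefully — take $\hat{\alpha^{'}}$ to be an $\ell$-coloring of $G$ with $\hat{\alpha^{'}}(S_p)=\alpha^{'}(Q_p)\subseteq\alpha(S_p)$, so in particular $\hat{\alpha^{'}}$ uses only colors that $\alpha$ uses, and apply the proof technique of Lemma \ref{homo1}: since $\alpha$ is a $\chi$-coloring there is a color $c\notin\alpha(V(G))$, this color is also absent under $\hat{\alpha^{'}}$, and recolorability of each $G[S_p]$ lets us move module by module from $\hat{\alpha^{'}}$ to $\alpha$ using $c$ as a spare, leaving other modules untouched. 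The same argument connects $\hat{\beta^{'}}$ to $\beta$. Concatenating the three pieces gives a path between $\alpha$ and $\beta$ in $R_{\ell}(G)$.

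The main obstacle I anticipate is the bookkeeping in the lifting step of the reverse direction: a single recoloring move inside some $Q_j$ in $H$ can correspond to a multi-step reshuffling inside $S_j$ in $G$ (because $|S_j|$ may exceed $|Q_j|=k_j$, and a color may need to be both introduced and removed from $S_j$), and one has to verify that throughout this reshuffling no conflict arises with the fixed colors on vertices outside $S_j$ — which is exactly guaranteed by the module property, since the neighborhood of $S_j$ outside $S_j$ is complete to $S_j$ and so avoids every color in $\hat\delta(S_j)$, and the spare color $c$ is globally unused. Making this "one step up, bounded detour down" correspondence precise, while tracking that the invariant $\delta(Q_p)\subseteq\hat\delta(S_p)$ is preserved for every $p$ simultaneously, is the technical heart of the argument; everything else reduces to Lemmas \ref{homo1}, \ref{homo1A}, and \ref{homo2}.
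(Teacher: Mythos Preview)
Your overall architecture matches the paper's: forward direction via Lemma~\ref{homo2}, reverse direction by lifting the path in $R_\ell(H)$ step by step to $R_\ell(G)$ and then using Lemma~\ref{homo1} to tie the lifted endpoints to $\alpha$ and $\beta$. The forward direction is fine.

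The reverse direction has a genuine gap stemming from your choice of invariant. You maintain only the containment $\delta(Q_p)\subseteq\hat\delta(S_p)$, and then claim that ``the same argument'' that connects $\hat{\alpha'}$ to $\alpha$ also connects $\hat{\beta'}$ to $\beta$. But that argument relied on choosing $\hat{\alpha'}$ with $\hat{\alpha'}(S_p)=\alpha'(Q_p)\subseteq\alpha(S_p)$, so that the spare color $c\notin\alpha(V(G))$ is also absent from $\hat{\alpha'}$. The terminal lift $\hat{\beta'}$ is not chosen freely; it is produced by your lifting procedure, and with only the containment invariant you have no control over $\hat{\beta'}(S_p)\setminus\beta'(Q_p)$. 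In particular $\hat{\beta'}$ need not be a $\chi$-coloring and may well use every color, including any $c\notin\beta(V(G))$, so neither Lemma~\ref{homo1} nor its proof technique applies. Relatedly, your appeal to a ``globally unused'' spare color $c$ during the lifting itself is unjustified: intermediate $\ell$-colorings along the path in $R_\ell(H)$ can use all $\ell$ colors.

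The paper repairs this by maintaining the stronger invariant $\hat\delta(S_p)=\delta(Q_p)$ (equality) throughout the lift. This makes the lifting step far simpler than your reshuffling: when a vertex of $Q_j$ changes from $a$ to $b$, one simply recolors \emph{every} vertex of $S_j$ currently colored $a$ to $b$, one at a time. This is legal because $b$ is absent from the closed neighbourhood of $S_j$ under $\hat\delta$, and it needs neither recolorability of $G[S_j]$ nor any global spare color. With equality preserved, the terminal lift $\gamma$ (your $\hat{\beta'}$) satisfies $\gamma(S_p)=\beta'(Q_p)\subseteq\beta(S_p)$ and is a $\chi$-coloring, so Lemma~\ref{homo1} connects $\gamma$ to $\beta$ directly. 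Switching your invariant from containment to equality closes the gap and, as a bonus, eliminates the detour through recolorability of $G[S_j]$ in the lifting step.
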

\begin{proof}
    Assume the hypotheses. 
    
    First assume that there is a path between $\alpha$ and $\beta$ in $R_{\ell}(G)$. Then by Lemma \ref{homo2}, there is a path between $\alpha^{'}$ and $\beta^{'}$ in $R_{\ell}(H)$. 
    
    Now assume that there is a path between $\alpha^{'}$ and $\beta^{'}$ in $R_{\ell}(H)$. Let $\alpha^{*}$ and $\beta^{*}$ be two $\chi$-colorings of $G$ such that for all $p\in [m]$, $\alpha^{*}(S_p)$ = $\alpha^{'}(Q_p)$ and $\beta^{*}(S_p)$ = $\beta^{'}(Q_p)$. Then since for all $p\in [m]$, $\alpha^{'}(Q_p)\subseteq \alpha(S_p)$ and $\beta^{'}(Q_p)\subseteq \beta(S_p)$, clearly for all $p\in [m]$, $\alpha^{*}(S_p)\subseteq \alpha(S_p)$ and $\beta^{*}(S_p)\subseteq \beta(S_p)$. Thus by Lemma \ref{homo1}, there is a path from $\alpha$ to $\alpha^{*}$ and there is a path from $\beta$ to $\beta^{*}$ in $R_{\ell}(G)$, for any $\ell\geq \chi(G)$+1. We will now prove that there is a path from $\alpha^{*}$ to $\beta^{*}$ in $R_{\ell}(G)$.

    Let $\mathbb{P}$ be a path between $\alpha^{'}$ and $\beta^{'}$ in $R_{\ell}(H)$. Let $\alpha_{i}^{'}$ and $\alpha_{i+1}^{'}$ be two $\ell$-colorings of $H$ which are adjacent in $\mathbb{P}$. Let $\alpha_{i}$ be an $\ell$-coloring of $G$ such that for all $p\in [m]$, $\alpha_{i}(S_p) = \alpha_{i}^{'}(Q_p)$. We prove that there is a path from $\alpha_{i}$ to some $\ell$-coloring of $G$, say $\alpha_{i+1}$, in $R_{\ell}(G)$, such that $\alpha_{i+1}(S_p) $ = $ \alpha_{i+1}^{'}(Q_p)$, for all $p \in [m]$. Since $\alpha_{i}^{'}$ and $\alpha_{i+1}^{'}$ are adjacent in $R_{\ell}(H)$, they differ on a single vertex. Let $v \in Q_j \subseteq V(H)$, where $\alpha_{i}^{'}(v) \neq \alpha_{i+1}^{'}(v)$. Note that there are no vertices in the closed neighborhood of $v$ colored $\alpha_{i+1}^{'}(v)$ under the coloring $\alpha_i^{'}$ of $H$. Thus there are no vertices in the closed neighborhood of any vertex in $S_j$ colored $\alpha_{i+1}^{'}(v)$ under the coloring $\alpha_i$ of $G$. Let $\alpha_{i+1}$ be an $\ell$-coloring of $G$ obtained from $\alpha_i$ by recoloring every vertex in $S_j$ colored $\alpha_{i}^{'}(v)$ with the color $\alpha_{i+1}^{'}(v)$, one at a time. This corresponds to a path between $\alpha_{i}$ and $\alpha_{i+1}$ in $R_{\ell}(G)$, where $\alpha_{i+1}(S_p) $ = $ \alpha_{i+1}^{'}(Q_p)$ for all $p \in [m]$. Hence, if there is a path between $\alpha^{'}$ and $\beta^{'}$ in $R_{\ell}(H)$, then there is a path from $\alpha^{*}$ to some coloring $\gamma$ in $R_{\ell}(G)$, such that for all $p\in [m]$, $\gamma(S_p)$ = $\beta^{'}(Q_p)$. Since $\beta^{*}$ and $\gamma$ are two $\chi$-colorings of $G$ such that for all $p\in [m]$, $\beta^{*}(S_p)$ = $\gamma(S_p)$, by Lemma \ref{homo1}, there is a path between $\beta^{*}$ and $\gamma$ in $R_{\ell}(G)$. Therefore, there is a path between $\alpha^{*}$ and $\beta^{*}$ in $R_{\ell}(G)$.
\end{proof}

\begin{theorem}\label{moddecomposition}
    Let $G$ be a graph such that $V(G)$ can be partitioned into maximal modules $S_1$,\dots, $S_m$. Let $H$ = $G(Q_1,\dots,Q_m)$ be the clique skeleton of $G$. Let $\ell\geq \chi(G)$+1. If every proper induced subgraph of $G$ is recolorable, then $R_{\ell}(G)$ is connected if and only if $R_{\ell}(H)$ is connected.
\end{theorem}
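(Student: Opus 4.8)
The plan is to prove the two implications separately. Throughout, fix a $\chi$-coloring $\delta$ of $G$ and a $\chi$-coloring $\delta'$ of $H$ obtained from $\delta$ by coloring each clique $Q_p$ with $k_p$ distinct colors chosen from $\delta(S_p)$; this is possible since $\chi(G[S_p])=k_p$, and it gives a proper coloring of $H$ because cliques $Q_p$ and $Q_q$ are complete in $H$ exactly when $S_p$ and $S_q$ are complete in $G$. If $G$ is prime then each $S_p$ is a single vertex, $H=G$, and the statement is trivial, so assume otherwise.

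For the direction ``$R_{\ell}(G)$ connected $\Rightarrow$ $R_{\ell}(H)$ connected'', I would take an arbitrary $\ell$-coloring $\beta$ of $H$ and lift it to an $\ell$-coloring $b$ of $G$ by recoloring each $G[S_p]$ with a $k_p$-coloring using exactly the $k_p$ colors of $\beta(Q_p)$, so that $b(S_p)=\beta(Q_p)$ for all $p$. Since $R_{\ell}(G)$ is connected there is a path between $b$ and $\delta$ in $R_{\ell}(G)$, and Lemma \ref{homo2} --- with $b$ in the role of the $\ell$-coloring of $G$, $\beta$ in the role of its induced $\ell$-coloring of $H$, $\delta$ in the role of the $\chi$-coloring of $G$, and $\delta'$ in the role of its induced $\chi$-coloring of $H$ --- produces a path between $\beta$ and $\delta'$ in $R_{\ell}(H)$. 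Since $\beta$ was arbitrary, every $\ell$-coloring of $H$ is connected to $\delta'$, so $R_{\ell}(H)$ is connected; note that this direction does not use the recolorability hypothesis.

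For the converse I would show that every $\ell$-coloring $\alpha$ of $G$ is joined in $R_{\ell}(G)$ to a $\chi$-coloring $\delta^{*}$ of $G$ with $\delta^{*}(S_p)=\delta'(Q_p)$ for every $p$. First I would reduce $\alpha$ to a \emph{tight} coloring: fix a coloring $\alpha'$ of $H$ with $\alpha'(Q_p)\subseteq\alpha(S_p)$ for all $p$, and process the modules one at a time; for a module with $|\alpha(S_p)|>k_p$, the vertices outside $S_p$ adjacent to $S_p$ use at most $\ell-|\alpha(S_p)|\le\ell-k_p-1$ colors, none of them in $\alpha(S_p)$ since $S_p$ is a module, so $G[S_p]$ has at least $k_p+1$ available colors, and since $G[S_p]$ is recolorable we may recolor $S_p$ --- without touching any other vertex of $G$ and keeping a proper coloring of $G$ throughout --- to a $k_p$-coloring using exactly the colors $\alpha'(Q_p)$ (modules with $|\alpha(S_p)|=k_p$ already satisfy $\alpha(S_p)=\alpha'(Q_p)$ and are left untouched). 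This yields an $\ell$-coloring $\alpha^{*}$, reachable from $\alpha$ in $R_{\ell}(G)$, with $\alpha^{*}(S_p)=\alpha'(Q_p)$ for all $p$. Next, since $R_{\ell}(H)$ is connected there is a path $\mathbb{P}'$ between $\alpha'$ and $\delta'$ in $R_{\ell}(H)$; lifting $\mathbb{P}'$ step by step exactly as in the second half of the proof of Lemma \ref{homo3} (at each step, if consecutive colorings of $\mathbb{P}'$ differ on a vertex $w\in Q_j$, recolor in $G$, one vertex at a time, every vertex of $S_j$ carrying the old color of $w$ to its new color --- legal because that color is absent from the closed neighborhood of $w$ in $H$ and hence from the closed neighborhood of $S_j$ in $G$) produces a path in $R_{\ell}(G)$ from $\alpha^{*}$ to an $\ell$-coloring $\delta^{*}$ with $\delta^{*}(S_p)=\delta'(Q_p)$ for all $p$. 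Since $\delta^{*}(V(G))=\bigcup_{p}\delta'(Q_p)\subseteq\bigcup_{p}\delta(S_p)=\delta(V(G))$, the coloring $\delta^{*}$ uses at most $\chi(G)$ colors, so it is a $\chi$-coloring. Finally, any two $\chi$-colorings of $G$ that agree with $\delta'$ on the palette of every $S_p$ are connected in $R_{\ell}(G)$ by Lemma \ref{homo1}; hence all $\ell$-colorings of $G$ lie in a single component and $R_{\ell}(G)$ is connected.

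I expect the main obstacle to be the tight-coloring reduction: one must verify that whenever $|\alpha(S_p)|>k_p$ there really are at least $k_p+1$ colors free for $S_p$ (this uses the module property crucially) and order the recolorings so that treating one module never spoils another, observing that a module with $|\alpha(S_p)|>k_p$ retains that property until it is processed. The other ingredients are either direct (Lemma \ref{homo2} for the first implication) or carry over from the proof of Lemma \ref{homo3} (lifting a path in $R_{\ell}(H)$ to one in $R_{\ell}(G)$), though in the latter one should recheck the legality of each recoloring because here the colorings involved need not be $\chi$-colorings.
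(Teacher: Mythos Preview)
Your proposal is correct and follows essentially the same route as the paper's proof: the forward direction is handled via Lemma~\ref{homo2} after lifting an $\ell$-coloring of $H$ to $G$, and the reverse direction first tightens an arbitrary $\ell$-coloring of $G$ so that each module uses exactly $k_p$ colors (using recolorability of $G[S_p]$), then lifts a path in $R_\ell(H)$ step by step to $R_\ell(G)$ as in the proof of Lemma~\ref{homo3}. Your treatment of the tightening step is in fact more explicit than the paper's (which simply asserts ``since $G[S_p]$ is recolorable, recolor\dots''), and your observation that the forward direction does not require the recolorability hypothesis is correct.
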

\begin{proof}
    Assume the hypotheses. 
    
    We first prove that, for any $\ell\geq \chi(G)$+1, if $R_{\ell}(G)$ is connected, then $R_{\ell}(H)$ is connected. Let $\alpha^{'}$ be any $\ell$-coloring of $H$ and let $\beta^{'}$ be a $\chi$-coloring of $H$. Let $\alpha$ be an $\ell$-coloring of $G$ and let $\beta$ be a $\chi$-coloring of $G$, such that for all $p\in [m]$, $\alpha(S_p)$ = $\alpha^{'}(Q_p)$ and $\beta(S_p)$ = $\beta^{'}(Q_p)$. Since $R_{\ell}(G)$ is connected, there is a path between $\alpha$ and $\beta$ in $R_{\ell}(G)$. Then by Lemma \ref{homo2}, there is a path between $\alpha^{'}$ and $\beta^{'}$ in $R_{\ell}(H)$. Therefore, $R_{\ell}(H)$ is connected.

    We now prove that, for any $\ell\geq \chi(G)$+1, if $R_{\ell}(H)$ is connected, then $R_{\ell}(G)$ is connected. Note that for each $\chi$-coloring $\mu$ of $G$ there is a $\chi$-coloring $\mu^{'}$ of $H$ such that for all $p\in [m]$, $\mu^{'}(Q_p)\subseteq \mu(S_p)$. Similarly, for each $\chi$-coloring $\mu^{'}$ of $H$ there is a $\chi$-coloring $\mu$ of $G$ such that for all $p\in [m]$, $\mu(S_p)$ = $\mu^{'}(Q_p)$. Hence, if $R_{\ell}(H)$ is connected, then by Lemma \ref{homo3}, there is a path between any two $\chi$-colorings of $G$ in $R_{\ell}(G)$. Thus, to prove that $R_{\ell}(G)$ is connected, it is sufficient to prove that given any $\ell$-coloring of $G$, we can reach a $\chi$-coloring in $R_{\ell}(G)$.

     Let $\alpha$ be an $\ell$-coloring of $G$. Since for all $p \in [m]$, $G[S_p]$ is recolorable, recolor the vertices in each $S_p$, one at a time, to use at most $\chi(G[S_p])$ colors without changing the colors on vertices in $S_j$, $j\neq p$, to obtain a coloring $\beta$ of $G$. Let $\beta^{'}$ be an $\ell$-coloring of $H$ such that for all $p \in [m]$, $\beta^{'}(Q_p) = \beta(S_p)$. Let $\gamma^{'}$ be a $\chi$-coloring of $H$. Since $R_{\ell}(H)$ is connected, there is a path $\mathbb{P}$ between $\beta^{'}$ and $\gamma^{'}$ in $R_{\ell}(H)$. Let $\alpha_{i}^{'}$ and $\alpha_{i+1}^{'}$ be two $\ell$-colorings of $H$ which are adjacent in the path $\mathbb{P}$. Let $\alpha_{i}$ be an $\ell$-coloring of $G$ such that for all $p \in [m]$, $\alpha_{i}(S_p) = \alpha_{i}^{'}(Q_p)$. We prove that there is a path from $\alpha_{i}$ to some $\ell$-coloring of $G$, say $\alpha_{i+1}$, in $R_{\ell}(G)$, such that for all $p \in [m]$, $\alpha_{i+1}(S_p) $ = $ \alpha_{i+1}^{'}(Q_p)$. Since $\alpha_{i}^{'}$ and $\alpha_{i+1}^{'}$ are adjacent in $R_{\ell}(H)$, they differ on a single vertex, say $v$. Let $v \in Q_j$, for some $j\in [m]$. Note that there are no vertices in the closed neighborhood of $v$ colored $\alpha_{i+1}^{'}(v)$ under the coloring $\alpha_{i}^{'}$ of $H$. Thus there are no vertices in the closed neighborhood of any vertex in $S_j$ colored $\alpha_{i+1}^{'}(v)$ under the coloring $\alpha_i$ of $G$. Let $\alpha_{i+1}$ be an $\ell$-coloring of $G$ obtained from $\alpha_i$ by recoloring every vertex in $S_j$ colored $\alpha_{i}^{'}(v)$, one at a time, with the color $\alpha_{i+1}^{'}(v)$. This corresponds to a path between $\alpha_{i}$ and $\alpha_{i+1}$ in $R_{\ell}(G)$, where $\alpha_{i+1}(S_p) = \alpha_{i+1}^{'}(Q_p)$ for all $p \in [m]$. Hence, if there is a path from $\beta^{'}$ to $\gamma^{'}$ in $R_{\ell}(H)$, then we can recolor the vertices of $G$ from any $\ell$-coloring $\alpha$ of $G$ to some $\chi$-coloring of $G$.
\end{proof}

The results below are well-known, for example see \cite{manuscript, biedl2021}.

\begin{lemma}\label{join}
    If $G$ is a join of two recolorable graphs $G_1$ and $G_2$, then $G$ is recolorable.
\end{lemma}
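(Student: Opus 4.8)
The plan is to reduce both given colorings to a common "canonical" form and then exploit the recolorability of the two halves separately. Write $G=G_1\ast G_2$, put $a=\chi(G_1)$, $b=\chi(G_2)$, so $\chi(G)=a+b$, and fix an arbitrary $\ell\ge a+b+1$. The only tool I would need is the following remark: in $G$ every vertex of $G_1$ is adjacent to every vertex of $G_2$, so in any $\ell$-coloring of $G$, if $V(G_2)$ carries the color set $D$, then recoloring \emph{only} the vertices of $G_1$ (leaving the coloring of $G_2$ untouched) is exactly the same as recoloring $G_1$ with the palette $\{1,\dots,\ell\}\setminus D$, and symmetrically with the roles of $G_1$ and $G_2$ swapped. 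Consequently, whenever $V(G_2)$ uses at most $\ell-(a+1)$ colors, the palette available to $G_1$ has at least $a+1=\chi(G_1)+1$ colors, so, since $R_m(G_1)$ is connected for all $m\ge\chi(G_1)+1$, the coloring of $G_1$ can be transformed, keeping $G_2$ fixed, into \emph{any} prescribed proper coloring of $G_1$ that uses only colors from that palette; and symmetrically for $G_2$.

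Call an $\ell$-coloring of $G$ \emph{tight} if $V(G_1)$ uses exactly $a$ colors and $V(G_2)$ uses exactly $b$ colors. First I would show that every $\ell$-coloring $\alpha$ of $G$ can be transformed in $R_\ell(G)$ into a tight coloring: if $V(G_1)$ uses more than $a$ colors, then $V(G_2)$ uses at most $\ell-(a+1)$ colors, so by the tool we may recolor $G_1$ into a proper coloring using exactly $a=\chi(G_1)$ colors; after that $V(G_2)$ sees only $a$ forbidden colors, so its palette has size $\ell-a\ge b+1$ and it may likewise be brought down to exactly $b$ colors. Applying this to $\alpha$ and to the target coloring $\beta$ reduces the whole problem to connecting two tight colorings $\alpha^{\circ}$ and $\beta^{\circ}$ in $R_\ell(G)$.

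Let $E=\beta^{\circ}(V(G_1))$, a set of $a$ colors. The heart of the argument, and the step I expect to be the main obstacle, is to transform $\alpha^{\circ}$ in $R_\ell(G)$ into a tight coloring in which $V(G_1)$ uses exactly $E$. I would do this by induction on $t=|E\cap\gamma(V(G_2))|$, the ``conflict'' of the current tight coloring $\gamma$. If $t=0$ then $E$ is disjoint from the $b$ colors on $V(G_2)$, so one application of the tool (recolor $G_1$, fixing $G_2$) does it. If $t\ge1$, pick a conflicting color $e\in E\cap\gamma(V(G_2))$; I would first recolor $G_1$ so that it uses $(E\setminus\gamma(V(G_2)))\cup Z$, where $Z$ is a $t$-element set of colors currently avoided by $V(G_2)$ and disjoint from $E$ (such $Z$ exists because $\ell\ge a+b+1$), and then recolor $G_2$ so that it uses $(E\cap\gamma(V(G_2)))\setminus\{e\}$ together with $b-t+1$ further colors lying outside $E$ and outside the new color set of $V(G_1)$. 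Both recolorings are legitimate by the tool, and the point that makes the second one possible is a counting check that uses $\ell\ge a+b+1$ exactly: after the first step $V(G_1)$ blocks $a$ colors and $V(G_2)$ must in addition avoid $E$ and $\{e\}$, which (since the new $V(G_1)$ color set meets $E$ in exactly $a-t$ colors) leaves $\ell-(a+t)\ge b-t+1$ free colors, just enough. After these two moves the conflict has dropped to $t-1$, so induction finishes this step.

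Once $\alpha^{\circ}$ has reached a tight coloring $\alpha_1$ with $\alpha_1(V(G_1))=E=\beta^{\circ}(V(G_1))$, two more applications of the tool finish the proof: keeping $G_1$ fixed (its $a$ colors leave $V(G_2)$ a palette of size $\ell-a\ge b+1$ that contains $\beta^{\circ}(V(G_2))$) recolor $G_2$ to agree with $\beta^{\circ}$ on $V(G_2)$; then keeping $G_2$ fixed (its $b$ colors leave $V(G_1)$ a palette of size $\ell-b\ge a+1$ that contains $E$) recolor $G_1$ to agree with $\beta^{\circ}$ on $V(G_1)$. This reaches $\beta^{\circ}$, and reversing the reduction of $\beta$ yields a path from $\alpha$ to $\beta$ in $R_\ell(G)$; since $\alpha,\beta$ and $\ell\ge\chi(G)+1$ were arbitrary, $G$ is recolorable. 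The only delicate part is the inductive ``dance'' of the previous paragraph; everything else is bookkeeping around the single tool.
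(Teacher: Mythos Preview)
The paper does not actually prove Lemma~\ref{join}; it simply cites it as well-known (references~\cite{manuscript,biedl2021}) and moves on. So there is no in-paper argument to compare against, and your task reduces to whether your proof stands on its own.

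It does. Your ``tool'' is correct: fixing one side of the join and recoloring the other is exactly reconfiguration of that side over the residual palette, and recolorability of $G_i$ gives connectedness whenever that palette has size at least $\chi(G_i)+1$. The reduction to tight colorings is fine (you should also mention the trivial case $|\alpha(V(G_1))|=a$ but $|\alpha(V(G_2))|>b$, though it is handled by the second half of your argument alone). The inductive ``dance'' is the nontrivial part, and your counting is right: after moving $G_1$ onto $(E\setminus\gamma(V(G_2)))\cup Z$ you block $a+t$ colors from $G_2$'s point of view (the new $G_1$-set together with the rest of $E$), leaving $\ell-(a+t)\ge b-t+1$ free colors, exactly enough to drop the conflict by one. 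The finishing steps are routine.

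That said, your induction on the conflict $t$ is more elaborate than necessary. The standard shortcut is a \emph{renaming lemma}: for any graph $H$ and any $\ell\ge\chi(H)+1$, two $\ell$-colorings that differ only by a permutation of the color names are connected in $R_\ell(H)$ (swap two color classes using the spare color, and compose transpositions). Applying this once to a tight coloring lets you move the color \emph{set} on $V(G_1)$ directly to $\{1,\dots,a\}$ and the set on $V(G_2)$ to $\{a+1,\dots,a+b\}$, after which two applications of your tool finish immediately. This replaces the whole inductive paragraph by a single global permutation step; your approach trades that lemma for an explicit local argument, which is longer but entirely self-contained.
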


\begin{lemma}\label{independent}
Let $G$ be a graph with non-adjacent vertices $u$ and $v$, such that $N(u)\subseteq N(v)$. Then either\\
(i) ~$G$ is recolorable or\\
(ii) $G$-$u$ is not recolorable.
\end{lemma}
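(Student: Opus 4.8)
The statement is equivalent to the implication: if $G-u$ is recolorable, then $G$ is recolorable; the plan is to prove this. I would first record that $\chi(G)=\chi(G-u)$. One inequality is trivial, and for the other, any $\chi(G-u)$-coloring of $G-u$ extends to $G$ by giving $u$ the colour of $v$, which is proper since $N(u)\subseteq N(v)$ and $u\not\sim v$. Hence, fixing $\ell\ge\chi(G)+1=\chi(G-u)+1$, the hypothesis gives that $R_{\ell}(G-u)$ is connected, and the goal is to show that $R_{\ell}(G)$ is connected.

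The key observation is that from \emph{any} $\ell$-coloring $\alpha$ of $G$ one can recolor $u$ to the colour $\alpha(v)$ in a single step (or in zero steps when $\alpha(u)=\alpha(v)$): the colour $\alpha(v)$ occurs on no vertex of $N(u)\subseteq N(v)$, and $u$ is non-adjacent to $v$. Call an $\ell$-coloring $\gamma$ of $G$ \emph{aligned} if $\gamma(u)=\gamma(v)$. Then every $\ell$-coloring of $G$ reaches an aligned one in $R_{\ell}(G)$ in at most one step, so it suffices to prove that any two aligned $\ell$-colorings of $G$ are reachable from each other in $R_{\ell}(G)$.

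To do this I would transfer paths from $G-u$ to $G$. Restriction to $V(G)\setminus\{u\}$ is a bijection from the aligned $\ell$-colorings of $G$ onto the $\ell$-colorings of $G-u$, with inverse ``extend by setting the colour of $u$ equal to that of $v$''. Given aligned colorings $\alpha'$ and $\beta'$, choose a path $\delta_0,\dots,\delta_t$ between their restrictions in $R_{\ell}(G-u)$ and lift each $\delta_i$ to the aligned coloring $\widehat{\delta_i}$ of $G$. If consecutive $\delta_i,\delta_{i+1}$ differ on a vertex $w\neq v$, then $\widehat{\delta_i}$ and $\widehat{\delta_{i+1}}$ differ only on $w$ and are adjacent in $R_{\ell}(G)$. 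If they differ on $v$, then $\widehat{\delta_i}$ and $\widehat{\delta_{i+1}}$ differ exactly on $\{u,v\}$, and I would join them by a walk of length two: first recolor $v$ to its new colour, which is absent from $N(v)=N_{G-u}(v)$ because $\delta_{i+1}$ is proper; then recolor $u$ to that same colour, which is legal because $N(u)\subseteq N(v)$ and that colour is still absent from $N(v)$. Concatenating over $i$ joins $\alpha'$ to $\beta'$, completing the proof.

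I do not expect a real obstacle. The only step needing care is the case $w=v$: there the two lifted colorings disagree on two vertices, and one must check---using only $N(u)\subseteq N(v)$, $u\not\sim v$, and $\ell\ge\chi(G)+1$---that this discrepancy is always removable by a short legal walk in $R_{\ell}(G)$. Everything else is bookkeeping with the bijection between aligned colorings of $G$ and colorings of $G-u$, together with routine checks that each single recoloring is legal.
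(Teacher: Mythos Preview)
Your argument is correct. The paper does not actually give its own proof of this lemma; it simply records it as ``well-known'' and cites \cite{manuscript, biedl2021}. Your approach---showing $\chi(G)=\chi(G-u)$, reducing to aligned colorings via a single recoloring of $u$, and lifting a path in $R_\ell(G-u)$ step by step, inserting a two-step detour whenever the path moves $v$---is exactly the standard proof one finds in those references. One small remark: in your final paragraph you list $\ell\ge\chi(G)+1$ among the ingredients needed for the two-step walk when $w=v$, but in fact that walk uses only $N(u)\subseteq N(v)$ and $u\not\sim v$; the bound on $\ell$ is used solely to invoke connectedness of $R_\ell(G-u)$.
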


We now prove Theorem \ref{hereditary}.

\begin{proof}[Proof of Theorem \ref{hereditary}]
    Let $\mathcal{G}$ be a hereditary class of graphs and let $\mathcal{H}$ be the set of prime graphs in $\mathcal{G}$. Assume that for all $H\in \mathcal{H}$, every blowup of $H$ is recolorable. If possible, let $G\in \mathcal{G}$ be a vertex-minimal graph that is not recolorable. Then, by Lemma \ref{join}, $G$ is neither a join nor a disjoint union of two graphs. Let $S_1$,\dots, $S_m$ be a partition of $V(G)$ into maximal modules. Let $G^{*}$ be the skeleton of $G$. Then $G^{*}$ is a prime induced subgraph of $G$ and hence is in $\mathcal{H}$. Since $G^{*} \in \mathcal{H}$, from the hypothesis, every blowup of $G^{*}$ is recolorable. Let $H^{*}$ = $G(Q_1,\dots,Q_m)$ be the clique skeleton of $G$. Note that $H^{*}$ is a blowup of $G^{*}$ and hence is recolorable. By the vertex minimality of $G$, every proper induced subgraph of $G$ is recolorable. Therefore, since $H^{*}$ is recolorable, by Theorem \ref{moddecomposition}, $G$ is recolorable, a contradiction. 
 \end{proof}

 Let $\mathcal{G}$ be a hereditary class of graphs. Then Theorem \ref{hereditary} states that to determine if every graph in $\mathcal{G}$ is recolorable, it is sufficient to determine if every blowup of every prime graph in $\mathcal{G}$ is recolorable. We can further ask: To determine if every graph in $\mathcal{G}$ is recolorable, is it sufficient to determine if every prime graph in $\mathcal{G}$ is recolorable? We answer this in the positive for $2K_2$-free graphs and diamond-free graphs.

 \begin{theorem}
     Let $\mathcal{G}$ be a hereditary class of graphs such that every $G\in \mathcal{G}$ is 2$K_2$-free. If every prime graph in $\mathcal{G}$ is recolorable, then every graph in $\mathcal{G}$ is recolorable.
 \end{theorem}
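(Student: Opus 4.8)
The plan is to combine Theorem~\ref{hereditary} with the special structure of $2K_2$-free graphs, so that the "blowup" in that theorem can be replaced by the prime graph itself. By Theorem~\ref{hereditary}, it suffices to show that every blowup of every prime graph $H\in\mathcal{G}$ is recolorable, given that every prime graph in $\mathcal{G}$ is recolorable. So fix a prime graph $H\in\mathcal{G}$ and let $H'$ be a blowup of $H$, obtained by substituting a non-empty clique $Q_v$ for each vertex $v$ of $H$ (with $|Q_v|=1$ for vertices that are not blown up). Note that $H'$ need not be prime, but it is still $2K_2$-free since $\mathcal{G}$ is hereditary and — crucially — a blowup of a $2K_2$-free graph is $2K_2$-free: substituting a clique for a vertex cannot create an induced $2K_2$, because any two vertices inside a $Q_v$ are adjacent.

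The key step is to peel off the cliques one vertex at a time using Lemma~\ref{independent}. Suppose some $Q_v$ has at least two vertices, and pick distinct $u,w\in Q_v$. Then $u$ and $w$ are adjacent (they lie in a clique), so Lemma~\ref{independent} does not apply directly; instead I would use the standard trick for cliques in a blowup together with the join observation. Actually, the cleaner route is: if $Q_v$ has size $\ge 2$, then within $H'$ the two vertices $u,w\in Q_v$ are true twins (same closed neighborhood). The argument I would run is an induction on $|V(H')|$. If $H'=H$ (no vertex blown up nontrivially) then $H'$ is prime and recolorable by hypothesis, and we are done. Otherwise some $Q_v$ has $\ge 2$ vertices; let $u\in Q_v$ and let $H'' = H'-u$, which is a smaller blowup of $H$, hence recolorable by induction. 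Now I want to lift recolorability from $H''$ to $H'$. Since $u$ has a true twin $w$ remaining in $H'$, consider the vertex $w$: we have $N(u) = N(w)$ in $H'$ after... wait, $u$ and $w$ are adjacent, so this is not the non-adjacent-with-nested-neighborhoods situation of Lemma~\ref{independent}. The right tool here is instead the complement: in $\mathrm{co}\text{-}H'$, the vertices $u$ and $w$ are non-adjacent with equal neighborhoods, but recolorability is not a self-complementary property, so that does not immediately help either.

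The main obstacle, then, is lifting recolorability across the addition of a true-twin vertex inside a clique. The way I expect to handle it is to exploit $2K_2$-freeness more heavily: in a connected $2K_2$-free graph that is not a join, the structure is quite restricted, and in fact one can show that a prime $2K_2$-free graph together with its blowup behave well because $2K_2$-free graphs have the property that a "small" dominating structure controls the colorings. Concretely, I would invoke or reprove the known fact (used implicitly around Lemma~\ref{independent} and in \cite{2K2free}) that adding a true twin to a recolorable graph yields a recolorable graph when the graph is $2K_2$-free — or, alternatively, reduce to Theorem~\ref{hereditary} by showing directly that for $2K_2$-free graphs, the clique skeleton $H^*$ in the proof of Theorem~\ref{hereditary} is recolorable as soon as the skeleton $G^*$ is, by the twin-addition argument applied repeatedly. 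So the real content to fill in is a lemma of the form: \emph{if $G$ is $2K_2$-free and recolorable and $G^+$ is obtained from $G$ by adding a vertex with the same closed neighborhood as some vertex of $G$, then $G^+$ is recolorable}; granting this, repeated application builds any blowup $H'$ from the recolorable prime graph $H$, and Theorem~\ref{hereditary} (or directly Theorem~\ref{moddecomposition}) finishes the proof. I expect this twin-addition lemma to be the crux, and it is plausibly where $2K_2$-freeness is genuinely needed, since for general graphs adding a true twin can destroy recolorability.
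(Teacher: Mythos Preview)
Your proposal does not reach a complete proof: the ``true-twin addition lemma'' you isolate at the end is left entirely unproven, and you offer no argument for why $2K_2$-freeness should make it hold. Since you yourself flag this lemma as the crux, the proof as written is a plan with its central step missing. Moreover, the detour through Theorem~\ref{hereditary} and blowups of prime graphs is what forces you into the true-twin situation, where Lemma~\ref{independent} (which needs \emph{non}-adjacent vertices with nested neighbourhoods) is unavailable; this is exactly the obstacle you ran into and could not get past.

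The paper's proof avoids this detour altogether and never invokes Theorem~\ref{hereditary}. It takes a vertex-minimal non-recolorable $G\in\mathcal{G}$, which must be non-prime, and picks a non-trivial module $S$. If $S$ is independent, any two of its vertices are non-adjacent with equal neighbourhoods and Lemma~\ref{independent} contradicts minimality. If $S$ contains an edge $uv$, then either $S$ is complete to $V(G)\setminus S$ (a join, handled by Lemma~\ref{join}), or there is some $x\notin S$ non-adjacent to all of $S$. Now comes the place where $2K_2$-freeness is actually used: if $N(x)\nsubseteq N(v)$, pick $y\in N(x)\setminus N(v)$; then $y\notin S$ and, since $S$ is a module, $y$ is non-adjacent to all of $S$, so $\{u,v,x,y\}$ induces a $2K_2$, a contradiction. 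Hence $N(x)\subseteq N(v)$ with $x,v$ non-adjacent, and Lemma~\ref{independent} applies (to $x$, not to a true twin), again contradicting minimality. The missing idea in your attempt is precisely this: $2K_2$-freeness is not used to make true twins harmless, but to guarantee, whenever the module contains an edge and is not universal, a vertex \emph{outside} the module whose neighbourhood is nested in that of a module vertex.
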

 \begin{proof}
     Assume the hypothesis. If possible, let $G\in \mathcal{G}$ be a vertex-minimal graph that is not recolorable. Then $G$ is not prime. Since $G$ is not prime, it contains a maximal module with at least two vertices, say $S$. Let $u,v \in S$. If $S$ is an independent set, then $N(u)$ = $N(v)$. Then by Lemma \ref{independent}, $G$ is not vertex-minimal, a contradiction. So $S$ must contain an edge. Without loss of generality, let $uv \in E(G)$. If $S$ is complete to $V(G)\setminus S$, then $G$ is a join of $G[S]$ and $G$-$S$. Then by Lemma \ref{join}, $G$ is not vertex-minimal, a contradiction. Hence there is a vertex $x\in V(G)\setminus S$ that is non-adjacent to some vertex in $S$. Since $S$ is a module, $x$ is non-adjacent to every vertex in $S$. If $N(x)\subseteq N(v)$, then by Lemma \ref{independent}, $G$ is not vertex-minimal, a contradiction. Thus there is a vertex $y\in N(x)\setminus N(v)$. Then $y\in V(G)\setminus S$ since $x$ is non-adjacent to every vertex in $S$. Since $S$ is a module and $y$ is non-adjacent to $v\in S$, $y$ is non-adjacent to every vertex in $S$. Then $\{u,v,x,y\}$ induces a 2$K_2$, a contradiction. 
 \end{proof}

\begin{lemma}[\cite{manuscript}]\label{degree}
Let $G$ contain a vertex $v$ of degree at most $\chi(G)$-1. Then, either\\
(i) ~$G$ is recolorable or\\
(ii) $G$-$v$ is not recolorable.
\end{lemma}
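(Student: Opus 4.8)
The plan is to prove the equivalent statement that if $G-v$ is recolorable then $G$ is recolorable; granting this, for any $G$ satisfying the hypothesis, either $G-v$ is recolorable, so (i) holds, or it is not, so (ii) holds. So assume $G-v$ is recolorable. Since $\chi(G-v)\le\chi(G)$, this gives that $R_\ell(G-v)$ is connected for every $\ell\ge\chi(G)+1$. Fix such an $\ell$. Because $\deg(v)\le\chi(G)-1\le\ell-2$, in any $\ell$-coloring of $G$ the neighbours of $v$ occupy at most $\ell-2$ colors, so at least two colors are free at $v$; this slack is the engine of the argument. It suffices to join two arbitrary $\ell$-colorings $\alpha,\beta$ of $G$ by a path in $R_\ell(G)$.

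First I would restrict $\alpha$ and $\beta$ to $G-v$, obtaining $\ell$-colorings $\alpha',\beta'$ of $G-v$, and take a path $\gamma_0'=\alpha',\gamma_1',\dots,\gamma_t'=\beta'$ in $R_\ell(G-v)$. The heart of the proof is to \emph{lift} this path to $R_\ell(G)$ one move at a time, building $\ell$-colorings $\gamma_0=\alpha,\gamma_1,\dots,\gamma_t$ of $G$ with $\gamma_i$ restricted to $V(G-v)$ equal to $\gamma_i'$. Suppose $\gamma_i'$ and $\gamma_{i+1}'$ differ only at a vertex $u$, and write $c=\gamma_{i+1}'(u)$. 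If $u\notin N(v)$, or if $u\in N(v)$ but $\gamma_i(v)\ne c$, then recoloring $u$ to $c$ is a legal move in $R_\ell(G)$ and yields $\gamma_{i+1}$ with $\gamma_{i+1}(v)=\gamma_i(v)$. In the remaining case $u\in N(v)$ and $\gamma_i(v)=c$, I would recolor $v$ first: since $\gamma_i$ is proper and $v$ has color $c$, no neighbour of $v$ has color $c$, so $\gamma_i(N(v))\cup\{c\}$ has size at most $(\ell-2)+1=\ell-1$; choosing a color $c'$ outside this set, recolor $v$ to $c'$ (legal, as $c'\notin\gamma_i(N(v))$) and then recolor $u$ to $c$ (legal, as the neighbours of $u$ in $G-v$ avoid $c$ and $v$ now carries $c'\ne c$). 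This produces the required $\gamma_{i+1}$.

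After the $t$-th step we reach $\gamma_t$, which agrees with $\beta$ everywhere except possibly at $v$; if $\gamma_t(v)\ne\beta(v)$, one further move recoloring $v$ to $\beta(v)$ is legal, because $\gamma_t(N(v))=\beta(N(v))$ does not contain $\beta(v)$, and it lands on $\beta$. Hence $\alpha$ and $\beta$ lie in the same component of $R_\ell(G)$; as $\alpha$, $\beta$, and $\ell\ge\chi(G)+1$ were arbitrary, $G$ is recolorable. The one delicate point is the single-step lift when $u\in N(v)$ and $\gamma_i(v)=c$, and it rests entirely on the counting observation that $v$ has degree at most $\ell-2$ and that its own color is automatically excluded from its neighbourhood, leaving a free color to move $v$ out of the way before recoloring $u$; the rest is routine bookkeeping about which single-vertex recolorings stay proper.
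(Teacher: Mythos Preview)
The paper does not give its own proof of this lemma; it is quoted from \cite{manuscript} and used as a black box. Your argument is correct and is the standard one: lift a recoloring path in $R_\ell(G-v)$ step by step to $R_\ell(G)$, using the degree bound $\deg(v)\le \ell-2$ to guarantee a spare color for $v$ whenever the next move in $G-v$ would create a conflict at $v$, and finish with at most one extra recoloring of $v$. The only places to be careful are exactly the ones you flagged---the detour when $u\in N(v)$ and $\gamma_i(v)=c$, and the final adjustment of $v$---and you handle both cleanly; note in particular that in the detour case your sequence $\gamma_0,\dots,\gamma_t$ is not itself the path in $R_\ell(G)$ but a subsequence of it, with one intermediate coloring inserted at each detour, which is implicit in your write-up but worth stating explicitly.
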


\begin{theorem}\label{b2k2}
    Let $\mathcal{G}$ be a hereditary class of graphs such that every $G\in \mathcal{G}$ is diamond-free. If every prime graph in $\mathcal{G}$ is recolorable, then every graph in $\mathcal{G}$ is recolorable.
\end{theorem}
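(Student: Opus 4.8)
The plan is to argue by contradiction in the style of the $2K_2$-free case above, this time with Lemma~\ref{degree} as the main new ingredient. Suppose $G\in\mathcal{G}$ is a vertex-minimal graph that is not recolorable. Since $G$ is recolorable if and only if each of its components is, and since a join of two recolorable graphs is recolorable by Lemma~\ref{join}, minimality forces $G$ to be connected and to be neither a join nor a disjoint union of two graphs; and the hypothesis forces $G$ to be non-prime (a prime graph in $\mathcal{G}$ would be recolorable). Hence $V(G)$ partitions into maximal modules $S_1,\dots,S_m$ with $m\ge 2$, the skeleton $G^{*}$ is connected, and some module, which I call $S_p$, has $|S_p|\ge 2$. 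If $G[S_p]$ were edgeless, then any two vertices $u,v\in S_p$ would be non-adjacent with $N_G(u)=N_G(v)$, so Lemma~\ref{independent} would give that either $G$ is recolorable or $G-u$ is not recolorable, both impossible (the latter by minimality, as $G-u\in\mathcal{G}$). So I may assume $G[S_p]$ contains an edge $uv$.

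Next I would record the structure that diamond-freeness imposes around $S_p$ (writing $s_p$ for the vertex of $G^{*}$ that $S_p$ contracts to, and $N_G(S_p)=\bigcup\{S_q : s_q\in N_{G^{*}}(s_p)\}$ for the set of vertices of $G$ complete to $S_p$, which is non-empty since $G^{*}$ is connected with at least two vertices). I claim three things. \emph{(i)} $N_{G^{*}}(s_p)$ is a clique of $G^{*}$: non-adjacent neighbours $s_q,s_r$ of $s_p$ would yield $x\in S_q$, $y\in S_r$ with $x\not\sim y$ but both adjacent to $u$ and to $v$, so $\{u,v,x,y\}$ would induce a diamond. \emph{(ii)} For each such $q$, $G[S_q]$ is a complete graph: every vertex of $S_p$ (in particular $u$) is adjacent to all of $S_q$, so an induced $P_3$ in $S_q$ together with $u$ would be a diamond, forcing $G[S_q]$ to be a disjoint union of cliques; and two such cliques would provide non-adjacent $x,y$ both adjacent to $u$ and $v$, again a diamond. \emph{(iii)} By the same ``$P_3$ plus an outside vertex'' argument, applied now with any $x\in S_q$ (which is complete to $S_p$), $G[S_p]$ is a disjoint union of cliques; let $C_u$ be the one containing $u$.

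Then I would assemble the pieces. As $S_p$ is a module, the neighbours of $u$ outside $S_p$ are exactly $N_G(S_p)$, so $\{u\}\cup N_G(u)=C_u\cup\bigcup_q S_q$. By \emph{(i)} the modules $S_q$ are pairwise complete, by \emph{(ii)} each is a clique of size $\chi(G[S_q])$, and $C_u$ (being inside $S_p$) is complete to every $S_q$; hence $C_u\cup\bigcup_q S_q$ induces a clique of $G$, of size $|C_u|+\sum_q\chi(G[S_q])$. Therefore $\chi(G)\ge |C_u|+\sum_q\chi(G[S_q])$, while $\deg_G(u)=(|C_u|-1)+\sum_q|S_q|=(|C_u|+\sum_q\chi(G[S_q]))-1\le\chi(G)-1$. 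Applying Lemma~\ref{degree} to $u$ then gives that either $G$ is recolorable, contradicting the choice of $G$, or $G-u$ is not recolorable, contradicting minimality since $G-u\in\mathcal{G}$ has fewer vertices. Either way we reach a contradiction, so every graph in $\mathcal{G}$ is recolorable.

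The step I expect to be the main obstacle is claim \emph{(ii)}. It is tempting to say simply that a module containing an edge in a diamond-free graph must be a clique or an independent set, but that is false in general; the usable statement is the relative one above, that a module $S_q$ which is complete to \emph{another} module $S_p$ containing an edge must induce a complete graph, and it is exactly that edge of $S_p$ — not merely adjacency to $S_q$ — that upgrades ``$G[S_q]$ is $P_3$-free'' to ``$G[S_q]$ is one clique''. Pinning down this dependency, and then checking that $u$ really has its whole neighbourhood inside $C_u\cup\bigcup_q S_q$, is where the care lies; the degree bound and the appeal to Lemma~\ref{degree} are then routine.
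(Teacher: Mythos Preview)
Your proof is correct and follows essentially the same route as the paper's: take a vertex-minimal counterexample, find a non-trivial module, split into the independent case (handled by Lemma~\ref{independent}) and the ``contains an edge'' case, and in the latter show that the closed neighbourhood of a vertex $u$ on such an edge is a clique, giving $\deg(u)\le\chi(G)-1$ and a contradiction via Lemma~\ref{degree}.

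The one noteworthy difference is that the paper does not pass through the maximal-module partition, the skeleton $G^{*}$, or your three claims. It simply fixes \emph{any} non-trivial module $S$ (not necessarily maximal), observes that connectivity gives a vertex $z\notin S$ complete to $S$, so $G[S]$ is $P_3$-free and hence a disjoint union of cliques; then, for adjacent $u,v\in S$, one has $N[u]=N[v]$, and if $N[u]$ were not a clique two non-adjacent neighbours $x,y$ of $u$ would have to lie outside $S$ (any neighbour inside $S$ is in $u$'s clique, and any outside neighbour is complete to $S$), whence $\{u,v,x,y\}$ is a diamond. Your claims (i)--(iii) reprove exactly this, but localised to the neighbouring maximal modules $S_q$; the paper's single-module argument is shorter because it never needs to know that those $S_q$ are themselves cliques or that the $s_q$ form a clique in $G^{*}$ --- it only needs that $N[u]$ is a clique, which follows directly. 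Either way, the degree bound and the appeal to Lemma~\ref{degree} are identical.
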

\begin{proof}
    Assume the hypothesis. If possible, let $G\in \mathcal{G}$ be a vertex-minimal graph that is not recolorable. Then $G$ is connected and not prime. Since $G$ is not prime, it contains a non-trivial module, say $S$, with at least two vertices.
    
    Since $G$ is connected, there exists a vertex $z\in V(G)\setminus S$ adjacent to every vertex in $S$. If $G[S]$ contains an induced $P_3$, induced by, say, the set $P\subseteq S$, then $P\cup \{z\}$ induces a diamond in $G$, a contradiction. So $G[S]$ is $P_3$-free and hence $G[S]$ is a disjoint union of cliques.
    
    If there are adjacent vertices $u$ and $v$ in $S$, then we claim that $N[u] = N[v]$ is a clique. Since $S$ is a module and a disjoint union of cliques, $N[u] = N[v]$. If $N[u]$ is not a clique, then there exist two non-adjacent vertices $x$ and $y$ in $V(G)\setminus S$ adjacent to both $u$ and $v$, a contradiction to the fact that $G$ is diamond-free. Thus $N[u] = N[v]$ is a clique. Therefore, where $d(u)$ denotes the degree of the vertex $u$ in $G$, $d(u)\le \omega(G)$-1 $\le \chi(G)$-1. By vertex minimality of $G$, the graph $G$-$u$ must be recolorable. Then by Lemma \ref{degree}, $G$ is recolorable, a contradiction.

    If there are no adjacent vertices in $S$, then $S$ is an independent set in $G$. Let $\{u, v\}\subseteq S$. Since $S$ is a module, $N(u)$ = $N(v)$. By vertex minimality of $G$, the graph $G$-$u$ must be recolorable. Then by Lemma \ref{independent}, $G$ is recolorable, a contradiction.
\end{proof}

\section{\texorpdfstring{$P_5$}\ -free graphs}\label{p5free}

In this section, we use the results from previous section to prove recolorability results for several subclasses of $P_5$-free graphs. A \textit{clique cutset} $Q$ of a graph $G$ is a clique in $G$ such that $G$-$Q$ has more components than $G$. A clique cutset $Q$ of a graph $G$ is called a \textit{tight clique cutset} if there exists a component $H$ of $G$-$Q$ which is complete to $Q$.

We use the following results in our proof.
\begin{lemma}[\cite{Belavadi}]\label{tight}
    Suppose $G$ has a tight clique cutset. If every proper induced subgraph of $G$ is recolorable, then $G$ is recolorable.
\end{lemma}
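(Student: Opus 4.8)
The plan is to use the tightness of the cutset to split $G$ along it into two recolorable pieces, one of which is a join, and then stitch recoloring sequences of the two pieces together across the cutset.

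First I would dispose of the disconnected case: a graph is recolorable if and only if each of its components is, and the components are proper induced subgraphs, so the conclusion is immediate. Assume then that $G$ is connected, let $Q$ be a tight clique cutset of $G$, and let $H$ be a component of $G-Q$ that is complete to $Q$. Put $A=V(H)$ and $B=V(G)\setminus(Q\cup A)$; since $Q$ is a cutset of the connected graph $G$ we have $B\neq\emptyset$, and there are no edges of $G$ between $A$ and $B$. Let $G_1=G[A\cup Q]$ and $G_2=G-A$; both are proper induced subgraphs of $G$, hence recolorable by hypothesis. The key structural observation, and the only place tightness is used, is that since $H$ is complete to the clique $Q$, the graph $G_1$ is the join of $H$ with $K_{|Q|}=G[Q]$, so $\chi(G_1)=\chi(H)+|Q|$. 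Because $G_1$ and $G_2$ are induced subgraphs of $G$, for every $\ell\ge\chi(G)+1$ we get $\ell\ge\chi(G_2)+1$ (so $R_\ell(G_2)$ is connected) and $\ell-|Q|\ge\chi(G_1)+1-|Q|=\chi(H)+1$.

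Now fix $\ell\ge\chi(G)+1$ and two $\ell$-colorings $\alpha,\beta$ of $G$. Since $R_\ell(G_2)$ is connected, pick a recoloring sequence in $R_\ell(G_2)$ from $\alpha|_{V(G_2)}$ to $\beta|_{V(G_2)}$, and lift it one step at a time to a recoloring sequence of $G$ that starts at $\alpha$ and whose restriction to $V(G_2)$ always matches the corresponding coloring in the $G_2$-sequence. A step recoloring a vertex of $B$ lifts verbatim, because such a vertex has all of its $G$-neighbours in $B\cup Q=V(G_2)$. For a step that recolors a vertex $v\in Q$ with a new colour $c$: at that moment $c$ does not appear on $Q$, and the vertices of $A$ are coloured from the $(\ell-|Q|)$-element palette of colours missing from $Q$; since $H$ is recolorable and $\ell-|Q|\ge\chi(H)+1$, I can first recolor $H$ within this palette to a colouring of $H$ that avoids $c$ (for instance any $\chi(H)$-colouring, which fits since $\chi(H)\le\ell-|Q|-1$), leaving $Q$ and $B$ untouched, and then recolor $v$ to $c$ — this last step is legal because $A$ now avoids $c$, $Q\setminus\{v\}$ avoids $c$, and the $B$-neighbours of $v$ are compatible by legality of the original $G_2$-step. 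At the end of the lifting we reach a colouring $\alpha^{*}$ of $G$ with $\alpha^{*}|_{V(G_2)}=\beta|_{V(G_2)}$; in particular $\alpha^{*}$ and $\beta$ colour $Q$ identically. To finish, since $\alpha^{*}$ and $\beta$ agree on $Q$ and $G_1$ is the join of $H$ with $K_{|Q|}$, the restrictions of $\alpha^{*}$ and $\beta$ to $V(G_1)$ are $\ell$-colourings of $G_1$ that differ only on $A$, where both use colours from the common $(\ell-|Q|)$-element palette missing from $Q$; since $H$ is recolorable and $\ell-|Q|\ge\chi(H)+1$, I can recolor $H$ from $\alpha^{*}|_A$ to $\beta|_A$ while holding $Q\cup B$ fixed, arriving at $\beta$. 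Concatenating, $\alpha$ and $\beta$ lie in the same component of $R_\ell(G)$; as $\alpha,\beta$ and $\ell\ge\chi(G)+1$ were arbitrary, $G$ is recolorable.

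The step I expect to be the main obstacle is the lifting of a recoloring step at a cutset vertex $v\in Q$: recolouring $v$ inside $G$ can be blocked by the colours currently on $H$, something a non-tight clique cutset would give no handle on. Tightness is exactly what makes $G_1$ the join of $H$ and $K_{|Q|}$, which forces the colours on $H$ to lie in a palette of size $\ell-|Q|\ge\chi(H)+1$ disjoint from those on $Q$; having one colour to spare in that palette is what lets me clear the incoming colour off $H$ before moving $v$, and the same join structure is what makes the final alignment of the $G_1$-sides routine.
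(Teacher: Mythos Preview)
The paper does not actually prove this lemma; it is quoted from \cite{Belavadi} without proof, so there is no in-paper argument to compare against. Your proof is correct and follows the natural line one would expect: split along the tight cutset, use recolorability of $G_2=G-A$ to drive the $Q\cup B$ side to the target, and exploit the join structure of $G_1$ to keep enough slack in the palette on $H$ so that a pending recoloring of a cutset vertex can always be unblocked first. The invariant you maintain---that at every stage $A$ is colored from the $\ell-|Q|$ colors currently absent from $Q$---is exactly what makes each lifted step legal in $G$, and the final alignment on $A$ with $Q\cup B$ frozen uses the same slack. This is essentially the standard argument for this lemma.
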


\begin{lemma}\label{blow3K1-free}
    Every blowup of a 3$K_1$-free graph is recolorable.
\end{lemma}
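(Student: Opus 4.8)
The plan is to exploit the structure of a $3K_1$-free graph together with Theorem~\ref{hereditary} (or, more directly, the join lemma and a minimal-counterexample argument). A graph $G$ is $3K_1$-free precisely when its complement is triangle-free; equivalently, $V(G)$ cannot be partitioned into three independent sets, so in particular $\overline{G}$ is bipartite-or-close-to-it. The cleanest route is to argue by induction on $|V(G)|$ (or take a vertex-minimal non-recolorable blowup) and show that such a $G$ is always a join of two smaller graphs, at which point Lemma~\ref{join} finishes the argument, since a blowup of an induced subgraph of a $3K_1$-free graph is again a blowup of a $3K_1$-free graph (the class of $3K_1$-free graphs is hereditary and closed under the relevant blowup operation), and being a join is preserved under taking blowups of the two sides.

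First I would reduce to the prime case using Theorem~\ref{hereditary}: it suffices to show every blowup of every prime $3K_1$-free graph is recolorable. So let $H$ be a prime $3K_1$-free graph; I want to show $H$ is in fact very small. The key combinatorial fact is that a prime $3K_1$-free graph has at most some bounded number of vertices — indeed, a $3K_1$-free graph $G$ that is not a join must be extremely restricted, because $\overline{G}$ is triangle-free and disconnected (a join in $G$ is a disconnection in $\overline{G}$), and a disconnected triangle-free graph on many vertices forces an independent set of size $3$ in $G$ unless both sides are tiny. Concretely: if $\overline{G}$ is connected and triangle-free and $G$ is $3K_1$-free, then $G$ has no stable set of size $3$, so $\overline{G}$ has no triangle and no... — the upshot (a short case analysis) is that the only prime $3K_1$-free graphs are $K_1$, $K_2$, $\overline{P_3} = P_3$'s complement is again something small, $C_5$, and a handful of others, all on at most five vertices. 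The main obstacle is pinning down this finite list cleanly; I expect $C_5$ (which is prime, $3K_1$-free, and self-complementary) to be the critical case, and one shows every blowup of $C_5$ is recolorable separately — blowups of $C_5$ are exactly the graphs whose complement is a blowup of $C_5$, and these are known to be recolorable (they have chromatic number equal to their clique number plus a controlled amount, and one can recolor directly or invoke a prior result for $C_5$-related classes).

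Alternatively, and perhaps more robustly, I would avoid enumerating prime graphs and instead argue directly: take a vertex-minimal non-recolorable blowup $G$ of a $3K_1$-free graph. By Lemma~\ref{join}, $G$ is not a join, so $\overline{G}$ is connected; since $G$ is $3K_1$-free, $\overline{G}$ is triangle-free; a connected triangle-free graph that is not an independent set contains an induced $P_3$, say on $\{a,b,c\}$ with $b$ the centre — but then in $G$, $a$ and $c$ are adjacent and both non-adjacent to $b$, so $b$ is a vertex with $N(b) \subseteq V(G)\setminus\{a,b,c\}$, and in fact $b$ together with the structure forces $b$ to have low degree or forces two vertices with nested neighbourhoods. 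Specifically, if $\overline{G}$ is connected triangle-free on $\ge 2$ vertices it has a vertex $u$ of degree $1$ in $\overline{G}$ (a leaf of a spanning tree suffices only after more care) or two vertices with comparable $\overline{G}$-neighbourhoods; translating back, $G$ has either a universal-minus-one vertex or non-adjacent $u,v$ with $N_G(u)\subseteq N_G(v)$. In the latter case Lemma~\ref{independent} contradicts minimality; in the former, Lemma~\ref{degree} does (a vertex of degree $|V(G)|-2$ is useless, but a vertex of degree $\le \chi(G)-1$ is what we need, obtained by examining the clique number of a $3K_1$-free graph, which satisfies $\chi(G)\le \omega(G)+1$ here by a Ramsey-type bound). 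I expect the delicate point to be producing, inside a connected triangle-free graph, either a leaf or a pair of vertices with nested neighbourhoods robustly enough to feed Lemmas~\ref{independent} and~\ref{degree}; handling the short exceptional graphs ($C_5$ and its blowups) by hand will be the remaining work.
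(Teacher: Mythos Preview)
Your proposal misses the two-line argument the paper actually gives and, in trying to work around it, introduces real errors. The paper simply observes that a blowup of a $3K_1$-free graph is again $3K_1$-free (replacing vertices by cliques cannot enlarge any independent set), and then cites Merkel's result \cite{3k1free} that every $3K_1$-free graph is recolorable. That is the entire proof.

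Your attempted routes both break down. The claim that prime $3K_1$-free graphs have boundedly many vertices is false: for every $n\ge 4$ the complement of $P_n$ is prime (modules are preserved under complementation and $P_n$ is prime) and $3K_1$-free (since $P_n$ is triangle-free), so there are arbitrarily large prime $3K_1$-free graphs and no finite case analysis will close the argument. Your alternative, looking in the connected triangle-free complement for a leaf or for a nested-neighbourhood pair, also fails in general: a connected triangle-free graph need not have a vertex of degree~$1$ (take any cycle $C_n$, $n\ge 4$), and in a prime graph no two vertices have comparable neighbourhoods, so Lemmas~\ref{independent} and~\ref{degree} cannot be fed as you describe. Invoking Theorem~\ref{hereditary} gains nothing here, since its hypothesis is exactly the statement you are trying to prove. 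The fix is to recognise closure of $3K_1$-freeness under blowups and appeal to the known recolorability of $3K_1$-free graphs.
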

\begin{proof}
    Clearly, every blowup of a 3$K_1$-free graph is 3$K_1$-free. Thus the lemma follows from the result proved in \cite{3k1free} that every 3$K_1$-free graph is recolorable.
\end{proof}

A graph is said to be \emph{chordal} if it does not contain an induced cycle of length more than three.

\begin{lemma}\label{bchordal}
    Every blowup of a chordal graph is recolorable.
\end{lemma}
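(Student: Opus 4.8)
The plan is to follow the same two-line strategy used for Lemma~\ref{blow3K1-free}: show that the class of chordal graphs is closed under taking blowups, and then appeal to the known fact that chordal graphs are recolorable. So the statement reduces to one structural claim plus one citation.

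For the structural claim, let $G$ be chordal and let $H$ be a blowup of $G$ obtained by substituting a non-empty clique $Q_v$ for each vertex $v$ in some subset of $V(G)$ (take $Q_v=\{v\}$ for the vertices that are not blown up). Each $Q_v$ is simultaneously a clique and a module of $H$. I would show $H$ has no induced cycle of length at least $4$. Suppose $C$ were such a cycle. First, $|V(C)\cap Q_v|\le 1$ for every $v$: if $w,w'\in V(C)\cap Q_v$ then $ww'\in E(H)$, so $w,w'$ are consecutive on $C$; since $|V(C)|\ge 4$, the neighbour $u$ of $w$ on $C$ other than $w'$ is adjacent to $w'$ as well (either because $u\in Q_v$, which is a clique, or because $u\notin Q_v$ and $Q_v$ is a module), yet $u$ and $w'$ are not consecutive on $C$ — a chord, contradiction. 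Hence, contracting each $Q_v$ back to a single vertex is injective on $V(C)$; and since for $u\neq v$ a vertex of $Q_u$ is adjacent to a vertex of $Q_v$ in $H$ precisely when $uv\in E(G)$, the image of $C$ is an induced cycle of the same length $\ge 4$ in $G$, contradicting that $G$ is chordal. Therefore $H$ is chordal. (Alternatively, this closure fact follows by lifting a perfect elimination ordering of $G$ to one of $H$, listing the vertices of each $Q_v$ consecutively in the position of $v$.)

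For the second step, recall from the introduction that every $k$-colorable chordal graph is $k$-color-dense and that $k$-color-dense graphs have connected $R_k$ \cite{bonamy2014}; applying this with $k=\ell$ for each $\ell\ge\chi(G)+1$ shows that every chordal graph is recolorable. Combining the two steps, every blowup of a chordal graph is chordal and hence recolorable, which is the lemma. I do not expect a real obstacle here: the only step carrying content is the closure of chordality under blowups, and the induced-cycle argument above (or the elimination-ordering argument) settles it cleanly; the recolorability of chordal graphs is quoted from \cite{bonamy2014}.
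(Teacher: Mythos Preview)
Your proposal is correct and follows exactly the paper's approach: the paper's proof simply asserts that every blowup of a chordal graph is chordal and then cites \cite{bonamy2014} for the recolorability of chordal graphs. You have merely supplied a careful justification of the ``clearly'' step, which is fine.
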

\begin{proof}
    Clearly, every blowup of a chordal graph is chordal. Thus the lemma follows from the result proved in \cite{bonamy2014} that every chordal graph is recolorable.
\end{proof}

\begin{lemma}\label{5vertex}
    Every blowup of a prime graph with at most 5 vertices is recolorable.
\end{lemma}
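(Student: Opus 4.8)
The plan is to enumerate the prime graphs on at most $5$ vertices and to check that each of them is either chordal or $3K_1$-free, so that the lemma follows at once from Lemma~\ref{bchordal} and Lemma~\ref{blow3K1-free}.

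First I would establish the classification: up to isomorphism, the only prime graphs on at most $5$ vertices are $K_1$, $K_2$, $2K_1$, $P_4$, $P_5$, the house, $C_5$, and the bull. For a graph on at most $3$ vertices this is immediate, since every graph on exactly $3$ vertices has a non-trivial module, so the only prime ones are $K_1$, $K_2$, and $2K_1$. For more vertices I would use the standard fact that if $G$ is prime and $\lvert V(G)\rvert\ge 3$ then both $G$ and $\overline{G}$ are connected: a component of $G$ with at least two vertices is a non-trivial module, and, applying this to $\overline{G}$, a universal vertex $v$ of $G$ would make $V(G)\setminus\{v\}$ a non-trivial module. Hence every vertex of $G$ has degree between $1$ and $\lvert V(G)\rvert-2$, and moreover no two vertices can have equal open neighbourhoods or equal closed neighbourhoods (such a pair would be a non-trivial module). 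For $\lvert V(G)\rvert=4$ these constraints already force $P_4$. For $\lvert V(G)\rvert=5$ the admissible degree sequences have all entries in $\{1,2,3\}$, and a short inspection of the possible realizations shows that the only ones that are connected, have connected complement, and are twin-free are $P_5$, the house ($=\overline{P_5}$), $C_5$, and the bull; one also checks that on at most $5$ vertices being connected, co-connected and twin-free already precludes a module of size $3$ or $4$, so each of these four is indeed prime. One may alternatively just cite this well-known list.

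It then remains only to locate each graph on the list. The graphs $K_1$, $K_2$, $2K_1$, $P_4$, $P_5$, and the bull are chordal: the first five contain no cycle at all, and the only cycle of the bull is a triangle. Hence, by Lemma~\ref{bchordal}, every blowup of each of these graphs is recolorable. The house and $C_5$ contain no three pairwise non-adjacent vertices: $C_5$ has independence number $2$, and since the house is $\overline{P_5}$ its independence number equals $\omega(P_5)=2$. Thus the house and $C_5$ are $3K_1$-free, so by Lemma~\ref{blow3K1-free} every blowup of each of them is recolorable. Combining the two cases proves the lemma.

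The only real work lies in the classification, and the single point that needs care is its completeness for $5$-vertex graphs; once the list of eight prime graphs is in hand, the membership of each in the class of chordal graphs or the class of $3K_1$-free graphs is read off directly, and the statements about blowups are quoted verbatim from the two preceding lemmas, so no further reconfiguration argument is required.
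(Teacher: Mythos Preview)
Your proof is correct, but it takes a somewhat different route from the paper's. The paper does not enumerate all prime graphs on at most five vertices. Instead it invokes the result from \cite{Belavadi} that every $(P_5,C_4)$-free graph is recolorable, observes that blowups preserve being $(P_5,C_4)$-free, and notes that the only small prime graphs \emph{not} covered by this are $P_5$ itself and the house (since $C_5$ and the bull are $(P_5,C_4)$-free); it then handles $P_5$ via Lemma~\ref{bchordal} and the house via Lemma~\ref{blow3K1-free}. Your approach trades the external citation for the full classification of prime graphs on at most five vertices, after which everything follows from Lemmas~\ref{blow3K1-free} and~\ref{bchordal} alone. This is slightly more self-contained, at the cost of having to justify the completeness of the list---your ``short inspection'' of five-vertex degree sequences is the only place where a reader might want more detail, though the list is standard and can simply be cited.
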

\begin{proof}
    Let $G$ be any prime graph with at most 5 vertices. If $G$ is ($P_5$, $C_4$)-free, then every blowup of $G$ is ($P_5$, $C_4$)-free. Thus the lemma follows from the result proved in \cite{Belavadi} that every ($P_5$, $C_4$)-free graph is recolorable. Note that there are no prime graphs with at most 4 vertices containing an induced $P_5$ or an induced $C_4$. 
    
    The only 5-vertex prime graphs containing an induced $P_5$ or an induced $C_4$ are $P_5$ and the house. If $G$ is the house, then since the house is 3$K_1$-free, the result follows from Lemma \ref{blow3K1-free}. If $G$ is $P_5$, then since $P_5$ is chordal, the result follows from Lemma \ref{bchordal}.
\end{proof}

Two edges in a graph are said to be \emph{independent} if they do not share an end vertex. A \emph{matching} is a set of mutually independent edges. A \emph{matched co-bipartite graph} is the complement of $K_{p,q}$-$M$, where $M$ is a maximum matching, for some $p,q \ge 1$. Note that every matched co-bipartite graph is a co-bipartite graph. We use the modular decomposition of ($P_5$, diamond)-free graphs given by Brandst\"{a}dt \cite{Andreas} to prove that every ($P_5$, diamond)-free graph is recolorable.

\begin{theorem}[\cite{Andreas}]\label{structure}
    If $G$ is a prime ($P_5$, diamond)-free graph containing a $2K_2$, then $G$ is a matched co-bipartite graph.
\end{theorem}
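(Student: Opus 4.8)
The plan is to fix an induced $2K_2$ in $G$ with edges $a_1a_2$ and $b_1b_2$, where $a_i$ is non-adjacent to $b_j$ for all $i,j\in\{1,2\}$ (note that $G$ is connected, since a prime graph containing a $2K_2$ has no isolated component), and then to show that $V(G)$ is the disjoint union of a clique $A\supseteq\{a_1,a_2\}$ and a clique $B\supseteq\{b_1,b_2\}$ such that the edges of $G$ between $A$ and $B$ form a matching. That is precisely the assertion that $G$ is a matched co-bipartite graph.

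First I would record two local restrictions. Because $G$ is $P_5$-free, no vertex $v\notin\{a_1,a_2,b_1,b_2\}$ can be adjacent to exactly one of $a_1,a_2$ and to exactly one of $b_1,b_2$: otherwise, after relabelling, $a_2,a_1,v,b_1,b_2$ would induce a $P_5$. Hence every vertex other than $a_1,a_2,b_1,b_2$ is complete to, or anticomplete to, at least one of the pairs $\{a_1,a_2\}$ and $\{b_1,b_2\}$. Because $G$ is diamond-free, two non-adjacent common neighbours of $a_1$ and $a_2$ would form a diamond with $a_1,a_2$; thus $N(a_1)\cap N(a_2)$ is a clique, likewise $N(b_1)\cap N(b_2)$, and in fact $G[N(v)]$ is a disjoint union of cliques for every vertex $v$. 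Now set
\[ A:=\{a_1,a_2\}\cup\bigl(N(a_1)\cap N(a_2)\bigr),\qquad B:=\{b_1,b_2\}\cup\bigl(N(b_1)\cap N(b_2)\bigr). \]
By the second observation $A$ and $B$ are cliques, and $|A|,|B|\ge 2$.

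The core of the proof is to show that $A$ and $B$ partition $V(G)$; here primeness is essential, and the argument is a case analysis. A vertex of $A\cap B$ is complete to $\{a_1,a_2,b_1,b_2\}$; two such vertices already form a diamond with $a_1,a_2$, while the presence of a single such vertex $c$ forces, via diamond-freeness around $c$ together with $P_5$-freeness, enough rigidity that some pair of vertices becomes a non-trivial module, contradicting that $G$ is prime. A vertex $x$ in neither $A$ nor $B$ is adjacent to at most one vertex of $\{a_1,a_2\}$ and to at most one of $\{b_1,b_2\}$, so by the first observation $x$ is either (a) adjacent to exactly one vertex of one pair and anticomplete to the other pair, or (b) anticomplete to all four of $a_1,a_2,b_1,b_2$ (case (b) handled by taking a shortest, hence induced, path from $x$ towards the $2K_2$ and applying $P_5$-freeness along it). In the principal case, say $x\sim a_1$ while $x$ misses $a_2,b_1,b_2$: using the path $x,a_1,a_2$ together with the edge $b_1b_2$, the connectivity of $G$, and the fact that $N(a_1)$ is a disjoint union of cliques, one restricts $N(x)$ enough to conclude that either $x$ is in fact complete to $\{a_1,a_2\}$ (so $x\in A$), or $\{x,a_2\}$ (or an analogous pair) is a non-trivial module. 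Working through these cases, invoking primeness repeatedly to kill candidate modules, is the main obstacle and the longest part of the proof.

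For the final step, assume $V(G)=A\cup B$ with $A$ and $B$ cliques; then $G$ is co-bipartite, and it remains only to see that the edges between $A$ and $B$ form a matching. Suppose some $x\in A$ has two neighbours $y,z\in B$. Since $B$ is a clique, $\{x,y,z\}$ is a triangle, so by diamond-freeness: every vertex of $A\setminus\{x\}$ misses both $y$ and $z$ (otherwise such a vertex, with $x,y,z$, induces a diamond), and $B=\{y,z\}$ (a third vertex of $B$ would be adjacent to both $y$ and $z$, hence in a diamond with $x$). Then $N(y)=\{x,z\}$ and $N(z)=\{x,y\}$, so every vertex outside $\{y,z\}$ — all of which lie in $A$ — is adjacent to both of $y,z$ (if it is $x$) or to neither (otherwise); thus $\{y,z\}$ is a non-trivial module, contradicting primeness. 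Hence every vertex has at most one neighbour in the opposite clique, and $G$ is a matched co-bipartite graph.
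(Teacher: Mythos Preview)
The paper does not prove this theorem; it is quoted from Brandst\"{a}dt~\cite{Andreas} and used as a black box. So there is no ``paper's proof'' to compare against, and your proposal is an independent attempt at the structural result.

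Your outline is reasonable, but the argument as written has a real error in the final matching step, and the central case analysis is only sketched.

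\medskip
\textbf{Error in the matching step.} Suppose $x\in A$ has two neighbours $y,z\in B$. You claim that any $w\in A\setminus\{x\}$ adjacent to $y$ gives a diamond on $\{w,x,y,z\}$. But if $w$ is adjacent to \emph{both} $y$ and $z$, then $\{w,x,y,z\}$ is a $K_4$, not a diamond; diamond-freeness does not forbid this. The same objection kills your claim that $B=\{y,z\}$: a third vertex $u\in B$ is adjacent to $y,z$, and if $u\sim x$ as well, $\{u,x,y,z\}$ is again a $K_4$. What diamond-freeness actually gives you is that every vertex of $A$ is complete or anticomplete to $\{y,z\}$ (a vertex seeing exactly one of $y,z$ would create a diamond), and every vertex of $B\setminus\{y,z\}$ is complete to $\{y,z\}$ since $B$ is a clique. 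Hence $\{y,z\}$ is a non-trivial module, contradicting primeness directly. So the conclusion survives, but not via your route.

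A similar slip occurs earlier: two vertices $c,c'\in A\cap B$ are adjacent (both lie in the clique $N(a_1)\cap N(a_2)$), so $\{c,c',a_1,a_2\}$ is a $K_4$, not a diamond. The diamond you want is $\{c,c',a_1,b_1\}$, where $a_1b_1$ is the missing edge.

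\medskip
\textbf{The main case analysis is not done.} You explicitly defer the heart of the proof --- ruling out a single vertex in $A\cap B$, and ruling out vertices adjacent to exactly one of $a_1,a_2$ and none of $b_1,b_2$, or anticomplete to all four --- to phrases like ``enough rigidity'' and ``one restricts $N(x)$ enough.'' These cases are where the interplay of $P_5$-freeness and primeness really bites, and without them the proposal is a plan, not a proof. In particular, for a vertex $x$ with $x\sim a_1$ and $x$ anticomplete to $\{a_2,b_1,b_2\}$, your suggested module $\{x,a_2\}$ need not be a module without further argument controlling $N(x)$ and $N(a_2)$ throughout the rest of the graph.
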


\begin{theorem}[\cite{manuscript}]\label{2K2dia-free}
    Every (2$K_2$, diamond)-free graph is recolorable.
\end{theorem}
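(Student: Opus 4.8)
The plan is to argue by induction on $|V(G)|$, using the reduction lemmas to strip away easy cases and reduce the heart of the argument to prime graphs. Write $k=\chi(G)$. If $G$ is disconnected, apply the induction hypothesis to each component; if $G$ is a join of two graphs, apply it to each factor and conclude by Lemma \ref{join}. So assume $G$ is connected and not a join. If $G$ has a vertex $v$ of degree at most $k-1$, then $G$-$v$ is recolorable by induction and $G$ is recolorable by Lemma \ref{degree}; if $G$ has non-adjacent vertices $u,v$ with $N(u)\subseteq N(v)$, then $G$-$u$ is recolorable by induction and $G$ is recolorable by Lemma \ref{independent}; if $G$ has a tight clique cutset, then every proper induced subgraph of $G$ is recolorable by induction and $G$ is recolorable by Lemma \ref{tight}. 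So we may assume that every vertex of $G$ has degree at least $k$, no two non-adjacent vertices of $G$ have nested neighborhoods, and $G$ has no tight clique cutset.

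Next I would show that $G$ must be prime; here the two forbidden subgraphs combine, in the spirit of Theorem \ref{b2k2}. Suppose not, and let $S$ be a non-trivial maximal module of $G$. Since $G$ is connected and not a join, some vertex $z$ outside $S$ is complete to $S$; if $G[S]$ contained an induced $P_3$, then $z$ together with that $P_3$ would induce a diamond, so $G[S]$ is a disjoint union of cliques, at most one of which has size at least two (because $G$ is $2K_2$-free). If $G[S]$ is disconnected, then it has a singleton component $\{a\}$ and a vertex $b$ in another component, and $a,b$ are then non-adjacent with $N(a)\subseteq N(b)$, contrary to assumption. Hence $G[S]$ is a clique $K_t$ with $t\ge2$; but then, for $x\in S$, the set $N[x]$ equals $S$ together with the vertices complete to $S$, which diamond-freeness forces to be a clique, so $x$ has degree at most $\omega(G)-1\le k-1$, again contrary to assumption. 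So $G$ is prime.

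So everything reduces to proving that a prime ($2K_2$, diamond)-free graph $G$ in which every vertex has degree at least $\chi(G)$, which has no tight clique cutset and no pair of non-adjacent vertices with nested neighborhoods, is recolorable. The structural starting point is that diamond-freeness makes $N(v)$ a disjoint union of cliques for every vertex $v$; each such clique has size at most $\chi(G)-1$ (it avoids the color of $v$ in a $\chi(G)$-coloring of $G$), and since $v$ has degree at least $\chi(G)$ there are at least two of them, so every vertex sees two non-adjacent neighbors. One then uses $2K_2$-freeness, which in a connected graph forces strong domination constraints linking the neighborhoods of vertices along edges and across non-adjacent pairs, to pin down the global structure. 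From this I would aim to conclude that $G$ is $3K_1$-free, so that $G$ is recolorable because every $3K_1$-free graph is recolorable \cite{3k1free}. (Should this conclusion turn out to be too strong, the fallback is to show instead that the surviving prime graphs form a short explicit list of small graphs, each recolorable by Lemma \ref{blow3K1-free}, Lemma \ref{bchordal}, or Lemma \ref{5vertex}.)

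The routine reductions above use only lemmas already available, so the main obstacle is this final step: proving that a prime ($2K_2$, diamond)-free graph whose minimum degree is at least its chromatic number has independence number at most $2$. Concretely, one must show that an independent triple in such a graph, together with $2K_2$-freeness, diamond-freeness, and the degree bound, would force either a non-trivial module (contradicting primality) or a vertex of degree at most $\chi(G)-1$ (contradicting the degree bound).
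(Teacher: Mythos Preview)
The paper does not prove Theorem~\ref{2K2dia-free}; it is quoted from \cite{manuscript} and used as a black box in the proof of Theorem~\ref{p5diamond}. So there is no in-paper argument to compare your proposal against.

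As for the proposal itself: your reduction to the prime case is correct and tidy---the argument that a non-trivial module in a connected, non-join, $(2K_2,\text{diamond})$-free graph forces either nested neighborhoods or a small-degree vertex is essentially the content of Theorem~\ref{b2k2} combined with $2K_2$-freeness, and it goes through as you wrote it. But after that you have not actually proved anything. You state that you ``would aim to conclude that $G$ is $3K_1$-free'' and explicitly flag this as ``the main obstacle,'' offering only a vague sketch (``$2K_2$-freeness forces strong domination constraints'') and a fallback plan. That is precisely where all the work lies: the preamble of stripping off joins, low-degree vertices, twins, and modules is routine scaffolding that precedes many recolorability arguments and does not distinguish $(2K_2,\text{diamond})$-free graphs from any other hereditary class. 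Your last two paragraphs are a description of what remains to be done, not a proof. To turn this into an argument you would need either a structure theorem for prime $(2K_2,\text{diamond})$-free graphs (which is what \cite{manuscript} presumably supplies) or a direct contradiction from an independent triple, and neither is present here.
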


We now have all the ingredients to prove Theorem \ref{p5diamond}.

\begin{proof}[Proof of Theorem \ref{p5diamond}]
    By Theorem \ref{b2k2}, it is sufficient to prove that every prime ($P_5$, diamond)-free graph is recolorable. Let $G$ be a prime ($P_5$, diamond)-free graph. If $G$ is 2$K_2$-free, then the result follows from Theorem \ref{2K2dia-free}. If $G$ contains a $2K_2$, then by Theorem \ref{structure}, $G$ is a matched co-bipartite graph. Since matched co-bipartite graphs are 3$K_1$-free, by Lemma \ref{blow3K1-free}, $G$ is recolorable.
\end{proof}

\begin{lemma}[\cite{Fouquet1}]\label{p5bip}
    Let $G$ = ($V$, $E$) be a prime graph on $n$ vertices. Then $G$ is bipartite and $P_5$-free if and only if the following conditions hold:
    \begin{itemize}
        \item There exists a partition of $V(G)$ into two independent sets $B$ = $\{ b_1,b_2,\dots,b_{\frac{n}{2}} \}$ and $W$ = $\{w_1,w_2,\dots,w_{\frac{n}{2}} \}$.
        \item The neighbors of $b_i$ ($i$ = 1,$\dots ,\frac{n}{2}$) are precisely $w_1,\dots,w_{\frac{n}{2}-i+1}$.
    \end{itemize}
\end{lemma}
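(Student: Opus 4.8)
The plan is to prove the two directions of the biconditional separately, organised around the following translation: in a bipartite graph an induced $2K_2$ is exactly a pair of same-side vertices $u,u'$ with incomparable neighbourhoods, witnessed by $w\in N(u)\setminus N(u')$ and $w'\in N(u')\setminus N(u)$ (so $\{u,w,u',w'\}$ induces $2K_2$), and conversely every induced $2K_2$ arises this way. Thus ``bipartite and $2K_2$-free'' coincides with ``bipartite and, on each side of the bipartition, the neighbourhoods are linearly ordered by inclusion'' (a chain graph); and the staircase description in the statement is exactly such a chain graph in which the two sides have equal size.

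For the ``if'' direction, assume the listed structure. Bipartiteness is immediate since $B$ and $W$ are the stated independent sets partitioning $V(G)$, and the structure gives $N(b_1)\supseteq N(b_2)\supseteq\cdots\supseteq N(b_{n/2})$ together with the symmetric chain on $W$. If $G$ had an induced $P_5$, say $x_1x_2x_3x_4x_5$, then since $G$ is bipartite this path alternates between the two sides, so its endpoints $x_1$ and $x_5$ lie on the same side and have comparable neighbourhoods; if $N(x_1)\subseteq N(x_5)$ then $x_2\in N(x_1)$ forces $x_2\in N(x_5)$, contradicting $x_2x_5\notin E(G)$, and if $N(x_5)\subseteq N(x_1)$ the same argument applies with $x_4$. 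Hence $G$ is $P_5$-free.

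For the ``only if'' direction, let $G$ be prime, bipartite and $P_5$-free. Then $G$ is connected (by the standing assumption, and in any case a disconnected graph with a component of size at least two has a non-trivial module). The step that I expect to require the most care is showing that $G$ has no induced $2K_2$. Suppose it does; among all induced copies of $2K_2$ in $G$, with edges $ab$ and $cd$, pick one minimising $d_G(\{a,b\},\{c,d\})$, and let $\pi=q_0q_1\cdots q_k$ be a shortest path attaining this distance, with $q_0\in\{a,b\}$ and $q_k\in\{c,d\}$; say $q_0=a$ and $q_k=c$. The four cross pairs among $\{a,b,c,d\}$ are non-edges of the $2K_2$, so there is no edge between $\{a,b\}$ and $\{c,d\}$, whence $k\ge 2$. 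Using that $\pi$ is induced, that $\{a,b,c,d\}$ induces a $2K_2$, that $G$ is triangle-free, and that $\pi$ has minimum length among all $\{a,b\}$--$\{c,d\}$ paths, one checks that the only edges of $G$ among the vertices $b,\,a=q_0,\,q_1,\dots,q_{k-1},\,q_k=c,\,d$ are those between consecutive terms; hence this sequence is an induced path on $k+3\ge 5$ vertices, and $G$ has an induced $P_5$ --- a contradiction. Therefore $G$ is a chain graph.

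It remains to extract the staircase. Since $G$ is prime, no two vertices have the same neighbourhood (two same-side vertices with equal neighbourhoods form a non-trivial module, while opposite-side vertices in a connected bipartite graph on at least two vertices have neighbourhoods contained in disjoint sets), and connectivity rules out isolated vertices; so on each side the neighbourhood chain is strictly increasing with all terms non-empty. A strict chain $\emptyset\neq S_1\subsetneq\cdots\subsetneq S_{|B|}\subseteq W$ forces $|W|\ge|B|$, and symmetrically $|B|\ge|W|$, so $|B|=|W|=n/2$, and then $|N(b_i)|$ is forced to equal $n/2-i+1$ for $i=1,\dots,n/2$. Ordering $B=\{b_1,\dots,b_{n/2}\}$ by decreasing degree and labelling $W=\{w_1,\dots,w_{n/2}\}$ so that $w_1$ is the unique neighbour of $b_{n/2}$ and, for $1\le i\le n/2-1$, $w_{n/2-i+1}$ is the unique vertex of $N(b_i)\setminus N(b_{i+1})$, we obtain $N(b_i)=\{w_1,\dots,w_{n/2-i+1}\}$ for every $i$, which is exactly the asserted structure.
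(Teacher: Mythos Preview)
Your argument is correct. The paper does not supply its own proof of this lemma: it is quoted from \cite{Fouquet1} and used as a black box, so there is nothing to compare against beyond noting that your proof is self-contained while the paper defers to the literature. Your translation of $P_5$-freeness in a bipartite graph into $2K_2$-freeness, hence into the chain-graph property, is the standard route, and the shortest-path argument you sketch for eliminating an induced $2K_2$ goes through once one observes (via the bipartition) that $b$ lies on the same side as $q_1$ and $d$ on the same side as $q_{k-1}$, so the only potential extra edges from $b$ or $d$ to internal path vertices would shortcut the path. The extraction of the staircase labelling from a prime chain graph is also fine; the key points---distinct neighbourhoods forcing strict chains, the counting giving $|B|=|W|$ and $|N(b_i)|=n/2-i+1$---are exactly what is needed.
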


\begin{lemma}\label{blowupbip}
    Every blowup of a prime $P_5$-free bipartite graph is recolorable.
\end{lemma}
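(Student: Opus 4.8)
The plan is to prove the stronger assertion that \emph{every blowup of every $P_5$-free bipartite graph is recolorable}, of which the lemma is a special case; the proof is by induction on the number of vertices, the base case being immediate. Let $G'$ be a blowup of a $P_5$-free bipartite graph $G$. Every proper induced subgraph of $G'$ is a blowup of an induced subgraph of $G$, hence a blowup of a smaller $P_5$-free bipartite graph, and so is recolorable by induction. If $G'$ is disconnected, each component is a smaller such blowup and we are done; if $G'$ is a join, Lemma~\ref{join} together with the inductive hypothesis applied to the two sides finishes the case. So assume $G'$ is connected and is neither a join nor a disjoint union; then its maximal modules $S_1,\dots,S_m$ partition $V(G')$, its skeleton $(G')^{*}$ is prime, and the clique skeleton $H:=G'(Q_1,\dots,Q_m)$ is a blowup of $(G')^{*}$.

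The crucial structural step is to show that $(G')^{*}$ is again a prime $P_5$-free \emph{bipartite} graph. It is $P_5$-free, being an induced subgraph of $G'$. It is triangle-free: write $V(G')=V_B\cup V_W$ for the two blown-up sides of $G$, each of which induces a disjoint union of cliques; any triangle of $G'$ has two of its vertices inside one blown-up clique (otherwise the three blown-up vertices would form a triangle of $G$), hence inside one maximal module, so the triangle does not survive the contraction. Moreover $G'$ is $C_5$-free: an induced $C_5$ has at most three vertices in $V_B$ (four would induce a $P_3$), and since those vertices of $V_B$ induce no $P_3$, three of them must induce $K_2+K_1$ in the $C_5$; this puts two adjacent cycle-vertices into one blown-up clique, so (being in a common module of $G'$) they have equal neighbourhoods among the remaining three cycle-vertices, which is false in $C_5$. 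Finally, any triangle-free graph with no induced $C_5$ and no induced $P_5$ is bipartite, since a shortest odd cycle is a triangle or a chordless odd hole and every $C_{2k+1}$ with $k\ge 3$ contains an induced $P_5$. Hence $(G')^{*}$ is bipartite, so by Lemma~\ref{p5bip} it is the staircase described there; as $G'$ is not a join we have $(G')^{*}\ne K_2$, so $n\ge 4$ in that lemma and the vertex $b_{n/2}$ has $w_1$ as its unique neighbour.

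Now $|V(H)|=\sum_{p}\chi(G'[S_p])\le |V(G')|$. If this inequality is strict, then $H$ is a blowup of a $P_5$-free bipartite graph with fewer vertices than $G'$, hence recolorable by induction; since every proper induced subgraph of $G'$ is recolorable, Theorem~\ref{moddecomposition} gives that $G'$ is recolorable. If equality holds, then each $G'[S_p]$ is a clique, so $H=G'$ and $G'$ is a clique-blowup of the staircase $(G')^{*}$; in that case the blown-up clique corresponding to $b_{n/2}$ has external neighbourhood exactly the clique $Q$ corresponding to $w_1$, so $G'-Q$ isolates that clique, which is complete to $Q$. Thus $Q$ is a tight clique cutset of $G'$, and as every proper induced subgraph of $G'$ is recolorable, Lemma~\ref{tight} yields that $G'$ is recolorable. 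This completes the induction.

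I expect the structural step of the second paragraph to be the only real obstacle: showing that contracting the maximal modules of a blowup of a $P_5$-free bipartite graph again produces a prime $P_5$-free bipartite graph, which reduces to ruling out triangles and induced $5$-cycles upon contraction. Once that is granted, the rest is routine bookkeeping with the tools already assembled --- Lemma~\ref{join} for joins, Theorem~\ref{moddecomposition} for the clique-skeleton reduction, Lemma~\ref{p5bip} for the shape of the prime skeleton, and Lemma~\ref{tight} for the tight clique cutset possessed by every blowup of a staircase.
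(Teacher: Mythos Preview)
Your proof is correct and takes a genuinely different route from the paper's. The paper argues directly: since a prime $P_5$-free bipartite graph $G$ is a staircase with a degree-$1$ vertex, any blowup $H$ of $G$ inherits a tight clique cutset, and Lemma~\ref{tight} finishes once every proper induced subgraph of $H$ is known to be recolorable. The paper asserts the latter by claiming each such subgraph is again a blowup of a \emph{prime} $P_5$-free bipartite graph, which is not literally true (deleting vertices may leave a blowup of a non-prime induced subgraph of $G$); your strengthening to all $P_5$-free bipartite graphs is exactly what makes that induction go through cleanly. Your route is more elaborate in that you do not look for a tight clique cutset in $G'$ itself but instead pass to the skeleton $(G')^{*}$, verify it is again prime $P_5$-free bipartite, and then invoke Theorem~\ref{moddecomposition} (or, when $H=G'$, Lemma~\ref{tight}) to pull recolorability back from the clique skeleton. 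One simplification worth noting: your second-paragraph structural step, while correct, can be replaced by a one-line observation. Each maximal module $S_p$ of $G'$ is a union of blown-up cliques $Q_v$, so the sets $T_p=\{v\in V(G): Q_v\subseteq S_p\}$ form a partition of $V(G)$ into modules of $G$, and $(G')^{*}$ is isomorphic to the subgraph of $G$ induced by any transversal of the $T_p$'s; hence $(G')^{*}$ is automatically $P_5$-free and bipartite, with no need for the separate triangle and $C_5$ arguments. Likewise your case split on $|V(H)|$ is avoidable: since $|V(H)|\le |V(G')|$, every proper induced subgraph of $H$ has fewer than $|V(G')|$ vertices and is recolorable by induction, so Lemma~\ref{tight} applied to $H$ followed by Theorem~\ref{moddecomposition} handles both cases uniformly.
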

\begin{proof}
    If possible, let $H$ be a vertex-minimal minimal counterexample, say $H$ is a blowup of some prime $P_5$-free bipartite graph $G$. By Lemma \ref{p5bip}, $G$ has a vertex of degree 1. The neighbor of this degree 1 vertex is a tight clique cutset of $G$. Thus $H$ has a tight clique cutset. Every proper induced subgraph of $H$ is also a blowup of some prime $P_5$-free bipartite graph and hence recolorable. Then by Lemma \ref{tight}, $H$ is recolorable, a contradiction.
\end{proof}

\begin{lemma}\label{blowupco-bip}
    Every blowup of a co-bipartite graph is recolorable.
\end{lemma}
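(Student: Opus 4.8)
The plan is to reduce immediately to Lemma \ref{blow3K1-free}. First I would recall that a co-bipartite graph $G$ is, by definition, the complement of a bipartite graph, so $V(G)$ admits a partition into two cliques $C_1$ and $C_2$. From this it follows at once that $G$ is $3K_1$-free: any three pairwise non-adjacent vertices would, by the pigeonhole principle, place two of them in the same $C_i$, contradicting that $C_i$ is a clique. Since being $3K_1$-free is exactly the hypothesis of Lemma \ref{blow3K1-free}, every blowup of $G$ is recolorable, and we are done.

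Alternatively, one can make the reduction even more transparent by observing that a blowup of a co-bipartite graph is again co-bipartite: substituting a clique $Q_v$ for a vertex $v \in C_i$ produces a vertex set $(C_i \setminus \{v\}) \cup Q_v$ that is still a clique, because $Q_v$ is complete to $N(v) \supseteq C_i \setminus \{v\}$ and $Q_v$ itself is a clique; then the blowup is a (co-bipartite, hence) $3K_1$-free graph, and one invokes the result of \cite{3k1free} that every $3K_1$-free graph is recolorable. Either way, the only thing requiring verification is the pigeonhole observation (respectively, that blowups preserve co-bipartiteness), and both are routine, so there is no genuine obstacle here; all the work has already been done in Lemma \ref{blow3K1-free}.
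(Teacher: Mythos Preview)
Your proposal is correct and follows exactly the paper's approach: observe that every co-bipartite graph is $3K_1$-free and then invoke Lemma~\ref{blow3K1-free}. The paper's proof is a two-line version of your first paragraph, so there is nothing to add.
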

\begin{proof}
    Clearly, every co-bipartite graph is 3$K_1$-free. Hence the result follows from Lemma \ref{blow3K1-free}.
\end{proof}

\begin{theorem}[\cite{Fouquet}]\label{strPHB}
    If $G$ is a prime ($P_5$, house, bull)-free graph on 6 or more vertices without a universal vertex, then $G$ is either a bipartite graph or a co-bipartite graph.
\end{theorem}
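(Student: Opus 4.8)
The plan is to reduce the statement to two self-contained claims and to exploit the self-complementary nature of the class. First I would record the symmetry: the house is exactly $\overline{P_5}$ and the bull is self-complementary, so the class of ($P_5$, house, bull)-free graphs is closed under complementation; moreover a prime graph on at least two vertices is both connected and co-connected, so the hypotheses (prime, $n\geq 6$, no universal vertex) are preserved under $G\mapsto\overline{G}$, as is the conclusion ``bipartite or co-bipartite.'' Next I would observe that \emph{within this class the only possible odd hole or odd antihole is $C_5$}: every odd hole $C_{2t+1}$ with $t\geq 3$ contains an induced $P_5$, and dually every odd antihole $\overline{C_{2t+1}}$ with $t\geq 3$ contains an induced $\overline{P_5}=$ house. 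Consequently $G$ is non-bipartite if and only if it contains a triangle or a $C_5$, and (taking complements) non-co-bipartite if and only if it contains a $3K_1$ or a $C_5$. Hence $G$ is neither bipartite nor co-bipartite precisely when it contains an induced $C_5$, or when it contains both a triangle and a stable set of size three. The theorem therefore follows from two claims: (I) $G$ has no induced $C_5$; and (II) $G$ does not contain both a $K_3$ and a $3K_1$.

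For Claim (I), suppose $C=v_1v_2v_3v_4v_5$ is an induced $C_5$. Since $n\geq 6$ and $G$ is prime, $C$ is not a module, so some $u\notin C$ distinguishes two vertices of $C$, i.e. $1\leq|N(u)\cap C|\leq 4$. Using the rotation/reflection symmetry of $C_5$ together with the complementation symmetry above (which replaces $N(u)\cap C$ by its complement inside $C$ and fixes the class), it suffices to treat $|N(u)\cap C|\in\{1,2\}$. If $|N(u)\cap C|=1$, or if $N(u)\cap C$ is a pair of adjacent cycle-vertices, a direct check exhibits an induced $P_5$ on $u$ together with four cycle-vertices. The only surviving case is $N(u)\cap C=\{v_1,v_3\}$ (two vertices at distance two), and here $u$ and $v_2$ are false twins ($N(u)\cap C=N(v_2)\cap C$ and $u\not\sim v_2$), so $\{u,v_2\}$ is a module of $G[C\cup\{u\}]$. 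As $G$ is prime, some vertex $w$ splits this pair; I would then run a short secondary case analysis on $N(w)\cap(C\cup\{u\})$, constrained by $P_5$-, house-, and bull-freeness, to force a forbidden induced subgraph.

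For Claim (II), by Claim (I) we may assume $G$ is $C_5$-free. Suppose for contradiction that $G$ contains a triangle $T=\{a,b,c\}$ and a stable set $S=\{x,y,z\}$. I would analyze the bipartite adjacency pattern between $T$ and $S$: bull-freeness forbids a triangle carrying two non-adjacent pendant vertices, which tightly restricts how the vertices of $S$ may attach to $T$; $P_5$- and house-freeness eliminate the remaining patterns, and primeness removes the twin configurations that the forbidden-subgraph tests alone leave open. The outcome is a contradiction, establishing that $T$ and $S$ cannot coexist.

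Finally I would assemble the pieces. By Claim (II), $G$ is triangle-free or $3K_1$-free. If $G$ is triangle-free, then it has no triangle and, by $P_5$-freeness together with Claim (I), no odd hole; since the shortest odd cycle of any graph is an induced odd cycle (a triangle or an odd hole), $G$ has no odd cycle at all and is therefore bipartite. If instead $G$ is $3K_1$-free, the same argument applied to $\overline{G}$, which lies in the class and satisfies the hypotheses, shows $\overline{G}$ is bipartite, so $G$ is co-bipartite. I expect the main obstacle to be exactly the two places where a purely local forbidden-subgraph check is insufficient and primeness must be invoked nonlocally: the twin-splitting step in Claim (I) (the $\{v_1,v_3\}$ attachment produces no small forbidden subgraph by itself, only a module), and the triangle/stable-set interaction in Claim (II); these carry the bulk of the careful casework.
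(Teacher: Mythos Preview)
The paper does not prove this theorem; it is quoted from \cite{Fouquet} and used as a black box, so there is no in-paper argument to compare your proposal against.

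That said, a brief assessment of your plan on its own merits: the self-complementarity reduction is correct and clean, and your observation that in this class the only possible odd hole/antihole is $C_5$ is right, so the split into Claims~(I) and~(II) is logically sound. Claim~(I) is in fact exactly the content of another result the paper also cites without proof (Theorem~\ref{p5house}: every prime $(P_5,\text{house})$-free graph is either $C_5$ or $C_5$-free), so the ``secondary case analysis'' you anticipate for the $\{v_1,v_3\}$ attachment is real work that the paper likewise outsources to the literature. Claim~(II) is where your sketch is thinnest: you assume a triangle $T$ and a stable triple $S$ and propose to analyse the $T$--$S$ adjacency pattern, but you do not argue that $T$ and $S$ can be taken disjoint, you do not enumerate the attachment types, and ``primeness removes the twin configurations'' is doing a lot of unexamined lifting. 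Since Claim~(II), given Claim~(I), is equivalent to the full theorem, this is not a minor detail---it is essentially the whole proof, and as written it is a plan rather than an argument. The overall strategy is reasonable and probably can be completed, but the proposal as it stands does not yet contain the substantive step.
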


We now prove Theorem \ref{PHB}.

\begin{proof}[Proof of Theorem \ref{PHB}]
    By Theorem \ref{hereditary}, it is sufficient to prove that every blowup of every prime ($P_5$, house, bull)-free graph is recolorable. Let $G$ be a prime ($P_5$, house, bull)-free graph. By Lemma \ref{join}, we may assume that $G$ does not have a universal vertex. By Lemma \ref{5vertex}, we may assume that $G$ has at least 6 vertices. By Theorem \ref{strPHB}, $G$ is a bipartite graph or a co-bipartite graph. Then the result follows from Lemma \ref{blowupbip} and Lemma \ref{blowupco-bip}.
\end{proof}

 A \emph{thin spider} is a graph whose vertex set can be decomposed into a clique $K$ and an independent set $S$, such that $|K|$ = $|S|$ or $|K|$ = $|S|$+1, and the edges between $S$ and $K$ form a matching, leaving at most one vertex of $K$ with no neighbor in $S$.

\begin{lemma}\label{blowupspider}
    Every blowup of a thin spider or the complement of a thin spider is recolorable.
\end{lemma}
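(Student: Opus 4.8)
The plan is to prove that every thin spider and the complement of every thin spider is chordal, and then invoke Lemma~\ref{bchordal}, which states that every blowup of a chordal graph is recolorable.

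First I would check that a thin spider $G$ with clique part $K$ and independent part $S$ is chordal. Every vertex of $S$ has degree at most $1$ in $G$ (its only possible neighbour being its partner in the matching into $K$), so no vertex of $S$ lies on any cycle of $G$. Hence every induced cycle of $G$ is contained in $K$; but $K$ is a clique, so $G$ has no induced cycle of length at least $4$, i.e.\ $G$ is chordal. Next I would check that $\overline{G}$ is chordal. In $\overline{G}$ the set $K$ is an independent set and $S$ is a clique. Given any $v\in K$, since $K$ is independent in $\overline{G}$ all neighbours of $v$ lie in $S$, and $S$ is a clique, so $N_{\overline{G}}(v)$ is a clique; a vertex with a clique neighbourhood cannot lie on an induced cycle of length at least $4$ (its two cycle-neighbours would then have to be non-adjacent). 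Therefore any induced cycle of length at least $4$ in $\overline{G}$ avoids $K$ and so is contained in the clique $S$ — impossible. Thus $\overline{G}$ is chordal.

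Finally, since a blowup is obtained by substituting non-empty cliques for vertices, and substituting a clique (a module) for a vertex of a chordal graph again yields a chordal graph, every blowup of a thin spider and every blowup of the complement of a thin spider is a blowup of a chordal graph, hence chordal; Lemma~\ref{bchordal} then delivers recolorability.

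The only point that needs care is the chordality of $\overline{G}$: complements of chordal graphs are not chordal in general (for instance, $\overline{P_5}$ is the house, which contains an induced $C_4$), so one really does have to use the structure of a thin spider here. The observation that makes it work is that in $\overline{G}$ the independent side $K$ consists of vertices whose neighbourhoods lie inside the clique $S$; I do not expect any genuine obstacle beyond spelling this out.
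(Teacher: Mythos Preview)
Your proposal is correct and follows exactly the paper's approach: the paper's proof simply asserts that every thin spider and every complement of a thin spider is chordal and then invokes Lemma~\ref{bchordal}. You supply the details the paper omits (the chordality verification for both cases), and your arguments for these are sound; the final paragraph about blowups preserving chordality is superfluous since Lemma~\ref{bchordal} already handles blowups directly, but it does no harm.
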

\begin{proof}
    Let $H$ be a blowup of a thin spider or the complement of a thin spider. Since every thin spider and every complement of a thin spider is chordal, the graph $H$ is recolorable by Lemma \ref{bchordal}.
\end{proof}

\begin{theorem}[\cite{Fouquet2}]\label{semiparse}
    Let $G$ be a prime semi-$P_4$-sparse graph. Then either $G$ or its complement is a bipartite graph or a thin spider.
\end{theorem}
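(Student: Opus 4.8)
The plan is to classify prime ($P_5$, $C_5$, co-fork)-free graphs by a dichotomy on the presence of triangles in $G$ and in its complement $\overline{G}$, and to match the three resulting cases to the four possible conclusions. First I would dispose of degenerate cases: since $G$ is prime on at least three vertices it has neither a universal nor an isolated vertex, and the small graphs (say $|V(G)| \le 5$) can be checked directly. The three cases I would treat are: $G$ is triangle-free; $\overline{G}$ is triangle-free; and both $G$ and $\overline{G}$ contain a triangle. These are exhaustive, and they align exactly with the conclusion, since a bipartite graph is triangle-free, a co-bipartite graph has triangle-free complement, and a thin spider with at least two legs has triangles both in itself (its clique $K$) and in its complement (its independent set $S$).

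The triangle-free branch is clean and I would carry it out in full. Because the \emph{co-fork} itself contains a triangle, any triangle-free graph is automatically co-fork-free, so in this case the only active constraints are $P_5$- and $C_5$-freeness. A shortest odd cycle of $G$ is chordless, hence induced, and has length $3$, $5$, or at least $7$; the first two are excluded directly, and an induced odd cycle of length at least $7$ contains an induced $P_5$ on five consecutive vertices. Hence $G$ has no odd cycle and is bipartite. The branch where $\overline{G}$ is triangle-free is handled \emph{analogously in spirit} but is genuinely more delicate: here $\alpha(G) \le 2$ and I would aim to show $\overline{G}$ is bipartite; $\overline{G}$ is $C_5$-free, so the task reduces to excluding induced odd anti-holes of length at least $7$ in $G$ (equivalently induced $\overline{C_{2k+1}}$ in $G$). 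This exclusion is the point at which the co-fork and $P_5$ constraints must be pushed hardest, and verifying it carefully is, I expect, the subtlest part of the whole argument.

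The remaining case, where both $G$ and $\overline{G}$ contain a triangle, is the crux, and here I must produce a thin spider in $G$ or in $\overline{G}$. I would fix a maximum clique $K$ of $G$ and analyze $V(G)\setminus K$. The co-fork is exactly a diamond with a pendant attached at a degree-two vertex, so co-fork-freeness, together with $P_5$- and $C_5$-freeness, should force every vertex outside $K$ to have at most one neighbor in $K$ and should force the outside vertices to form an independent set joined to $K$ by a matching — precisely the leg structure of a thin spider — with primality pinning down the sizes so that $|K| = |S|$ or $|K| = |S|+1$; if the obstruction instead appears on the complement side, the same analysis applied to $\overline{G}$ yields a thin spider in $\overline{G}$. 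I expect the main obstacle to be this extraction step: systematically ruling out the ways a vertex can attach to $K$ (two non-adjacent outside neighbors creating a co-fork or $P_5$, a vertex with two neighbors in $K$ creating a diamond that extends to a co-fork, and so on) is where the bulk of the casework lies, and it is exactly where primality must be invoked to forbid the proper modules that would otherwise collapse the spider structure.
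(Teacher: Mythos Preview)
The paper does not give a proof of this theorem; it is quoted from \cite{Fouquet2} and used as a black box in the proof of Theorem~\ref{PHK}. There is no argument in the paper to compare your sketch against.

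On the substance of your plan: your instinct that the $\alpha(G)\le 2$ branch is ``the subtlest part'' is well placed --- in fact the statement, taken literally with the paper's definition of thin spider, does not close on that branch. Consider $G=\overline{C_7}$. It is prime (since $C_7$ is), and since every vertex of $\overline{C_7}$ has only two non-neighbours, no $5$-vertex induced subgraph of $G$ can contain a vertex of degree~$1$; hence $G$ has no induced $P_5$ and no induced co-fork, and it is $C_5$-free because $C_7$ is. So $\overline{C_7}$ is prime and semi-$P_4$-sparse. Yet $\omega(\overline{C_7})=3$ and $\omega(C_7)=2$, so on seven vertices neither $G$ nor $\overline{G}$ can be a thin spider in the paper's sense (that would require a clique of size~$4$), and neither is bipartite. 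This is harmless for the paper's application --- $\overline{C_7}$ and all its blowups are $3K_1$-free and hence recolorable by Lemma~\ref{blow3K1-free} --- but it means a verbatim proof of the paraphrased statement cannot succeed, and the obstruction sits exactly where you anticipated it. A separate structural issue with your outline is that the class is not self-complementary (the complement of a $(P_5,C_5,\text{co-fork})$-free graph is $(\text{house},C_5,\text{fork})$-free), so ``the same analysis applied to $\overline{G}$'' in your final case would require a genuinely different argument, not a symmetric one.
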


We now prove Theorem \ref{PHK}.

\begin{proof}[Proof of Theorem \ref{PHK}]
     By Theorem \ref{hereditary}, it is sufficient to prove that every blowup of every prime semi-$P_4$-sparse graph is recolorable. Let $G$ be a prime semi-$P_4$-sparse graph. Note that $G$ is $P_5$-free. By Theorem \ref{semiparse}, $G$ or its complement is a bipartite graph or a thin spider. If $G$ or its complement is a bipartite graph, then the result follows from Lemma \ref{blowupbip} or Lemma \ref{blowupco-bip}, respectively. If $G$ or its complement is a thin spider, then the result follows from Lemma \ref{blowupspider}. 
\end{proof}

\begin{theorem}[\cite{Fouquet1}]\label{p5house}
    Every prime ($P_5$, house)-free graph is either a $C_5$ or $C_5$-free.
\end{theorem}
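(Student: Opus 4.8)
The plan is to exploit the fact that the house is exactly the complement $\overline{P_5}$, so that the class of $(P_5,\text{house})$-free graphs is self-complementary, as are the properties of being prime and of containing an induced $C_5$ (recall $\overline{C_5}\cong C_5$). The statement ``$G$ is a $C_5$ or is $C_5$-free'' is equivalent to: a prime $(P_5,\text{house})$-free graph that contains an induced $C_5$ has exactly five vertices. So I would fix an induced $C_5$ on $C=\{c_1,\dots,c_5\}$ (indices mod $5$, with $c_ic_{i+1}\in E(G)$), assume for contradiction that $G$ has a further vertex, and aim to produce either an induced $P_5$, an induced house, or a nontrivial module. Throughout, passing to $\overline{G}$ lets me treat any configuration together with its complement, which roughly halves the analysis.

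First I would classify, for a vertex $v\notin C$, the possible traces $N(v)\cap C$ by testing the ($\le 6$-vertex) graph $G[C\cup\{v\}]$ against $P_5$ and house. If $v$ has a single neighbour $c_i$ in $C$, then $v,c_i,c_{i+1},c_{i+2},c_{i+3}$ induce a $P_5$; if $N(v)\cap C=\{c_i,c_{i+1}\}$ is an adjacent pair, then $v,c_i,c_{i-1},c_{i-2},c_{i-3}$ induce a $P_5$. If $N(v)\cap C$ is a ``triple with a gap'' such as $\{c_i,c_{i+1},c_{i+3}\}$, then $v$ together with $c_i,c_{i+1}$ and the two vertices it misses induces a house; and if $|N(v)\cap C|=4$ with non-neighbour $c_j$, then $v,c_{j-1},c_j,c_{j+1},c_{j+2}$ induce a house (here $v,c_{j+1}$ are the two degree-three vertices, $c_{j+2}$ is the apex). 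The only surviving traces are the empty set, a \emph{distance-two pair} $\{c_{i-1},c_{i+1}\}$ (the neighbourhood in $C$ of some $c_i$), a \emph{consecutive triple} $\{c_{i-1},c_i,c_{i+1}\}$ (the closed neighbourhood in $C$ of some $c_i$), and all of $C$. Complementation interchanges the empty and full traces and interchanges the distance-two pair with the consecutive triple, so it suffices to eliminate the empty and distance-two-pair traces.

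Next I would eliminate the empty trace. If $v\notin C$ has no neighbour in $C$ but is adjacent to some $w$ whose trace is a distance-two pair or a consecutive triple, then $w$ is non-adjacent to a consecutive pair $c_{j+2},c_{j+3}$ while being adjacent to $c_{j+1}$, and $v,w,c_{j+1},c_{j+2},c_{j+3}$ induce a $P_5$; by connectivity (a prime graph on $\ge 6$ vertices is connected) one can always walk a non-adjacent vertex toward $C$ to reach such a $w$, unless every approach to $C$ is through full-trace vertices, a case deferred to the obstacle below. For the distance-two-pair (``mimic'') trace the key observation is that if $N(v)\cap C=\{c_{i-1},c_{i+1}\}$ and $v\not\sim c_i$, then $v$ and $c_i$ have \emph{identical} neighbourhoods inside $C\cup\{v\}$, so they can differ only at external vertices; I would show that any external vertex $x$ distinguishing $v$ from $c_i$, together with a suitable triple from $C$, induces a $P_5$ or a house (running this against each admissible trace of $x$ and its complement). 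This forces the set of all mimic-$c_i$ vertices together with $c_i$ to be a nontrivial module, contradicting primeness. The expected endgame is that the four traces partition $V(G)\setminus C$ into blocks, each of which—appended to the relevant $c_i$, or to $C$ itself—is a module, so primeness yields $V(G)=C$ and hence $G=C_5$.

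The hard part will be closing off the interaction between empty-trace and full-trace vertices. A direct local check shows that a single empty-trace vertex adjacent to a single full-trace vertex is itself $(P_5,\text{house})$-free (the full-trace vertex has degree $4$ in every relevant $5$-subset, so neither forbidden graph appears), so no local obstruction exists there: the contradiction must come purely from primeness, by exhibiting a homogeneous set among the empty-trace and full-trace vertices rather than a forbidden induced subgraph. Managing this bookkeeping—verifying that the distinguishing-vertex argument really does cover every admissible trace of the distinguishing vertex, and that the leftover empty/full configuration always produces a module—is the principal technical burden of the proof.
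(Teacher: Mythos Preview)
The paper does not prove this theorem; it is quoted from \cite{Fouquet1} and invoked without proof in the argument for Theorem~\ref{PHC}. There is thus nothing in the paper to compare your argument against, so the remarks below concern the proposal on its own terms.

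Your neighbourhood-trace method is the standard attack and the plan is sound, but two points need correction. First, a slip in the trace classification: for $N(v)\cap C=\{c_i,c_{i+1},c_{i+3}\}$ the set you name, $\{v,c_i,c_{i+1},c_{i+2},c_{i+4}\}$, has degree sequence $(1,1,2,3,3)$ and induces a bull, not a house; the correct witness is $\{v,c_i,c_{i+1},c_{i+2},c_{i+3}\}$, where $v,c_{i+1},c_{i+2},c_{i+3}$ form the $4$-cycle and $c_i$ is the roof. Second, the difficulty is not quite where you locate it. If only empty- and full-trace vertices remain outside $C$, then $C$ itself is a nontrivial module and primeness fails at once. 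And if a $T_i$-vertex $w$ (trace $\{c_{i-1},c_{i+1}\}$) is present, then $(C\setminus\{c_i\})\cup\{w\}$ is another induced $C_5$, in which any empty- or full-trace vertex that distinguishes $w$ from $c_i$ acquires a trace of size $1$ or $4$ and hence yields a $P_5$ or house by your own classification. So the empty/full bookkeeping is light. The substantive work is the part you pass over: verifying, for a $T_i$-vertex $w$, that no $T_j$- or $U_j$-vertex $x$ can distinguish $w$ from $c_i$ without creating a forbidden subgraph. That case analysis, over the various values of $j$ and the two possible adjacencies between $x$ and $w$, is not carried out in your proposal.
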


We now prove Theorem \ref{PHC}.

\begin{proof}[Proof of Theorem \ref{PHC}]
    It is obvious that if every ($P_5$, house)-free graph is recolorable, then every ($P_5$, house, $C_5$)-free graph is recolorable. Now assume that every ($P_5$, house, $C_5$)-free graph is recolorable. By Theorem \ref{hereditary}, it is sufficient to prove that every blowup of every prime ($P_5$, house)-free graph is recolorable. Let $G$ be a prime ($P_5$, house)-free graph. If $G$ is $C_5$-free, then every blowup of $G$ is ($P_5$, house, $C_5$)-free and hence recolorable. If $G$ contains a $C_5$, then by Theorem \ref{p5house}, $G$ is isomorphic to $C_5$. Since $C_5$ is 3$K_1$-free, every blowup of $C_5$ is recolorable by Lemma \ref{blow3K1-free}.
\end{proof}

\section{Conclusion}\label{conclusion}

Let $\mathcal{G}$ be a hereditary class of graphs. In Theorem \ref{hereditary}, we proved that every graph in $\mathcal{G}$ is recolorable if every blowup of every prime graph in $\mathcal{G}$ is recolorable. We have two questions.\\

\begin{figure}[h]
\centering
\begin{tikzpicture}[scale=0.5]
\tikzstyle{vertex}=[circle, draw, fill=black, inner sep=0pt, minimum size=5pt]

    \node[vertex](1) at (4,0) {};
    \node[vertex](2) at (4*cos{60},4*sin{60}){};
    \node[vertex](3) at (4*cos{120},4*sin{120}){};
    \node[vertex](4) at (4*cos{180},4*sin{180}){};
    \node[vertex](5) at (4*cos{240},4*sin{240}){};
    \node[vertex](6) at (4*cos{300},4*sin{300}){};
    \node[vertex](7) at (4*cos{60}-2,4*sin{60}+2){};
     
    \draw(1)--(6); \draw(1)--(2); \draw(2)--(3); \draw(3)--(4); \draw(5)--(4); \draw(5)--(6); \draw(2)--(7); \draw(3)--(7);

    \node[vertex, label=right:1](11) at (17,0) {};
    \node[vertex, label=right:3](12) at (5*cos{60}+12,5*sin{60}){};
    \node[vertex, label=left:5](13) at (5*cos{120}+12,5*sin{120}){};
    \node[vertex, label=left:1](14) at (5*cos{180}+12,5*sin{180}){};
    \node[vertex, label=left:3](15) at (5*cos{240}+12,5*sin{240}){};
    \node[vertex, label=right:5](16) at (5*cos{300}+12,5*sin{300}){};
    \node[vertex, label=right:1](17) at (5*cos{60}+9.5,5*sin{60}+2){};
    \node[vertex, label=left:2](21) at (15,0) {};
    \node[vertex, label=below:4](22) at (3*cos{60}+12,3*sin{60}){};
    \node[vertex, label=below:6](23) at (3*cos{120}+12,3*sin{120}){};
    \node[vertex, label=right:2](24) at (3*cos{180}+12,3*sin{180}){};
    \node[vertex, label=above:4](25) at (3*cos{240}+12,3*sin{240}){};
    \node[vertex, label=above:6](26) at (3*cos{300}+12,3*sin{300}){};

    \draw(11)--(16); \draw(11)--(12); \draw(12)--(13); \draw(13)--(14); \draw(15)--(14); \draw(15)--(16); \draw(12)--(17); \draw(13)--(17); \draw(21)--(26); \draw(21)--(22); \draw(22)--(23); \draw(23)--(24); \draw(25)--(24); \draw(25)--(26); \draw(21)--(11); \draw(21)--(12); \draw(21)--(16); \draw(22)--(12); \draw(22)--(11); \draw(22)--(13); \draw(23)--(13); \draw(23)--(12); \draw(23)--(14); \draw(24)--(13); \draw(24)--(14); \draw(24)--(15); \draw(25)--(14); \draw(25)--(15); \draw(25)--(16); \draw(26)--(15); \draw(26)--(16); \draw(26)--(11); \draw(22)--(17); \draw(23)--(17);    
\end{tikzpicture}
\caption{A graph $G$ (left) and a blowup $G^{'}$ of it (right).}
\label{fig:1}
\end{figure}
\textbf{\textit{Question 1}:} If every prime graph in $\mathcal{G}$ is recolorable, then is every graph in $\mathcal{G}$ recolorable? \\
We answered Question 1 positively for 2$K_2$-free graphs and diamond-free graphs, and would like to know if it is true in general.

In light of Theorem \ref{hereditary}, to answer Question 1 positively, one might check if every blowup of a prime recolorable graph is recolorable. However, this is not true.

It is easy to see that the graph $G$ in Figure \ref{fig:1} (left) is a prime 3-chromatic graph and it is recolorable, but the blowup $G^{'}$ of $G$ (right) is 5-chromatic and $R_6(G^{'})$ is disconnected and hence $G^{'}$ is not recolorable. However, we can see that $G$ has a prime induced subgraph, $C_6$, which is not recolorable.
\par
We conjecture the following:
\begin{conjecture}
Let $G$ be a prime graph. Every blowup of $G$ is recolorable if and only if every induced subgraph of $G$ is recolorable.
\end{conjecture}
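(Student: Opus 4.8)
The plan is to treat the two implications separately, but both funnel into one stubborn ``atomic'' question, which I expect to be the real obstacle.

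For the reverse direction (``every induced subgraph of $G$ recolorable $\Rightarrow$ every blowup of $G$ recolorable'') I would induct on $|V(G)|$ over prime graphs, using Theorem~\ref{hereditary} as the engine. Given a prime $G$ all of whose induced subgraphs are recolorable, let $\mathcal{G}$ be the hereditary class of all induced subgraphs of all blowups of $G$; equivalently, all blowups of all induced subgraphs of $G$. A non-trivial blowup always contains a non-trivial module, so (apart from the harmless $K_1,K_2,\overline{K_2}$, whose blowups are cliques or disjoint unions of cliques) the prime members of $\mathcal{G}$ are exactly the prime induced subgraphs of $G$. Every \emph{proper} prime induced subgraph $H$ of $G$ has all of its induced subgraphs recolorable (they are induced subgraphs of $G$), so by the induction hypothesis every blowup of $H$ is recolorable; thus, by Theorem~\ref{hereditary} applied inside the class of blowups of proper induced subgraphs of $G$, every graph in $\mathcal{G}$ that is not a blowup of $G$ itself is recolorable. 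Now take a vertex-minimal blowup $G'$ of $G$ that is not recolorable. Then every proper induced subgraph of $G'$ is recolorable (it is either a smaller blowup of $G$, handled by minimality, or a blowup of a proper induced subgraph of $G$, handled above); moreover $G'$ is neither a join nor a disjoint union (a prime graph on at least three vertices is connected and co-connected, and both properties pass to blowups), and the maximal modules of $G'$ are precisely its fibres. The trouble is that the clique skeleton of $G'$ is $G'$ itself, so Theorem~\ref{moddecomposition} is vacuous, and we are left to prove directly the crux statement: \emph{a blowup $G'$ of a prime graph $G$, all of whose proper induced subgraphs are recolorable and whose underlying prime graph $G$ has all induced subgraphs recolorable, is itself recolorable.} Since $G'$ has a fibre of size at least two, it contains a pair of true twins $u,w$ with $N[u]=N[w]$; the natural hope is a ``true-twins lemma'' in the spirit of Lemmas~\ref{independent} and~\ref{degree} --- that a graph with true twins $u,w$ is recolorable whenever $G'-u$ is --- which would close the induction. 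Proving (or circumventing) such a lemma is, I believe, the main difficulty: true twins can raise the chromatic number, so the low-degree reduction of Lemma~\ref{degree} does not apply, and one likely needs a genuinely new recoloring argument, perhaps tracking $\ell$-colorings of $G'$ as ``set-colorings'' of $G$ and exploiting recolorability of $G$.

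For the forward direction I would argue contrapositively: starting from an induced subgraph $H$ of $G$ with $R_{\ell_0}(H)$ disconnected for some $\ell_0\ge\chi(H)+1$, build a blowup $G'$ of $G$ with $R_{\ell'}(G')$ disconnected. The idea is to blow up the vertices of $V(G)\setminus V(H)$, and in particular the neighbourhood of $V(H)$, into cliques large enough that in every $\ell'$-coloring of $G'$ those cliques occupy a \emph{fixed} set of $\ell'-\ell_0$ colors; then the restriction to $V(H)$ lives in a palette of exactly $\ell_0$ colors, every recoloring step of $G'$ projects to a recoloring step of $H$ inside that palette, and the two extensions of $R_{\ell_0}(H)$-inequivalent colorings of $H$ land in different components of $R_{\ell'}(G')$. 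The obstacles are: (i) controlling $\chi(G')$ so the ``effective'' number of colors available to $H$ is exactly $\ell_0$ and not larger, and, more seriously, (ii) ruling out \emph{palette rotations} --- recoloring the blown-up cliques could shift which $\ell_0$ colors are available to $H$ and thereby reconnect the two colorings. Taming (ii) appears to require the local structure of $G$ around $H$ and may force a judicious choice of $H$ (say a vertex-minimal, or prime, non-recolorable induced subgraph) together with the primality of $G$.

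In summary, both halves of the conjecture reduce to the single question left open by the modular-decomposition toolkit of this paper: must a blowup of a prime graph be recolorable once all of its proper induced subgraphs are? Settling this --- via a true-twins recoloring lemma or a direct set-coloring argument --- is the crux; the blowup construction for the forward direction is a secondary but still delicate point.
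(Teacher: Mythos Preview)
The statement you are treating is labelled a \emph{Conjecture} in the paper and is explicitly left open; the authors offer only the motivating example of Figure~\ref{fig:1} and no argument for either implication. There is therefore no proof in the paper to compare your proposal against.

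Your proposal is not a proof either, and you are candid about this. For the reverse implication your reduction is sound as far as it goes: the identification of the prime members of the blowup class, the induction via Theorem~\ref{hereditary} handling blowups of proper prime induced subgraphs, the observation that a blowup of a prime graph on at least three vertices is connected and co-connected with maximal modules equal to its fibres, and the consequence that the clique skeleton of such a blowup is itself so that Theorem~\ref{moddecomposition} is vacuous --- all of this is correct. But it leaves exactly the gap you name, a ``true-twins'' recoloring lemma, which you do not supply. For the forward implication your blowup-the-complement idea is natural, but the palette-rotation obstacle you flag is real and unresolved. In short, you have reduced both directions to precisely the open question the authors themselves stop at; nothing is wrong with the framework, but the conjecture remains a conjecture.
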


\section*{Acknowledgement}
We thank Ch\'inh~T.~Ho\`ang for helpful suggestions. Some of the work in this paper appeared in \cite{Belavadi2024}.

\end{document}